\numberwithin{equation}{section}
\theoremstyle{plain}
\newtheorem{thm}{Theorem}[section]
\newtheorem{prop}[thm]{Proposition}
\newtheorem{lemma}[thm]{Lemma}
\newtheorem{cor}[thm]{Corollary}
\theoremstyle{definition}
\newtheorem{remark}[thm]{Remark}
\newtheorem{example}[thm]{Example}
\newenvironment{MSC}{%
\smallbreak
\noindent \textbf{2010\ Mathematics Subject Classification\,:}}
\newenvironment{keywords}{%
\noindent\textbf{Key words and phrases\,:}\itshape}
\newcommand{\hg}[4]{{\,{}_2F_1\Big(\begin{array}{c} #1, #2\\ #3 \end{array} \Bigl\vert \,#4 \Big)}}
\newcommand{\adots}{\raisebox{-2pt}{$\iddots$}} 
\newcommand{\svdots}{\raisebox{-2pt}{$\vdots$}} 
\newcommand{\vsmall}{\rotatebox[origin=c]{-90}{$<$}}
\definecolor{gray}{RGB}{230,230,230}  
\definecolor{darkgray}{RGB}{130,130,130}  
\newcommand{\Z}{{\mathbb Z}}
\newcommand{\N}{{\mathbb N}}
\newcommand{\Q}{{\mathbb Q}}
\newcommand{\R}{{\mathbb R}}
\newcommand{\kk}{{\bf k}}
\DeclareMathOperator{\SSYT}{\operatorname{SSYT}}
\DeclareMathOperator{\Cor}{\operatorname{C}}
\title{Checkerboard style Schur multiple zeta values \\ and odd single zeta values}
\author{Henrik Bachmann and 
Yoshinori Yamasaki\thanks{Partially supported by JSPS Grant-in-Aid for Scientific Research (C) No. 15K04785.}} 
\date{\today}
\begin{document}

\setlength{\baselineskip}{13pt}
\maketitle

 \begin{abstract} 
We give explicit formulas for the recently introduced Schur multiple zeta values, which generalize multiple zeta(-star) values and which assign to a Young tableaux a real number. In this note we consider Young tableaux of various shapes, filled with alternating entries like a Checkerboard. In particular we obtain new sum representation for odd single zeta values in terms of these Schur multiple zeta values. As a special case we show that some Schur multiple zeta values of Checkerboard style, filled with $1$ and $3$, are given by determinants of matrices with odd single zeta values as entries. \\

 \begin{MSC}
  11M41, 
  05E05, 
  33C05. 
 \end{MSC} 
 \begin{keywords}
  Multiple zeta values, 
  Schur functions,
  Jacobi-Trudi formula,
  Hypergeometric functions,
  Hankel determinants
 \end{keywords}
 \end{abstract}



\section{Introduction}

The purpose of this paper is to give explicit formulas for special values of a certain class of Schur multiple zeta values. Schur multiple zeta values, which were recently introduced in \cite{NakasujiPhuksuwanYamasaki}, generalize the classical multiple zeta(-star) values. For $k_1,\dots,k_{r-1} \geq 1, k_r \geq 2$ the multiple zeta value and the multiple zeta-star value are defined by
\begin{equation} \label{eq:mzv}
 \zeta(k_1,\ldots,k_r)=\sum_{0<m_1<\cdots<m_r} \frac{1}{m_1^{k_1}\cdots m_r^{k_r}} \,,\quad
 \zeta^\star(k_1,\ldots,k_r)=\sum_{0<m_1\leq\cdots \leq m_r} \frac{1}{m_1^{k_1}\cdots m_r^{k_r}} \,.
\end{equation}
There are various results on special values of these real numbers. Denote by $\{k_1,\dots,k_r\}^n$ the index set which consists of $n$ repetitions of $k_1,\dots,k_r$. Then it was first proven in \cite{BBB} that for all $n\geq 1$
\begin{equation}\label{eq:31formula}
\zeta(\{1,3\}^n) = \frac{2\pi^{4n}}{(4n+2)!} = \frac{1}{4^n} \zeta(\{4\}^n)\,.
\end{equation}
In this note we are interested in analogues formulas for Schur multiple zeta values. Schur multiple zeta values generalize multiple zeta values by replacing an index set $(k_1,\dots,k_r)$ by a Young tableau.  For example for numbers $a,b,d\geq 1,\,c,e,f \geq 2$ the following sum is an example for a Schur multiple zeta value
\begin{equation} \label{eq:3stair}
\zeta\left(\ {\footnotesize \ytableausetup{centertableaux, boxsize=1.2em}
	\begin{ytableau}
	a & b & c \\
	d & e\\
	f
	\end{ytableau}}\ \right) = \sum_{
	\arraycolsep=1.4pt\def\arraystretch{0.8}
	\begin{array}{cccc}
m_a & \leq m_b & \leq m_c \\
\vsmall & \,\,\,\,\,\vsmall \\
 m_d & \leq m_e\, & \, \\
\vsmall & \\
 m_f & \, & \, 
	\end{array} } \frac{1}{m_a^a \cdot m_b^b \cdot m_c^c \cdot m_d^d \cdot m_e^e \cdot m_f^f}  \,,
\end{equation}
where we always assume that the $m_\ast$ appearing in the summation are positive integers.
We will not just consider full Young tableaux but also their skew-type, i.e. where one allows to subtract another Young tableau from the upper left corner. For example the following sum is an example of a Schur multiple zeta value for a skew Young tableau
\begin{equation*} \label{eq:skew}
\zeta\left(\ {\footnotesize \ytableausetup{centertableaux, boxsize=1.2em}
	\begin{ytableau}
	\none & b & c \\
	d & e\\
	f
	\end{ytableau}}\ \right) = \sum_{
	\arraycolsep=1.4pt\def\arraystretch{0.8}
	\begin{array}{cccc}
	  & \,\,\,\, m_b & \leq m_c \\
       & \,\,\,\,\,\vsmall \\
	  m_d & \leq m_e\, & \, \\
	  \vsmall & \\
	  m_f & \, & \, 
	\end{array} } \frac{1}{m_b^b \cdot m_c^c \cdot m_d^d \cdot m_e^e \cdot m_f^f}  \,.
\end{equation*}
 First notice that the multiple zeta(-star) values in \eqref{eq:mzv} are given by Schur multiple zeta values of a Young tableau with a single column and row, respectively, i.e.,
\begin{equation}\label{eq:columrow}
\zeta(k_1,\dots,k_r) = \zeta\!\left(\ {\footnotesize \ytableausetup{centertableaux, boxsize=1.2em}
	\begin{ytableau}
	k_1  \\
	\svdots \\
	k_r
	\end{ytableau}}\ \right) \quad \text{ and }\qquad \zeta^\star(k_1,\dots,k_r) =\zeta\!\left(\ {\footnotesize \ytableausetup{centertableaux, boxsize=1.2em}
	\begin{ytableau}
	k_1 & \cdots & k_r
	\end{ytableau}}\ \right) \,.  
\end{equation}
 
In this paper we will give explicit formulas for Schur multiple zeta values of Checkerboard style, by which we mean that the entries of the Young tableaux have two alternating entries. The formula in \eqref{eq:31formula} is an example for such an identity of Checkerboard style Schur multiple zeta values, since it can be written as
 \[ \ytableausetup{mathmode,boxsize=1.2em,aligntableaux=center} 
\zeta
 \left(\ {\footnotesize
 \begin{ytableau}
 1        \\
 *(gray)3 \\
 \svdots    \\
 1        \\
 *(gray)3 
 \end{ytableau}}
 \ \right) = \frac{1}{4^n} \zeta(\{4\}^n) \,.\]
Here and in the following the coloring is just for optical reasons and $n$ denotes the number of blocks $\ytableausetup{mathmode,boxsize=0.9em,aligntableaux=center} {\scriptsize
  \begin{ytableau}
  1        \\
   *(gray)3 
  \end{ytableau}}\,$. As we will see in Theorem \ref{thm:SSstar},
 a similar formula holds for the following skew Young tableaux
 \[ \ytableausetup{mathmode,boxsize=1.2em,aligntableaux=center} 
 \zeta
 \left(\ {\footnotesize
 \begin{ytableau}
 \none    & \none  & 1        \\
 \none    & \adots & *(gray)3 \\
 1        & \adots \\
 *(gray)3 
 \end{ytableau}}
 \ \right) = \frac{1}{4^n} \zeta^\star(\{4\}^n) \,.\]
More surprisingly we will see  in Theorem~\ref{thm:AB13} that, after adding a $1$ on the bottom left or a $3$ on the top right, we can obtain all odd single zeta values by these Schur multiple zeta values:
\begin{equation} \label{eq:oddsums}
\ytableausetup{mathmode,boxsize=1.2em,aligntableaux=center} 
\zeta
\left(\ {\footnotesize
\begin{ytableau}
 \none & \none    & \none  & 1        \\
 \none & \none    & \adots & *(gray)3 \\
 \none & 1        & \adots \\
 1     & *(gray)3  
\end{ytableau}}
\ \right)  = \frac{2}{4^n} \zeta(4n+1) ,\qquad
\zeta\left(
\ {\footnotesize
\begin{ytableau}
 \none  & \none & 1 & *(gray) 3 \\
 \none  & \adots& *(gray) 3 \\
 1 & \adots \\
 *(gray) 3
\end{ytableau}}
\
\right) = \frac{1}{4^{n}} \zeta(4n+3)\,.
\end{equation}
These formulas are valid for all $n\geq 1$ and $n\geq 0$ respectively. In particular this gives new sum representation for odd single zeta values. Using \eqref{eq:oddsums} we will give explicit expressions for various classes of Schur multiple zeta values of Checkerboard style. For example we will show (as a special case of Corollary \ref{cor:31case}) that the value \eqref{eq:3stair} with alternating entries in $1$ and $3$ is given by a Hankel determinant of odd single zeta values:
\begin{align*}
\ytableausetup{mathmode,boxsize=1.2em,aligntableaux=center} 
\zeta\left(\ {\footnotesize
				\begin{ytableau}
				*(gray)3 & 1 & *(gray)3\\
				1 & *(gray)3\\
				*(gray)3
				\end{ytableau}}\ \right) 
					= \frac{1}{4^2}  \begin{vmatrix}
					\zeta(3) & \zeta(7)\\
					\zeta(7) & \zeta(11)
					\end{vmatrix} \,.
\end{align*}
In general we will give formulas for these type of Schur multiple zeta values and their skew type versions. For example the higher cases of above formulas are given by
\begin{align*}
\ytableausetup{mathmode,boxsize=1.2em,aligntableaux=center} 
\zeta\left(\ {\footnotesize 
		\begin{ytableau}
		1 & *(gray)3 & 1 & *(gray)3\\
		*(gray)3 & 1 & *(gray)3\\
		1 & *(gray)3\\
		*(gray)3
		\end{ytableau}}\ \right) 
			&= \frac{1}{4^4} \begin{vmatrix}
			\zeta(7) & \zeta(11)\\
			\zeta(11) & \zeta(15) 
			\end{vmatrix}
		 \,,\quad
		&&\zeta\left(\ {\footnotesize 
		\begin{ytableau}
		*(gray)3 & 1 & *(gray)3 & 1 & *(gray)3\\
		1 & *(gray)3 & 1 & *(gray)3\\
		*(gray)3 & 1 & *(gray)3\\
		1 & *(gray)3\\
		*(gray)3
		\end{ytableau}}\ \right) 
		= \frac{1}{4^6}  \begin{vmatrix}
		\zeta(3) & \zeta(7) & \zeta(11)\\
		\zeta(7) & \zeta(11) & \zeta(15) \\
		\zeta(11) &  \zeta(15) & \zeta(19)
		\end{vmatrix}   \,,
\end{align*}
from which the reader should already be able to guess the general form. Most of our results will be proven for Checkerboard style Schur multiple zeta values with arbitrary entries $a,b$. But because of \eqref{eq:oddsums} the case $(a,b)=(1,3)$ will always give even more explicit formulas. 

\section{Schur multiple zeta values}

\subsection{Notations}
 Throughout the present paper, the set of all positive integers is denoted by $\N = \{1,2,3,\dots\}$.

 A partition of $n\in\N$ is a tuple $\lambda = (\lambda_1,\dots,\lambda_h)$ of positive integers
 $\lambda_1 \geq \dots \geq \lambda_h \geq 1$ with $n = |\lambda|= \lambda_1 + \dots + \lambda_h$.
 For another partition $\mu=(\mu_1,\dots,\mu_r)$ we write $\mu \subset \lambda$ if $r\leq h$ and $\mu_j <\lambda_j$ for $j=1,\dots,r$.  
 For partitions $\lambda,\mu$ with $\mu \subset \lambda$ we identify the pair $\lambda\slash\mu=(\lambda,\mu)$ with its (skew) Young diagram
	 	\[D(\lambda \slash \mu) = \left\{(i,j) \in \Z^2 \mid 1 \leq i \leq h\,, \mu_i < j \leq \lambda_i \right\},\]
 where we set $\mu_j = 0$ for $j>r$.
 In the case where $\mu=\emptyset$ is the empty partition we just write
 $\lambda\slash\mu = \lambda$.
 An entry $(i,j)\in D(\lambda\slash\mu)$ is called a corner of $\lambda \slash \mu$
 if $ (i,j+1) \not\in D(\lambda\slash\mu)$ and $ (i+1,j) \not\in D(\lambda\slash\mu)$.
 We denote the set of all corners of $\lambda \slash \mu$ by $C(\lambda \slash \mu)$.
For example when $\lambda\slash\mu=(5,4,3)\slash (3,1)$ we have $\Cor(\lambda\slash\mu) = \{(1,5),(2,4),(3,3)\}$, which we visualize as $\bullet$ in the corresponding Young diagram:
\[{\footnotesize 	\ytableausetup{centertableaux, boxsize=1.2em}\begin{ytableau}
	\none & \none & \none & \, & \bullet \\
	\none & \, & \, & \bullet \\
	\, & \, & \bullet 
	\end{ytableau}}\]

 The conjugate of $\lambda\slash\mu$ is the pair $\lambda^{\prime}\slash\mu^{\prime}$ with  $\lambda^\prime=(\lambda^\prime_1,\dots,\lambda^\prime_{\lambda_1})$ and 
$\mu^\prime=(\mu^\prime_1,\dots,\mu^\prime_{\mu_1})$ whose Young diagram is the transpose of that of $\lambda\slash\mu$.
 For example when $\lambda\slash\mu=(5,4,3)\slash(3,1)$
 we have $\lambda^\prime\slash\mu^{\prime}=(3,3,3,2,1)\slash(2,1,1)$,
 which is visualized by  
	\[\lambda\slash\mu \,\, = \,\, {\footnotesize \begin{ytableau}
		\none & \none & \none & \,& \,\\
		\none & \,& \,& \,\\
		\,& \,& 
		\end{ytableau}}
  \,\, \longrightarrow \,\, \lambda^\prime\slash\mu^\prime \,\, = \,\, {\footnotesize \ytableausetup{centertableaux, boxsize=1.2em}
        \begin{ytableau}
		\none & \none & \, \\
		\none & \,& \, \\
     	\none & \, & \, \\
		\,& \,\\
		\,
		\end{ytableau}} \,. \] 

 A (skew) Young tableau $\kk =  (k_{i,j})_{(i,j) \in D(\lambda \slash \mu)}$ of shape $\lambda \slash \mu$ is a filling of $D(\lambda\slash\mu)$ obtained by putting $k_{i,j}\in\N$ into the $(i,j)$-entry of $D(\lambda\slash\mu)$. For shorter notation we will also just write $(k_{i,j})$ in the following if the shape $\lambda \slash \mu$ is clear from the context. 
For example when $\lambda\slash\mu=(5,4,3)\slash (3,1)$ we visualize this Young tableau by
 \[\kk \,\, = \,\, (k_{i,j}) \,\, = \,\, {\footnotesize \ytableausetup{centertableaux, boxsize=1.8em}
\begin{ytableau}
 \none & \none & \none & k_{1,4} & k_{1,5} \\
 \none & k_{2,2} & k_{2,3} & k_{2,4} \\
 k_{3,1} & k_{3,2} & k_{3,3} 
\end{ytableau}}\,. \]
A Young tableau $(m_{i,j})$ is called semi-standard if $m_{i,j}<m_{i+1,j}$ and $m_{i,j}\leq m_{i,j+1}$ for all $i$ and $j$.
The set of all Young tableaux and all semi-standard Young tableaux of shape $\lambda \slash \mu$ are denoted by $T(\lambda \slash \mu)$ and 
$\SSYT(\lambda \slash \mu)$, respectively.

\subsection{Schur multiple zeta values}

We call a Young tableau $\kk = (k_{i,j}) \in T(\lambda \slash \mu)$ admissible if $k_{i,j} \geq 2$ for $(i,j) \in \Cor(\lambda\slash\mu)$.
For an admissible $\kk = (k_{i,j}) \in T(\lambda \slash \mu)$
the Schur multiple zeta value is defined by  
\[ \zeta(\kk) = \sum_{(m_{i,j}) \in \SSYT(\lambda \slash \mu)} \prod_{(i,j) \in D(\lambda \slash \mu)}  \frac{1}{m_{i,j}^{k_{i,j}}} \,. \]
We notice that the condition $k_{i,j} \geq 2$ for the corners ensures the convergence of the above series 
(see \cite[Lemma~2.1]{NakasujiPhuksuwanYamasaki}). By definition it is clear that these numbers generalize multiple zeta and zeta-star values as seen in \eqref{eq:columrow}.
The product of two multiple zeta values can be expressed by using the so called harmonic product formula, which in the lowest depth is given by $\zeta(a) \zeta(b) = \zeta(a,b) + \zeta(b,a) + \zeta(a+b)$. Using the notion of Schur multiple zeta values, this can be expressed even nicer as
\[\zeta\!\left(\ \footnotesize \ytableausetup{centertableaux, boxsize=1.2em}
	\begin{ytableau}
a
	\end{ytableau}\ \right) \zeta\!\left(\ {\footnotesize \ytableausetup{centertableaux, boxsize=1.2em}
		\begin{ytableau}
b
		\end{ytableau}}\ \right)  = \zeta\!\left(\ {\footnotesize \ytableausetup{centertableaux, boxsize=1.2em}
			\begin{ytableau}
		a & b
			\end{ytableau}}\ \right) + \zeta\!\left(\ {\footnotesize \ytableausetup{centertableaux, boxsize=1.2em}
				\begin{ytableau}
			b\\a
				\end{ytableau}}\ \right)  \,.\]
In the following we want to explain this harmonic product for arbitrary Schur multiple zeta values. 
For $\lambda = (\lambda_1,\dots,\lambda_h)$ and $\mu \subset \lambda$ we call $(1,\lambda_1) \in D(\lambda\slash\mu)$ the \emph{top-right entry} and $(h,1) \in D(\lambda\slash\mu)$ the \emph{bottom-left entry} of $\lambda\slash\mu$.
For example when $\lambda\slash\mu=(5,4,3)\slash (3,1)$ 
the top-right entry is $(1,5)$ and the bottom-left entry is $(3,1)$, which are respectively visualized as $\mathsf{tr}$ and $\mathsf{bl}$ in the corresponding Young diagram:\[{\footnotesize 	\ytableausetup{centertableaux, boxsize=1.2em}\begin{ytableau}
	\none & \none & \none & \, & \mathsf{tr} \\
	\none & \, & \, & \, \\
	\mathsf{bl} & \, & \, 
	\end{ytableau}}\]
For partitions $\lambda^{1},\lambda^{2},\mu^{1},\mu^{2}$ with $\mu^{1}\subset \lambda^1$ and $\mu^{2}\subset \lambda^2$,
define the horizontal gluing $\lambda^1\slash\mu^1\oplus_{\mathrm{h}} \lambda^2\slash\mu^2$ of $\lambda^1\slash\mu^1$ and $\lambda^2\slash\mu^2$ 
by putting $\lambda^2\slash\mu^2$ next to $\lambda^1\slash\mu^1$ such that the bottom-left entry of $\lambda^2\slash\mu^2$ is on the right of the top-right entry of $\lambda^1\slash\mu^1$.
Moreover, 
define the vertical gluing $\lambda^1\slash\mu^1\oplus_{\mathrm{v}} \lambda^2\slash\mu^2$ of $\lambda^1\slash\mu^1$ and $\lambda^2\slash\mu^2$
by putting $\lambda^2\slash\mu^2$ next to $\lambda^1\slash\mu^1$ such that the bottom-left entry of $\lambda^2\slash\mu^2$ is on top of the top-right entry of $\lambda^1\slash\mu^1$.
Analogously we define for Young diagrams $\kk_1\in T(\lambda^1\slash\mu^1)$ and $\kk_2\in T(\lambda^2\slash\mu^2)$ the Young diagrams $\kk_1\oplus_{\mathrm{h}}\kk_2\in T(\lambda^1\slash\mu^1\oplus_{\mathrm{h}} \lambda^2\slash\mu^2)$
and $\kk_1\oplus_{\mathrm{v}}\kk_2\in T(\lambda^1\slash\mu^1\oplus_{\mathrm{v}} \lambda^2\slash\mu^2)$.

\begin{example} For $\lambda^1\slash\mu^1=(5,4,3)\slash (3,1)$ and $\lambda^2\slash\mu^2=(3,3,3,2,1)\slash (2,1,1)$ the horizontal gluing is given by 
\begin{align*}
{\footnotesize 	\ytableausetup{centertableaux, boxsize=1.7em}\begin{ytableau}
	\none & \none & \none  & k_{1,4} & k_{1,5} \\
	\none & k_{2,2} & k_{2,3} & k_{2,4} \\
	k_{3,1} & k_{3,2} & k_{3,3} 
	\end{ytableau}}
	\ \oplus_{\mathrm{h}} \,%
{\footnotesize 	\ytableausetup{centertableaux, boxsize=1.7em}\begin{ytableau}
	\none & \none & *(gray)l_{1,3} \\
	\none & *(gray)l_{2,2} & *(gray)l_{2,3} \\
	\none & *(gray)l_{3,2} & *(gray)l_{3,3} \\
	*(gray)l_{4,1} & *(gray)l_{4,2} \\
	*(gray)l_{5,1}
	\end{ytableau}}
\ & \,\, = \,\, 
{\footnotesize 	\ytableausetup{centertableaux, boxsize=1.7em}\begin{ytableau}
	\none & \none & \none & \none & \none & \none & \none & *(gray)l_{1,3} \\
	\none & \none & \none & \none & \none & \none & *(gray)l_{2,2} & *(gray)l_{2,3} \\
	\none & \none & \none & \none & \none & \none & *(gray)l_{3,2} & *(gray)l_{3,3} \\
	\none & \none & \none & \none & \none & *(gray)l_{4,1} & *(gray)l_{4,2} \\
	\none & \none & \none  & k_{1,4} & k_{1,5} & *(gray)l_{5,1} \\
	\none & k_{2,2} & k_{2,3} & k_{2,4} \\
	k_{3,1} & k_{3,2} & k_{3,3} 
	\end{ytableau}}
\end{align*}
and the vertical gluing is given by
\begin{align*}
{\footnotesize 	\ytableausetup{centertableaux, boxsize=1.7em}\begin{ytableau}
	\none & \none & \none  & k_{1,4} & k_{1,5} \\
	\none & k_{2,2} & k_{2,3} & k_{2,4} \\
	k_{3,1} & k_{3,2} & k_{3,3} 
	\end{ytableau}}
	\ \oplus_{\mathrm{v}} \,%
{\footnotesize 	\ytableausetup{centertableaux, boxsize=1.7em}\begin{ytableau}
	\none & \none & *(gray)l_{1,3} \\
	\none & *(gray)l_{2,2} & *(gray)l_{2,3} \\
	\none & *(gray)l_{3,2} & *(gray)l_{3,3} \\
	*(gray)l_{4,1} & *(gray)l_{4,2} \\
	*(gray)l_{5,1}
	\end{ytableau}}
\ & \,\, = \,\, 
{\footnotesize 	\ytableausetup{centertableaux, boxsize=1.7em}\begin{ytableau}
	\none & \none & \none & \none & \none & \none & *(gray)l_{1,3} \\
	\none & \none & \none & \none & \none & *(gray)l_{2,2} & *(gray)l_{2,3} \\
	\none & \none & \none & \none & \none & *(gray)l_{3,2} & *(gray)l_{3,3} \\
	\none & \none & \none & \none & *(gray)l_{4,1} & *(gray)l_{4,2} \\
	\none & \none & \none & \none & *(gray)l_{5,1} \\
	\none & \none & \none  & k_{1,4} & k_{1,5} \\
	\none & k_{2,2} & k_{2,3} & k_{2,4} \\
	k_{3,1} & k_{3,2} & k_{3,3} 
	\end{ytableau}}\,.	
\end{align*}
\end{example}
 Now the following is immediately from the definition of the Schur multiple zeta values.

\begin{lemma}(Harmonic product)
Let $\kk_1\in T(\lambda^1\slash\mu^1)$ and $\kk_2\in T(\lambda^2\slash\mu^2)$ be admissible indices.
Then both $\kk_1\oplus_{\mathrm{h}}\kk_2$ and $\kk_1\oplus_{\mathrm{v}}\kk_2$ are admissible and   
\begin{equation}
\label{for:harmonicproduct}
\zeta(\kk_1)\zeta(\kk_2)=\zeta(\kk_1\oplus_{\mathrm{h}}\kk_2)+\zeta(\kk_1\oplus_{\mathrm{v}}\kk_2) \,.
\end{equation}
\end{lemma}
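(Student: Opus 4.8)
The plan is to deduce the statement directly from the definition of the Schur multiple zeta value, as the lemma asserts, by repackaging the double sum that arises when one multiplies the two defining series. I will write $\kk_1=(k^{(1)}_{i,j})$ and $\kk_2=(k^{(2)}_{i,j})$, and throughout I denote by $\mathsf{tr}$ the top-right entry of $\lambda^1\slash\mu^1$ and by $\mathsf{bl}$ the bottom-left entry of $\lambda^2\slash\mu^2$, which are the two cells brought into contact by either gluing.

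First I would settle admissibility. The crucial observation is that in both $\oplus_{\mathrm{h}}$ and $\oplus_{\mathrm{v}}$ the two diagrams meet in a single place, where $\mathsf{tr}$ is adjacent to $\mathsf{bl}$, and that embedding each piece into the glued diagram changes the local neighbourhood of no cell other than these two. In the horizontal case $\mathsf{tr}$ acquires a right-neighbour (namely $\mathsf{bl}$) and so ceases to be a corner, whereas $\mathsf{bl}$ only gains a left-neighbour, which is irrelevant to its corner status; no cell acquires a new right- or below-neighbour. Hence $\Cor\big(\lambda^1\slash\mu^1\oplus_{\mathrm{h}}\lambda^2\slash\mu^2\big)\subseteq \Cor(\lambda^1\slash\mu^1)\cup\Cor(\lambda^2\slash\mu^2)$, and the analogous inclusion holds for $\oplus_{\mathrm{v}}$; since $\kk_1$ and $\kk_2$ are admissible, every corner of a glued diagram carries an entry $\geq 2$, so the glued tableaux are admissible.

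For the identity itself, I would expand $\zeta(\kk_1)\zeta(\kk_2)$ as the product of its two defining series. Because every summand is positive, this product equals the absolutely convergent double sum, over independent pairs $\big((m_{i,j}),(n_{i,j})\big)\in\SSYT(\lambda^1\slash\mu^1)\times\SSYT(\lambda^2\slash\mu^2)$, of $\prod_{(i,j)}m_{i,j}^{-k^{(1)}_{i,j}}\cdot\prod_{(i,j)}n_{i,j}^{-k^{(2)}_{i,j}}$, and positivity licenses any regrouping. The key step is to match such pairs with semi-standard fillings of the glued diagrams. Restricting a filling of $\lambda^1\slash\mu^1\oplus_{\mathrm{h}}\lambda^2\slash\mu^2$ to each piece yields a semi-standard filling of each piece, and the only semi-standardness relation linking the two pieces is the weak one along the shared row, namely $m_{\mathsf{tr}}\leq n_{\mathsf{bl}}$; conversely any pair obeying $m_{\mathsf{tr}}\leq n_{\mathsf{bl}}$ assembles uniquely into such a filling. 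Exactly the same reasoning identifies $\SSYT$ of the vertical gluing with pairs satisfying the strict inequality $n_{\mathsf{bl}}<m_{\mathsf{tr}}$ coming from the strictly increasing column condition in the shared column.

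To finish, I would observe that each pair satisfies precisely one of $m_{\mathsf{tr}}\leq n_{\mathsf{bl}}$ or $m_{\mathsf{tr}}>n_{\mathsf{bl}}$, so the horizontal and vertical families partition $\SSYT(\lambda^1\slash\mu^1)\times\SSYT(\lambda^2\slash\mu^2)$; since the monomial attached to a glued filling is exactly the product of the monomials of its two restrictions, summing over the two families yields $\zeta(\kk_1\oplus_{\mathrm{h}}\kk_2)+\zeta(\kk_1\oplus_{\mathrm{v}}\kk_2)$, which is \eqref{for:harmonicproduct}. I expect the only genuinely delicate point to be the verification that the diagonal placement makes the two pieces interact through this single contact inequality and through nothing else — that no further row- or column-adjacency between the pieces is created — since this is precisely what produces the clean weak/strict dichotomy at $(\mathsf{tr},\mathsf{bl})$ and hence the two-term right-hand side.
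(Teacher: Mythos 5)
Your proposal is correct: the paper gives no written proof (it states the lemma is ``immediately from the definition''), and your argument is precisely the intended one --- the diagonal placement makes $\mathsf{tr}$ and $\mathsf{bl}$ the unique pair of adjacent cells from different pieces, so pairs of fillings split according to $m_{\mathsf{tr}}\leq n_{\mathsf{bl}}$ (horizontal) versus $n_{\mathsf{bl}}<m_{\mathsf{tr}}$ (vertical), and positivity justifies the regrouping. Your corner analysis for admissibility is also sound (modulo the harmless slip that ``no cell acquires a new right- or below-neighbour'' should read ``no \emph{other} cell'').
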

Let $T^{\mathrm{diag}}(\lambda/\mu)$ be the subset of $T(\lambda/\mu)$
consisting of all Young tableaux $\kk=(k_{i,j})$ satisfying $k_{i,j} = k_{i^{\prime},j^{\prime}}$ whenever $j-i=j^{\prime}-i^{\prime}$,
which means that $\kk$ has the same entries on each diagonal.
The following result from \cite{NakasujiPhuksuwanYamasaki} states, that the Schur multiple zeta value $\zeta(\kk)$ for $\kk\in T^{\mathrm{diag}}(\lambda/\mu)$ can be written as a determinant of a matrix whose entries are multiple zeta-(star) values.
\begin{thm}[{\cite[Theorem~4.3]{NakasujiPhuksuwanYamasaki}}] \label{thm:JT}
Let $\lambda = (\lambda_1,\dots,\lambda_h)$ and $\mu=(\mu_1,\dots,\mu_r)$ be partitions with $\mu\subset\lambda$ and 
$\kk = (k_{i,j}) \in T^{\mathrm{diag}}(\lambda \slash \mu)$ an admissible Young tableau.
Write $d_m=k_{i,i+m}$ for $m\in \Z$.
\begin{enumerate}[i)]
\item If the last entry of every row in $\kk$ is strictly larger than $1$, then we have
\begin{equation}\label{for:JTzetastar}
\zeta(\kk) = \det\left( \zeta^\star(d_{\mu_j-j+1} , d_{\mu_j-j+2}, \dots, d_{\mu_j-j+(\lambda_i - \mu_j - i +j)}) \right)_{1\leq i, j \leq h} \,, 
\end{equation}
where we set $\zeta^\star(\,\cdots)=1$ if $\lambda_i - \mu_j - i + j =0$ and $\zeta^\star(\,\cdots)=0$ if $\lambda_i - \mu_j - i + j <0$. 
\item If the last entry of every column in $\kk$ is strictly larger than $1$, then we have
\begin{equation}\label{for:JTzeta}
\zeta(\kk) = \det\left( \zeta(d_{-\mu'_j+j-1} , d_{-\mu'_j+j-2}, \dots, d_{-\mu'_j+j-(\lambda'_i - \mu'_j - i +j)}) \right)_{1\leq i, j \leq \lambda_1} \,,
\end{equation}
where we set $\zeta(\,\cdots)=1$ if $\lambda'_i - \mu'_j - i +j =0$ and $\zeta(\,\cdots)=0$ if $\lambda'_i - \mu'_j - i +j <0$. 
\end{enumerate}
\end{thm}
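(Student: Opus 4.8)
The plan is to prove both identities by the Lindström--Gessel--Viennot (LGV) lemma, which is the standard route to Jacobi--Trudi type formulas; the only extra ingredient here is the bookkeeping of the exponents $k_{i,j}$, and the hypothesis $\kk\in T^{\mathrm{diag}}(\lambda/\mu)$ is precisely what makes that bookkeeping work. I would treat part i) in detail and obtain part ii) by the conjugate construction. The starting point is the combinatorial definition of $\zeta(\kk)$ together with the observation, recorded in \eqref{eq:columrow}, that a single row yields a zeta-star value and a single column an ordinary zeta value.

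For part i) I would first recall the classical bijection between $\SSYT(\lambda/\mu)$ and families of $h$ pairwise non-intersecting lattice paths built from unit north and east steps, in which the $i$-th path records the $i$-th row of the tableau: a horizontal step at height $v$ encodes a cell with entry equal to $v$, so that weak increase along rows and strict increase down columns translate exactly into the non-crossing condition. The sources and sinks are placed so that a single path from source $j$ to sink $i$ has horizontal length $\lambda_i-\mu_j-i+j$ and sweeps exactly the diagonals $\mu_j-j+1,\dots,\lambda_i-i$. I would then assign to a horizontal step lying on the diagonal $m=j-i$ at height $v$ the weight $v^{-d_m}$; by construction the total weight of a non-intersecting family equals the summand $\prod 1/m_{i,j}^{k_{i,j}}$ of $\zeta(\kk)$, so $\zeta(\kk)$ is the sum of these weights over all non-intersecting families.

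The crux is the computation of the $(i,j)$-entry of the LGV matrix, namely the generating function of all single paths from source $j$ to sink $i$. Because $\kk$ is constant on diagonals, the exponents seen along such a path depend only on its endpoints and not on the ambient family: summing $v_1^{-d_{\mu_j-j+1}}\cdots v_\ell^{-d_{\lambda_i-i}}$ over all weakly increasing $v_1\le\cdots\le v_\ell$, with $\ell=\lambda_i-\mu_j-i+j$, yields precisely $\zeta^\star(d_{\mu_j-j+1},\dots,d_{\lambda_i-i})$, together with the degenerate conventions ($=1$ when $\ell=0$, $=0$ when $\ell<0$) stated in the theorem. This is exactly where diagonal-constancy is indispensable: for a general admissible $\kk$ the single-path weight would depend on the particular column used, and no clean determinant would result. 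Applying the LGV lemma then converts the signed sum over all (possibly intersecting) path families into the determinant of this matrix, the sign-reversing involution on intersecting families cancelling everything except the non-intersecting ones; this gives \eqref{for:JTzetastar}.

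For part ii) I would run the identical argument on the conjugate diagram $\lambda'/\mu'$, now letting the paths record the columns of $\kk$. Since columns are \emph{strictly} increasing, a single vertical path produces the ordinary multiple zeta value $\zeta(d_{-\mu'_j+j-1},\dots,d_{-\lambda'_i+i})$ rather than a zeta-star value, and the diagonals are swept in decreasing order, which accounts for the index appearing in \eqref{for:JTzeta}; LGV again delivers the determinant. The main obstacle, in both parts, is analytic rather than combinatorial: one must justify the rearrangements (summing over $\SSYT$, regrouping into path families, and factoring the signed family-sum into the product of single-path sums in the determinant expansion) by absolute convergence. The admissibility condition $k_{i,j}\ge 2$ on $\Cor(\lambda/\mu)$ guarantees convergence of $\zeta(\kk)$, while the hypotheses of i) and ii) force the last entry $d_{\lambda_i-i}$ of each row, respectively the bottom entry $d_{i-\lambda'_i}$ of each column, to be at least $2$, so that every matrix entry is a convergent zeta-star, respectively zeta, value; securing absolute convergence uniformly enough to license all these interchanges is the step requiring the most care.
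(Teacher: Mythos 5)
Your proposal cannot be checked against this paper's own argument, because the paper does not prove Theorem \ref{thm:JT}: it is quoted from \cite[Theorem~4.3]{NakasujiPhuksuwanYamasaki} and used as a black box. Compared with the proof in that reference, your route is essentially the same one: there, too, the formulas are obtained by encoding semistandard tableaux as families of non-intersecting lattice paths and applying the Lindstr\"om--Gessel--Viennot lemma, and the hypothesis $\kk\in T^{\mathrm{diag}}(\lambda\slash\mu)$ enters exactly where you say it does, by making the weight $v^{-d_m}$ of a horizontal step depend only on its diagonal $m$ and its height $v$, so that the single-path sum from source $j$ to sink $i$ equals the matrix entry $\zeta^\star(d_{\mu_j-j+1},\dots,d_{\lambda_i-i})$ of \eqref{for:JTzetastar}, respectively the $\zeta$-entry of \eqref{for:JTzeta} in the conjugate (column-strict) setting, with the stated degenerate conventions. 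Your bookkeeping is correct, including the identification of which hypothesis makes which entries converge ($d_{\lambda_i-i}\ge 2$ for rows, $d_{i-\lambda'_i}\ge 2$ for columns). The one point you overestimate is the analytic care needed at the end: since every single-path sum is a positive-term series, each of the finitely many products of such sums appearing in the expansion of the determinant converges absolutely, so the signed sum over all pairs of a permutation and a compatible path family is absolutely convergent; the sign-reversing involution on intersecting families may then be applied after arbitrary regrouping, and no uniform estimates are required. With the standard bijection and the involution written out, your sketch is a complete proof along the same lines as the one in the cited reference.
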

 
\subsection{Checkerboard style Schur multiple zeta values} 
Let $a,b\in\mathbb{N}$ with $b\ge 2$.
In the remaining part of this paper we will be interested in Young diagrams $\lambda/\mu$ satisfying the following properties:
there exists a Young tableau $\kk_{a,b}=(k_{i,j})\in T(\lambda/\mu)$ whose entries are arranged in a "Checkerboard style",
that is, 
\begin{enumerate}[i)]
\item
$k_{i,j} = a$ if $i-j$ is even and $k_{i,j} = b$ otherwise, or, 
$k_{i,j} = a$ if $i-j$ is odd and $k_{i,j} = a$ otherwise.
\item
$k_{i,j}=b$ whenever $(i,j)\in C(\lambda/\mu)$.
\end{enumerate}
Notice that if $\kk_{a,b}$ exists,
 it is in $T^{\mathrm{diag}}(\lambda/\mu)$ by the first condition
and always admissible by the second one. Moreover conditions i) and ii) ensure that $\kk_{a,b}$ is unique for a given Young diagram $\lambda/\mu$.
We call such a Young diagram $\lambda/\mu$ checkerboardable.
The aim of this paper is, for a checkerboardable Young diagram $\lambda/\mu$,
to study the checkerboard style Schur multiple zeta value $\zeta(a,b;\lambda/\mu)=\zeta(\kk_{a,b})$. 

An immediate consequence of  the Jacobi-Trudi formula above is the following.
\begin{lemma}
 Let $\lambda\slash\mu$ be a checkerboardable Young diagram.
\begin{enumerate}[i)]
\item
 If the last entry of every row in $\kk_{a,b}$ is $b$, then we have
\[
 \zeta(a,b;\lambda/\mu)\in\mathbb{Q}[\zeta^{\star}(\{a,b\}^n),\zeta^{\star}(b,\{a,b\}^n)\,|\,n\geq 0] \,.
\]
\item
 If the last entry of every column in $\kk_{a,b}$ is $b$, then we have  
\[
 \zeta(a,b;\lambda/\mu)\in\mathbb{Q}[\zeta(\{a,b\}^n),\zeta(b,\{a,b\}^n)\,|\,n\geq 0] \,.
\]
\end{enumerate} 
\end{lemma}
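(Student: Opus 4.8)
The plan is to apply Theorem~\ref{thm:JT} directly and to analyze the structure of the resulting determinant entries. Consider case i), where the last entry of every row in $\kk_{a,b}$ is $b$. By Theorem~\ref{thm:JT}~i), the value $\zeta(a,b;\lambda/\mu)=\zeta(\kk_{a,b})$ equals a determinant whose $(i,j)$-entry is a multiple zeta-star value
\[
\zeta^\star(d_{\mu_j-j+1},d_{\mu_j-j+2},\dots,d_{\mu_j-j+(\lambda_i-\mu_j-i+j)}),
\]
where $d_m=k_{i,i+m}$ records the constant value of $\kk_{a,b}$ on the diagonal $j-i=m$. Since $\det$ of a matrix is a $\Z$-linear (in particular $\Q$-linear) combination of products of its entries, it suffices to show that each such entry lies in the claimed ring, namely that every one of these multiple zeta-star values is of the form $\zeta^\star(\{a,b\}^n)$ or $\zeta^\star(b,\{a,b\}^n)$ for some $n\geq 0$ (with the conventions that the empty argument gives $1$ and a negative length gives $0$, both of which lie in $\Q$).

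The key observation driving each entry into the right shape is the Checkerboard condition i): since $k_{i,j}=a$ exactly when $i-j$ is even, the diagonal value $d_m$ depends only on the parity of $m$, with $d_m=a$ for $m$ even and $d_m=b$ for $m$ odd (or the swapped convention, depending on which of the two alternatives in condition~i) holds). Thus the argument string of any entry is a consecutive run of diagonal values, which automatically alternates between $a$ and $b$. First I would verify that the \emph{final} entry of each nonempty argument string is $b$: this is forced by the hypothesis that the last entry of every row is $b$, which translates via the diagonal indexing into a parity statement about the top index $\mu_j-j+(\lambda_i-\mu_j-i+j)=\lambda_i-i$. Because the last box of row $i$ sits at column $\lambda_i$ and carries value $b$, we get $\lambda_i-i$ odd in the relevant convention, pinning the last entry of the string to $b$.

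Next I would read off the beginning of the string. An alternating word in $a,b$ that ends in $b$ is either $(a,b,a,b,\dots,a,b)=\{a,b\}^n$ when it starts with $a$, or $(b,a,b,\dots,a,b)=(b,\{a,b\}^n)$ when it starts with $b$; these are precisely the two families appearing in the statement. So every entry is of one of these two forms, and each lies in the polynomial ring $\Q[\zeta^\star(\{a,b\}^n),\zeta^\star(b,\{a,b\}^n)\mid n\geq 0]$. Taking the determinant, the whole value lands in this ring, proving i). Part ii) follows by the identical argument applied to Theorem~\ref{thm:JT}~ii): here one uses the conjugate partition indexing and the hypothesis that the last entry of every column is $b$, and the Checkerboard parity condition again forces each $\zeta$-argument to be an alternating word in $a,b$ ending in $b$, hence of type $\{a,b\}^n$ or $b,\{a,b\}^n$.

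The main obstacle is purely bookkeeping: one must track the diagonal indexing $d_m=k_{i,i+m}$ through the shifted index ranges in \eqref{for:JTzetastar} and \eqref{for:JTzeta} and confirm that the parity of the starting and ending indices is exactly what the two families require, in both the ``$i-j$ even $\to a$'' and ``$i-j$ odd $\to a$'' conventions of Checkerboard condition~i). I expect no genuine difficulty beyond careful parity tracking, since the alternating structure of the entries and the corner condition~ii) (which guarantees admissibility and that the terminal diagonal value is $b$) do all the real work; the result is essentially immediate once the diagonal values are seen to depend only on $m \bmod 2$.
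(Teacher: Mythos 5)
Your proposal is correct and is exactly the argument the paper has in mind: the paper states this lemma as "an immediate consequence of the Jacobi-Trudi formula" (Theorem~\ref{thm:JT}), i.e., each determinant entry is an alternating string in $a,b$ ending in $b$ (hence of the form $\zeta^\star(\{a,b\}^n)$ or $\zeta^\star(b,\{a,b\}^n)$, resp.\ the $\zeta$-versions), and the determinant is an integer polynomial in these entries. Your parity bookkeeping via $d_{\lambda_i-i}=k_{i,\lambda_i}=b$ (and $d_{-(\lambda_i'-i)}=k_{\lambda_i',i}=b$ in case ii)) just makes explicit what the paper leaves to the reader.
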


\begin{example} 
\begin{enumerate}[i)]
\item
 When $\lambda\slash\mu=(5,4,3)\slash(3,1)$ we have from \eqref{for:JTzetastar} 
\begin{align*}
\ytableausetup{mathmode,boxsize=1.2em,aligntableaux=center} 
\zeta(a,b;\lambda\slash\mu)
=\zeta\left(\ {\footnotesize
\begin{ytableau}
\none & \none & \none & a & *(gray)b\\
\none & *(gray)b & a & *(gray)b \\
*(gray)b & a & *(gray)b
\end{ytableau}}
\ \right) &= \left|
\begin{array}{ccc}
\zeta^{\star}(a,b) & \zeta^{\star}(b,\{a,b\}^2) & \zeta^{\star}(b,\{a,b\}^3) \\
1 & \zeta^{\star}(b,a,b) & \zeta^{\star}(b,\{a,b\}^2) \\
0 & \zeta^{\star}(b) & \zeta^{\star}(b,a,b)
\end{array}
\right|\,\\
&=\zeta^{\star}(a,b)\zeta^{\star}(b,a,b)^2+\zeta^{\star}(b)\zeta^{\star}(b,\{a,b\}^3)\\
&\ \ \ -\zeta^{\star}(b,a,b)\zeta^{\star}(b,\{a,b\}^2)-\zeta^{\star}(b)\zeta^{\star}(a,b)\zeta^{\star}(b,\{a,b\}^2)\,.
\end{align*}
\item
 When $\lambda\slash\mu=(3,3,3,2,1)\slash(2,1,1)$ we have from \eqref{for:JTzeta} 
\begin{align*}
\ytableausetup{mathmode,boxsize=1.2em,aligntableaux=center} 
\zeta(a,b;\lambda\slash\mu)
=\zeta\left(\ {\footnotesize 
\begin{ytableau}
 \none & \none & *(gray)b \\
 \none & *(gray)b & a \\
 \none & a & *(gray)b \\
 a & *(gray)b \\
 *(gray)b 
\end{ytableau}}
\ \right) &= \left|
\begin{array}{ccc}
\zeta(a,b) & \zeta(b,\{a,b\}^2) & \zeta(b,\{a,b\}^3) \\
1 & \zeta(b,a,b) & \zeta(b,\{a,b\}^2) \\
0 & \zeta(b) & \zeta(b,a,b)
\end{array}
\right| \,\\
&=\zeta(a,b)\zeta(b,a,b)^2+\zeta(b)\zeta(b,\{a,b\}^3)\\
&\ \ \ -\zeta(b,a,b)\zeta(b,\{a,b\}^2)-\zeta(b)\zeta(a,b)\zeta(b,\{a,b\}^2)\,.
\end{align*}
\end{enumerate}
\end{example}

\section{Ribbons}

 A Young diagram $\lambda\slash\mu$ is called a ribbon if it is connected and does not contain any $2\times 2$ blocks. For example the following Young tableau is an example of a Checkerboard style ribbon.
 \begin{align*}
{\footnotesize
\begin{ytableau}
\none&\none & \none & \none & *(gray)b & a&*(gray)b\\
\none&\none & \none & \none & a &\none\\
\none&\none & \none & a & *(gray)b \\
a&*(gray)b & a & *(gray)b
\end{ytableau}} 
\end{align*}
In this section we will show that all Schur multiple zeta values of Checkerboard style ribbons can be reduced to Schur multiple zeta values of certain types of ribbons, which we call primitive ribbons. First we will define these primitive ribbons and prove basic properties of them. After this we will give explicit evaluating in terms of multiple zeta values in the case $(a,b)=(1,3)$. At the end of this section we consider hooks and anti-hooks. One of the results of this section will be the following.

\begin{thm}\label{thm:13rib}
For $(a,b)=(1,3)$ the space spanned by all Schur multiple zeta values of Checkerboard style ribbons is given by $\Q[\pi^4,\zeta(3),\zeta(5),\zeta(7),\dots]$.
\end{thm}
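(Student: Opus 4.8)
The plan is to prove that the $\Q$-vector space spanned by all Checkerboard style ribbon Schur multiple zeta values for $(a,b)=(1,3)$ equals $\Q[\pi^4,\zeta(3),\zeta(5),\zeta(7),\dots]$, by showing the two inclusions separately. The containment of the span inside $\Q[\pi^4,\zeta(3),\zeta(5),\dots]$ should follow from a reduction argument: a general Checkerboard ribbon can be decomposed, via the harmonic product formula \eqref{for:harmonicproduct} and the Jacobi--Trudi determinant formulas of Theorem~\ref{thm:JT}, into a polynomial expression in values attached to the ``primitive'' ribbons promised in the text. Since each ribbon lies in $T^{\mathrm{diag}}(\lambda/\mu)$, the Jacobi--Trudi formula writes $\zeta(\kk_{1,3})$ as a determinant whose entries are multiple zeta(-star) values of the alternating index $\{1,3\}$, i.e. values of the form $\zeta^{(\star)}(\{1,3\}^n)$, $\zeta^{(\star)}(3,\{1,3\}^n)$, and their variants. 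The crucial input is then the explicit evaluation of these building blocks: by \eqref{eq:31formula} one has $\zeta(\{1,3\}^n)=\tfrac{2\pi^{4n}}{(4n+2)!}$, which lies in $\Q[\pi^4]$, and by the sum formulas \eqref{eq:oddsums} (Theorem~\ref{thm:AB13}) the ``corner'' ribbons with an extra $1$ or $3$ attached evaluate to rational multiples of single odd zeta values $\zeta(4n+1)$ and $\zeta(4n+3)$. Feeding these closed forms back into the determinant expansions shows every such Schur value is a polynomial with rational coefficients in $\pi^4$ and the odd zetas, giving the forward inclusion.

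For the reverse inclusion I would exhibit, for each generator of $\Q[\pi^4,\zeta(3),\zeta(5),\dots]$, a Checkerboard ribbon (or a $\Q$-linear combination of ribbons) realizing it. The generator $\pi^4$ is in the span because $\zeta(\{1,3\}^1)=\zeta(1,3)$ is the single-column ribbon with entries $1,3$, and $\zeta(1,3)=\tfrac{2\pi^4}{6!}$ by \eqref{eq:31formula}, a nonzero rational multiple of $\pi^4$. For the odd single zeta values, the two families in \eqref{eq:oddsums} directly provide ribbons (the $L$-shaped hooks and anti-hooks) whose values are $\tfrac{2}{4^n}\zeta(4n+1)$ and $\tfrac{1}{4^n}\zeta(4n+3)$; letting $n$ range over $n\ge 1$ and $n\ge 0$ respectively recovers every $\zeta(2k+1)$ with $2k+1\ge 3$ up to a nonzero rational scalar. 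Hence each algebra generator is a rational multiple of a single ribbon value, so all of them lie in the span. I should also confirm that the span is closed under the ring structure of $\Q[\pi^4,\zeta(3),\zeta(5),\dots]$ only up to what is needed, i.e. that linear combinations suffice to reach each generator; closure under products is not required for the reverse inclusion but is guaranteed on the forward side by the harmonic product realizing products of ribbon values as ribbon values again.

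The main obstacle I anticipate is the \emph{reduction of an arbitrary Checkerboard ribbon to the primitive ones}, i.e. making precise the claim in the text that ``all Schur multiple zeta values of Checkerboard style ribbons can be reduced to primitive ribbons.'' A ribbon is a connected skew shape with no $2\times2$ block, so it is determined by a sequence of horizontal and vertical steps; the Checkerboard coloring then forces a rigid alternation, but the two free parameters are where the shape starts (whether the bottom-left corner is $1$ or $3$) and the pattern of turns. I would carefully set up an induction on the number of corners (or on the total number of boxes), peeling off a maximal horizontal or vertical strip at an end of the ribbon and applying the harmonic product \eqref{for:harmonicproduct} to split $\zeta(\kk)$ into a sum over a shorter ribbon times a column/row factor, each of which is a multiple zeta(-star) value of alternating $\{1,3\}$-type. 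The delicate bookkeeping is to verify that the admissibility condition ($k_{i,j}\ge2$ at corners, guaranteed by property~ii) of checkerboardability) is preserved under each peeling step, and that every value produced is genuinely one of the explicitly-evaluated primitives from \eqref{eq:31formula} and \eqref{eq:oddsums} rather than a new irreducible object. Once the primitive list is shown to be exactly $\{\pi^4\text{-multiples}\}\cup\{\text{odd-zeta multiples}\}$, both inclusions follow and the theorem is proved.
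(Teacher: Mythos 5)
Your proposal is correct and follows essentially the same route as the paper: reduce every Checkerboard ribbon to the primitives $A$, $B$, $S$, $S^\star$ together with $\zeta(\{1,3\}^n)$ and $\zeta(3,\{1,3\}^n)$ via the harmonic product \eqref{for:harmonicproduct}, then substitute the evaluations of Theorems \ref{thm:SSstar}, \ref{thm:AB13} and Corollary \ref{cor:XXsYYs}, with the reverse inclusion coming from $\zeta(1,3)\in\Q\,\pi^4$, $A_{1,3}(n)=\tfrac{2}{4^n}\zeta(4n+1)$ and $B_{1,3}(n)=\tfrac{1}{4^n}\zeta(4n+3)$. One small correction: the Jacobi--Trudi formulas of Theorem \ref{thm:JT} cannot serve as an auxiliary reduction tool for all Checkerboard ribbons (e.g.\ for $A_{1,3}(n)$ both a row and a column end in the entry $1$, so neither hypothesis of that theorem holds), but this costs nothing, since the harmonic-product induction you outline as your "main obstacle" is exactly the paper's argument and suffices on its own.
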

\subsection{Primitive ribbons} \label{sec:primrib}
 In this section we will 
 let $\delta_n=(n,n-1,\ldots,2,1)$
and set $\delta_0=\delta_{-1}=\emptyset$. 
We now focus on the following checkerboard style Schur multiple zeta values 
 $A(n)=A_{a,b}(n)$ for $n\ge 1$ and $B(n)=B_{a,b}(n)$ for $n\ge 0$.
\begin{align} 
\ytableausetup{mathmode,boxsize=1.1em,aligntableaux=center} 
\label{def:A}
 A(n)
&=A_{a,b}(n)
=
\zeta
\left(
\ {\footnotesize
\begin{ytableau}
 \none & \none    & \none  & a        \\
 \none & \none    & \adots & *(gray)b \\
 \none & a        & \adots \\
 a     & *(gray)b  
\end{ytableau}}
\ \right)
=\zeta(a,b;(n+1,n+1,n,n-1,\ldots,3,2)/\delta_{n})
\,,\\
\label{def:B}
 B(n)
&=B_{a,b}(n)
=
\zeta\left(
\ {\footnotesize
\begin{ytableau}
 \none  & \none & a & *(gray)b \\
 \none  & \adots& *(gray)b \\
 a & \adots \\
 *(gray)b
\end{ytableau}}
\
\right)
=\zeta(a,b;\delta_{n+1}/\delta_{n-1}) \,.
\end{align}
 For convenience, we set $A(n)=0$ if $n\leq 0$ and $B(n)=0$ if $n<0$. 
Further we define for  $n\geq 1$   
\begin{align*}
 L(n)&=L_{a,b}(n)=\zeta(a,b;(2^{2n})/(1^{2n-1}))\,, & 
 L^{\star}(n)&=L^{\star}_{a,b}(n)=\zeta(a,b;((2n)^2)/(2n-1))\,, 
\end{align*}
and for $n \geq 0$
\begin{align*}
 S(n)&=S_{a,b}(n)=\zeta(a,b;(n,n,n-1,\ldots,2,1)/\delta_{n-1})\,, & 
 S^{\star}(n)&=S^{\star}_{a,b}(n)=\zeta(a,b;(n+1,n,\ldots,3,2)/\delta_{n-1})\,, 
\end{align*}
where we set $S(0) = S^\star(0)=1$.
In terms of Young tableau these are given by 
\begin{align*}
\ytableausetup{mathmode,boxsize=1.1em,aligntableaux=center} 
 L(n)=
\zeta
\left(
\ {\footnotesize
\begin{ytableau}
 \none & a        \\
 \none & *(gray)b \\
 \none & \svdots  \\
 \none & a        \\
 a     & *(gray)b 
\end{ytableau}}
 \ \right) \,,
 & &
 L^{\star}(n)&=
\zeta
\left(
\ {\footnotesize
\begin{ytableau}
 \none & \none    & \none  & \none & a       \\
 a     & *(gray)b & \cdots & a     & *(gray)b
 \end{ytableau}}
 \ \right) \,,
\\[5pt]
 S(n)=
\zeta
\left(
\ {\footnotesize
\begin{ytableau}
 \none    & \none  & a        \\
 \none    & \adots & *(gray)b \\
 a        & \adots \\
 *(gray)b 
\end{ytableau}}
\ \right) \,,
 & &
 S^{\star}(n)&=
\zeta
\left(
\ {\footnotesize
\begin{ytableau}
 \none & \none  & a      & *(gray)b \\
 \none & \adots & \adots \\
 a     & *(gray)b 
\end{ytableau}}
\ \right)\,.
\end{align*}
 Notice that $A(n)$ and $B(n)$ are obtained
 by adding an $a$ on the bottom left of $S(n)$ and a $b$ on the top right, respectively.
 Using the harmonic product \eqref{for:harmonicproduct}, one easily show by induction on $n$ that  
 both $A(n)$ and $B(n)$ satisfy the following recursion formulas.

\begin{lemma}\label{lem:ABrec}
\begin{enumerate}[i)]
\item
 For $n\ge 2$, we have  
\begin{align}
\label{for:Arec}
 A(n)&=(-1)^{n-1}L(n)-\sum^{n-1}_{k=1}(-1)^{n-k}A(k)\,\zeta(\{a,b\}^{n-k})\,,\\
\label{for:Arecstar}
 A(n)&=(-1)^{n-1}L^{\star}(n)-\sum^{n-1}_{k=1}(-1)^{n-k}A(k)\,\zeta^\star(\{a,b\}^{n-k})\,.
\end{align} 
\item
 For $n\ge 1$, we have   
\begin{align}
\label{for:Brec}
 B(n)&=(-1)^{n} \zeta(b,\{a,b\}^{n})-\sum^{n-1}_{k=0}(-1)^{n-k}B(k)\,\zeta(\{a,b\}^{n-k})\,,\\
\label{for:Brecstar}
 B(n)&=(-1)^{n} \zeta^\star(b,\{a,b\}^{n})-\sum^{n-1}_{k=0}(-1)^{n-k}B(k)\,\zeta^\star(\{a,b\}^{n-k})\,.
\end{align} 
\end{enumerate} 
\end{lemma}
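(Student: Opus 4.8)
The plan is to prove all four recursions by one and the same mechanism, and I will describe it for \eqref{for:Arec}, indicating afterwards the changes needed for the other three. Moving the sum to the left, \eqref{for:Arec} is equivalent to $\sum_{k=1}^{n}(-1)^{n-k}A(k)\,\zeta(\{a,b\}^{n-k})=(-1)^{n-1}L(n)$, where the term $k=n$ is just $A(n)$. The idea is to expand every product on the left by the harmonic product \eqref{for:harmonicproduct} and to show that, once the signs $(-1)^{n-k}$ are attached, all the resulting Schur multiple zeta values cancel in pairs except for the single boundary value $(-1)^{n-1}L(n)$.

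First I would record two elementary gluing facts. Write $C_m=\zeta(\{a,b\}^m)$ for the single column with $2m$ boxes, so that $C_0$ is empty and $A(k)\,C_0=A(k)$. Since the top-right entry of the anti-ribbon $A(k)$ is its tip $a$, while the extra foot $a$ sits far away at the bottom-left, attaching a block at the tip lengthens the ribbon by one rung, giving $A(k)\oplus_{\mathrm{h}}C_1=A(k+1)$. Moreover there is a ``straightening'' identity: placing a column of height $2m$ on top of $A(k)$ yields the same skew tableau as attaching a column of height $2(m+1)$ to the right of $A(k-1)$, i.e. $A(k)\oplus_{\mathrm{v}}C_m=A(k-1)\oplus_{\mathrm{h}}C_{m+1}$ for $k\ge 2$, whereas for $k=1$ the vertical stack closes into the hook $L$, namely $A(1)\oplus_{\mathrm{v}}C_m=L(m+1)$. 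Each of these is verified by drawing the two skew diagrams and matching their checkerboard fillings. This diagram bookkeeping, elementary as it is, is the main obstacle: one has to keep precise track of where the tip and foot sit and of how the alternating entries line up after gluing.

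Granting these facts, I would set $A(k)\,\zeta(\{a,b\}^{n-k})=H_k+V_k$ with $H_k=A(k)\oplus_{\mathrm{h}}C_{n-k}$ and $V_k=A(k)\oplus_{\mathrm{v}}C_{n-k}$ for $1\le k\le n-1$, the term $k=n$ being $A(n)$. By the straightening identity $V_k=H_{k-1}$ for $2\le k\le n-1$ and $V_1=L(n)$, while by the rung identity $H_{n-1}=A(n)$. Substituting into the signed sum and reindexing, the interior horizontal terms $H_k$ telescope against the shifted vertical terms, the explicit $A(n)$ is cancelled by $-H_{n-1}=-A(n)$, and only $(-1)^{n-1}V_1=(-1)^{n-1}L(n)$ remains; this is \eqref{for:Arec}. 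Presented as an induction on $n$, the same computation is the single step $A(n-1)\,\zeta(a,b)=A(n)+V_{n-1}$ followed by rewriting $V_{n-1}$ through the straightening identity and the recursion at lower levels.

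For \eqref{for:Brec} the scheme is identical save for one point, which is the real subtlety of the $B$-case: the top-right entry of $B(k)$ is now the added $b$, so gluing a column after $B(k)$ no longer grows the ribbon. One must instead put the column factor first and decompose $B(k)\,\zeta(\{a,b\}^{n-k})=\big(C_{n-k}\oplus_{\mathrm{h}}B(k)\big)+\big(C_{n-k}\oplus_{\mathrm{v}}B(k)\big)$. The corresponding gluing facts are $C_1\oplus_{\mathrm{h}}B(k)=B(k+1)$, the straightening $C_m\oplus_{\mathrm{v}}B(k)=C_{m+1}\oplus_{\mathrm{h}}B(k-1)$ for $k\ge 1$, and the boundary identity $C_n\oplus_{\mathrm{v}}B(0)=\zeta(b,\{a,b\}^n)$ in which the single top box $b$ of $B(0)=\zeta(b)$ becomes the leading $b$ of the column; the same telescoping then gives \eqref{for:Brec}. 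Finally, I expect the two starred recursions \eqref{for:Arecstar} and \eqref{for:Brecstar} to follow from the very same argument after transposing all diagrams, so that the columns $C_m$ are replaced by the rows $\zeta^\star(\{a,b\}^m)$, horizontal and vertical gluings are interchanged, and the boundary values become $L^\star(n)$ and $\zeta^\star(b,\{a,b\}^n)$; since $A(n)$ and $B(n)$ are symmetric under transposition this can also be read off from the conjugation symmetry of Schur multiple zeta values.
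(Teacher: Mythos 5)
Your proposal is correct and is essentially the paper's own proof: the paper disposes of this lemma in one line, asserting that both recursions follow ``by induction on $n$'' from the harmonic product \eqref{for:harmonicproduct}, and your gluing identities (rung, straightening, boundary) together with the signed telescoping are precisely the details that assertion leaves to the reader. One caution: your closing remark that the starred cases ``can also be read off from the conjugation symmetry of Schur multiple zeta values'' should be deleted, since no such value-level symmetry exists (in general $\zeta(\kk)\neq\zeta(\kk')$, e.g. $\zeta^\star(a,b)=\zeta(a,b)+\zeta(a+b)\neq\zeta(a,b)$); what actually works is exactly your primary argument, namely transposing all diagrams — which replaces the columns $C_m$ by rows, interchanges $\oplus_{\mathrm{h}}$ and $\oplus_{\mathrm{v}}$ (and reverses the order of gluing), and turns the boundary values into $L^\star(n)$ and $\zeta^\star(b,\{a,b\}^n)$ — and then re-running the identical telescoping.
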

 
%
 Similarly there is a connection between the Schur multiple zeta values $S(n)$ and $\zeta(\{a,b\}^{n})$.

\begin{lemma}\label{lem:SSstargenseries}
\begin{enumerate}[i)]
\item The generating series of $S(n)$ and $S^{\star}(n)$ are given by
\begin{equation}
\label{for:gf_of_SSstar}
\sum_{n\geq 0} S(n) x^n = \left( \sum_{n\geq 0} (-1)^n \zeta(\{a,b\}^{n}) x^n\right)^{-1} \,,\quad
\sum_{n\geq 0} S^{\star}(n) x^n = \left( \sum_{n\geq 0} (-1)^n \zeta^\star(\{a,b\}^{n}) x^n\right)^{-1} \,,
\end{equation}
\item For all $n\geq 0$ we have 
\begin{equation}
\label{for:SstarS}
 S^{\star}(n) = \sum_{k=0}^n S(k)\zeta(\{a+b\}^{n-k}) \,.
\end{equation}
\end{enumerate}
\end{lemma}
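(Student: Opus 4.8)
The plan is to prove both parts by passing, through the Jacobi-Trudi formula of Theorem~\ref{thm:JT}, from the Schur multiple zeta values $S(n),S^\star(n)$ to the single (star) values $\zeta(\{a,b\}^m),\zeta^\star(\{a,b\}^m)$, and then to argue entirely with formal power series. For part (i) I would first identify the determinants produced by Theorem~\ref{thm:JT}. The shape of $S(n)$ is the staircase ribbon $(n,n,n-1,\dots,1)/\delta_{n-1}$, whose diagonal content satisfies $d_m\in\{a,b\}$ according to the parity of $m$, the parity being fixed by the requirement that the corners carry $b$. A direct inspection of the column version \eqref{for:JTzeta} then shows that each entry of the $n\times n$ matrix is a diagonal string equal to $\zeta(\{a,b\}^{1-i+j})$ (with the conventions $\zeta(\{a,b\}^0)=1$ and $\zeta(\{a,b\}^{m})=0$ for $m<0$), so that $S(n)=\det\big(\zeta(\{a,b\}^{1-i+j})\big)_{1\le i,j\le n}$; I would record this collapse once, the only care being the start/end parity of each diagonal string. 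Now $\det(e_{1-i+j})_{1\le i,j\le n}$ is precisely the coefficient of $t^n$ in $\big(\sum_{m\ge0}(-1)^me_mt^m\big)^{-1}$, viewed as a universal identity in the formal variables $e_0=1,e_1,e_2,\dots$. Substituting $e_m=\zeta(\{a,b\}^m)$ yields the first equality of \eqref{for:gf_of_SSstar}; the second is obtained identically from the row version \eqref{for:JTzetastar} with $e_m=\zeta^\star(\{a,b\}^m)$.

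For part (ii), assuming (i), both generating series in \eqref{for:gf_of_SSstar} are reciprocals of known series, so the claimed identity \eqref{for:SstarS} becomes, after comparing coefficients, equivalent to the single power-series identity
\[
\sum_{m\ge0}(-1)^m\zeta(\{a,b\}^m)x^m=\Big(\sum_{j\ge0}\zeta(\{a+b\}^j)x^j\Big)\Big(\sum_{m\ge0}(-1)^m\zeta^\star(\{a,b\}^m)x^m\Big),
\]
that is, to the family of scalar identities $\zeta(\{a,b\}^n)=\sum_{k=0}^n(-1)^k\zeta(\{a+b\}^k)\,\zeta^\star(\{a,b\}^{n-k})$ for $n\ge0$. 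I would prove this family by expanding the right-hand side: first rewrite each $\zeta^\star(\{a,b\}^{n-k})$ through the standard conversion $\zeta^\star=\sum_{\text{coarsenings}}\zeta$ of the alternating index, then multiply by $\zeta(\{a+b\}^k)$ using the harmonic product \eqref{for:harmonicproduct} in its scalar (stuffle) form. The mechanism I expect is that merging a maximal constant run of the word $a,b,a,b,\dots$ of even length contributes a power of $(a+b)$, so that the factor $\zeta(\{a+b\}^k)$, carried by the sign $(-1)^k$, accounts exactly for those blocks and forces all contributions to cancel except the single term $\zeta(\{a,b\}^n)$.

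The routine parts are the determinant collapse in (i) and the passage between coefficients and generating series. The main obstacle is this last scalar conversion identity: once the product $\zeta(\{a+b\}^k)\zeta^\star(\{a,b\}^{n-k})$ is fully expanded into a signed sum of ordinary multiple zeta values, one must check that everything cancels down to $\zeta(\{a,b\}^n)$, and tracking which merges of the alternating string create $(a+b)$'s and matching them against the signs $(-1)^k$ is the delicate combinatorial step. I would organize it either by a generating-function argument or by induction on $n$ keyed to the first maximal constant run of the summation variables, fixing the pattern by first verifying the cases $n=1,2$.
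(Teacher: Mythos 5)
Your part (i) is correct, but it takes a genuinely different route from the paper. The paper does not invoke Theorem~\ref{thm:JT} here at all: it reduces \eqref{for:gf_of_SSstar} to the convolution identities $\sum_{k=0}^{n}(-1)^{n-k}S(k)\,\zeta(\{a,b\}^{n-k})$ (and the star analogue) being $1$ for $n=0$ and $0$ for $n\geq 1$, and proves these by induction on $n$ using only the harmonic product \eqref{for:harmonicproduct}, i.e.\ by gluing the column $\{a,b\}^{n-k}$ onto the staircase $S(k)$ and letting the sum telescope. Your route --- collapsing the Jacobi--Trudi determinant \eqref{for:JTzeta} for the staircase shape to $S(n)=\det\bigl(\zeta(\{a,b\}^{1-i+j})\bigr)_{1\le i,j\le n}$ and then quoting the universal identity $\sum_{n}\det(e_{1-i+j})_{n\times n}\,t^n=\bigl(\sum_{m}(-1)^m e_m t^m\bigr)^{-1}$ --- is valid; the parity bookkeeping does check out, since for this shape every matrix entry in \eqref{for:JTzeta} is a diagonal string of even length $2(1-i+j)$ starting with $a$ and ending with $b$, and likewise for \eqref{for:JTzetastar} in the star case. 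What your approach buys is a conceptual explanation of why an inverse generating series appears (it is the classical relation between the $E$- and $H$-series); what it costs is dependence on the imported Theorem~\ref{thm:JT}, whereas the paper's induction is self-contained and uses the same tool (the harmonic product) that drives Lemma~\ref{lem:ABrec}.

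Part (ii) has a genuine gap. Your reduction of \eqref{for:SstarS}, given (i), to the scalar family
\[
\zeta(\{a,b\}^n)=\sum_{k=0}^n(-1)^k\zeta(\{a+b\}^k)\,\zeta^\star(\{a,b\}^{n-k})\qquad(n\ge 0)
\]
is correct, but this family is not a routine stuffle verification: it is equivalent, via the well-known identity $\sum_{n\geq 0}(-1)^n\zeta^{\star}(\{a+b\}^n)x^n=\bigl(\sum_{n\geq 0}\zeta(\{a+b\}^n)x^n\bigr)^{-1}$, to precisely the theorem $\zeta^\star(\{a,b\}^n)=\sum_{k=0}^n\zeta(\{a,b\}^k)\,\zeta^{\star}(\{a+b\}^{n-k})$ that the paper \emph{cites} rather than proves (\cite[Theorem~6]{Muneta2008}). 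Your proposal stops at describing an expected cancellation mechanism and verifying $n=1,2$; moreover the mechanism as stated is imprecise, since coarsening an even run of the alternating word produces a single large part such as $2a+2b$, not a string of parts equal to $a+b$, so the matching against the $k$ separate entries of $\zeta(\{a+b\}^k)$ created by the stuffle is exactly the delicate bookkeeping whose execution constitutes the cited theorem. As written, your part (ii) is a correct reduction to an unproven (though true) identity; to close it you must either cite \cite{Muneta2008}, as the paper does, or actually carry out the induction or generating-function argument you sketch --- that is a substantive piece of work, not the routine part of the proof.
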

\begin{proof}
 To prove \eqref{for:gf_of_SSstar}, it is sufficient to show
\[
 \sum^{n}_{k=0}(-1)^{n-k}S(k) \zeta(\{a,b\}^{n-k})
=
\begin{cases}
1 & n=0, \\
0 & \text{otherwise},
\end{cases}
\quad 
 \sum^{n}_{k=0}(-1)^{n-k}S^{\star}(k) \zeta^\star(\{a,b\}^{n-k})
=
\begin{cases}
1 & n=0, \\
0 & \text{otherwise}.
\end{cases}
\] 
 This is proven by inductively on $n$ with the help of \eqref{for:harmonicproduct}.
 To prove \eqref{for:SstarS} one uses the formula
\[\zeta^\star(\{a,b\}^n) = \sum_{k=0}^n \zeta(\{a,b\}^k) \zeta^{\star}(\{a+b\}^{n-k}) \,,\]
 which is obtained in \cite[Theorem 6]{Muneta2008} together with \eqref{for:gf_of_SSstar} and 
 the well-known formula
\[
 \sum_{n\geq 0}(-1)^n\zeta^{\star}(\{a+b\}^n)x^n
=\left(\sum_{n\geq 0}\zeta(\{a+b\}^n)x^n\right)^{-1} \,.
\]
\end{proof} 

\subsection{Primitive ribbons for $(a,b)=(1,3)$}

In the case $(a,b) = (1,3)$ the Schur multiple zeta values $A,B,L$ and $S$ can be evaluated explicitly in terms of multiple zeta values. 
\begin{thm}
\label{thm:SSstar}
\begin{enumerate}[i)]
\item
 For $n\ge 1$, we have  
\begin{align}
\label{for:S13}
\ytableausetup{mathmode,boxsize=1.2em,aligntableaux=center} 
 S_{1,3}(n)
&=\zeta
\left({\footnotesize
\ 
\begin{ytableau}
 \none    & \none     & 1        \\
 \none    & \adots    & *(gray)3 \\
 1        & \adots \\
 *(gray)3 
\end{ytableau}}
\ \right) 
= \frac{1}{4^n} \zeta^\star(\{4\}^n) \,.
\end{align}
\item
 For $n\ge 1$, we have  
\begin{align}
\label{for:Sstar13}
\ytableausetup{mathmode,boxsize=1.2em,aligntableaux=center} 
 S^{\star}_{1,3}(n)
&=\zeta
\left({\footnotesize
\ 
\begin{ytableau}
 \none        & \none  & 1      & *(gray)3 \\
 \none        & \adots & \adots \\
 1 & *(gray)3 
\end{ytableau}}
\ \right)
 = \sum_{k=0}^n \frac{1}{4^k} \zeta^\star(\{4\}^k) \zeta(\{4\}^{n-k})
 \,.
\end{align}
\end{enumerate} 
\end{thm}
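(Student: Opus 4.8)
The plan is to obtain both parts as specializations of the generating-function relations of Lemma~\ref{lem:SSstargenseries} at $(a,b)=(1,3)$, the essential extra ingredient being the evaluation \eqref{eq:31formula}.

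For part i) I begin with the first identity in \eqref{for:gf_of_SSstar}, which at $(a,b)=(1,3)$ reads $\sum_{n\ge 0} S_{1,3}(n)\, x^n = \bigl(\sum_{n\ge 0}(-1)^n \zeta(\{1,3\}^n)\, x^n\bigr)^{-1}$. The key step is to insert the formula \eqref{eq:31formula}, i.e.\ $\zeta(\{1,3\}^n)=4^{-n}\zeta(\{4\}^n)$, which rewrites the right-hand side as $\bigl(\sum_{n\ge 0}(-1)^n \zeta(\{4\}^n)\,(x/4)^n\bigr)^{-1}$. I then invoke the same well-known symmetric-function identity already used in the proof of Lemma~\ref{lem:SSstargenseries}, now for the single index $4$, in the form $\sum_{n\ge 0}\zeta^\star(\{4\}^n)\, y^n = \bigl(\sum_{n\ge 0}(-1)^n \zeta(\{4\}^n)\, y^n\bigr)^{-1}$; this is just the relation $H(t)E(-t)=1$ between the complete-homogeneous and elementary symmetric series in the variables $\{1/m^4\}_{m\ge 1}$, for which $\zeta(\{4\}^n)=e_n$ and $\zeta^\star(\{4\}^n)=h_n$, after the substitution $t\mapsto -t$. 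Setting $y=x/4$ and comparing the coefficients of $x^n$ then yields $S_{1,3}(n)=4^{-n}\zeta^\star(\{4\}^n)$, which is \eqref{for:S13}.

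For part ii) no further analytic input is required: I feed part i) into the convolution identity \eqref{for:SstarS}. Since $a+b=4$ when $(a,b)=(1,3)$, \eqref{for:SstarS} becomes $S^\star_{1,3}(n)=\sum_{k=0}^{n} S_{1,3}(k)\,\zeta(\{4\}^{n-k})$, and substituting the value $S_{1,3}(k)=4^{-k}\zeta^\star(\{4\}^k)$ from part i) (which also holds at $k=0$, both sides being $1$) gives precisely \eqref{for:Sstar13}.

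Since the argument is an assembly of ingredients already available in the excerpt, I do not anticipate a genuine obstacle. The only point demanding care is the bookkeeping of the factor $4^{-n}$ and of the sign in the inversion: one must check that \eqref{eq:31formula} converts the elementary-symmetric series of the $\zeta(\{1,3\}^n)$ into that of the $\zeta(\{4\}^n)$ with argument rescaled by $1/4$, so that its reciprocal is exactly the complete-homogeneous (zeta-star) series evaluated at $x/4$.
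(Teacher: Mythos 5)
Your proof is correct, but for part i) it takes a genuinely different and more elementary route than the paper. The paper never invokes \eqref{eq:31formula}: instead it starts from the Borwein--Bradley--Broadhurst--Lison\v{e}k generating series \eqref{eq:geneL13} for the multiple polylogarithms $L(\{1,3\}^n;x)$, specializes $x=1$, $t=(1+i)y$, applies Gauss's ${}_2F_1$ summation to obtain $\sum_{n\ge 0}(-4)^n\zeta(\{1,3\}^n)y^{4n} = \bigl(\Gamma(1-y)\Gamma(1+y)\Gamma(1-iy)\Gamma(1+iy)\bigr)^{-1}$, inverts this via the first identity of Lemma \ref{lem:SSstargenseries}, and then evaluates the Gamma product as $\frac{\pi x}{\sin(\pi x)}\frac{\pi x}{\sinh(\pi x)} = \prod_{m>0}\bigl(1-\frac{x^4}{m^4}\bigr)^{-1} = \sum_{n\geq 0}\zeta^\star(\{4\}^n)x^{4n}$. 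You instead take the BBB evaluation $\zeta(\{1,3\}^n)=4^{-n}\zeta(\{4\}^n)$ as a black box and combine it with the same inversion lemma plus the elementary $H(t)E(-t)=1$ identity for the single index $4$; your sign and rescaling bookkeeping is right, and the $k=0$ term in part ii) is handled correctly. What each approach buys: yours is shorter and needs no hypergeometric or Gamma-function machinery, at the price of importing the nontrivial result of \cite{BBB} (which the paper's computation in effect re-proves rather than assumes); the paper's route, on the other hand, sets up exactly the hypergeometric apparatus---the generating function \eqref{eq:geneL13} at general argument $x$, not just $x=1$---that is indispensable later in the proof of Theorem \ref{thm:AB13}, where your shortcut would not suffice. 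Part ii) is identical in both arguments: substitute part i) into \eqref{for:SstarS}.
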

\begin{proof}
 It is easy to see that \eqref{for:Sstar13} is derived from \eqref{for:S13} together with \eqref{for:SstarS}.
 Hence we just need to prove \eqref{for:S13}.  For $k_1,\dots,k_r \geq 1$ define a multiple polylogarithm by 
\[ L(k_1,\dots,k_r;x) = \sum_{0 < m_1 < \dots < m_r} \frac{x^{m_r}}{m_1^{k_1} \dots m_r^{k_r} }\,.\] 
From \cite[Theorem~11.1]{BBBL}, where a different order of summation is used, it is known that 
\begin{align}\label{eq:geneL13} \sum_{n\ge 0} L(\{1,3\}^n;x) t^{4n} = \hg{\frac{t}{2}(1+i)}{-\frac{t}{2}(1+i)}{1}{x}  \hg{\frac{t}{2}(1-i)}{-\frac{t}{2}(1-i)}{1}{x} \,,
\end{align}
where $\hg{a}{b}{c}{x} = \sum_{n\geq 0} \frac{(a)_n (b)_n}{(c)_n} \frac{x^n}{n!} $ is the usual Gauss hypergeometric function. In particular setting $x=1$, $t=(1+i)y$ and using the well-known formula
\begin{equation*}
\hg{a}{b}{c}{1} = \frac{\Gamma(c) \Gamma(c-a-b)}{\Gamma(c-a) \Gamma(c-b)} \,
\end{equation*}
with $\Gamma(x)$ being the gamma function,
we obtain  
\begin{equation}\label{eq:gen4}
 \sum_{n\ge 0} (-4)^n \zeta(\{1,3\}^n) y^{4n}  =  \frac{1}{\Gamma(1-y) \Gamma(1+y) \Gamma(1-iy) \Gamma(1+iy) } \,. 
\end{equation} 
 Notice that $\zeta(\{1,3\}^n)= L(\{1,3\}^n;1)$.
 Therefore, letting $(a,b)=(1,3)$ and replacing $x$ by $4x^4$ in the first equation of \eqref{for:gf_of_SSstar}
 we see that by \eqref{eq:gen4}
\begin{align}
\label{for:Sgene}
\begin{split}
 \sum_{n\geq 0} 4^n S_{1,3}(n) x^{4n}
 & = \left( \sum_{n\geq 0} (-4)^n \zeta(\{1,3\}^n) x^{4n}\right)^{-1}  = \Gamma(1-x) \Gamma(1+x) \Gamma(1-ix) \Gamma(1+ix)\\
 & = \frac{\pi x}{\sin(\pi x)}\frac{\pi x}{\sinh(\pi x)} 
  = \prod_{m>0} \frac{1}{1 - \frac{x^4}{m^4}}  =   \sum_{n\geq 0}  \zeta^\star(\{4\}^n) x^{4n} \,.
  \end{split}
\end{align}
\end{proof}

\begin{thm}
\label{thm:AB13}
\begin{enumerate}[i)]
\item
 For $n\ge 1$, we have  
\begin{align}
\label{for:A13}
\ytableausetup{mathmode,boxsize=1.2em,aligntableaux=center} 
 A_{1,3}(n)
=\zeta
\left(
\ {\footnotesize
\begin{ytableau}
 \none & \none    & \none  & 1        \\
 \none & \none    & \adots & *(gray)3 \\
 \none & 1        & \adots \\
 1     & *(gray)3  
\end{ytableau}}
\ \right)
=\frac{2}{4^n} \zeta(4n+1)\,.
\end{align} 
\item
 For $n\ge 0$, we have   
\begin{align}
\label{for:B13}
\ytableausetup{mathmode,boxsize=1.2em,aligntableaux=center} 
 B_{1,3}(n)
=\zeta
\left(
\ {\footnotesize
\begin{ytableau}
 \none   & \none  & 1        & *(gray)3 \\
 \none   & \adots & *(gray)3 \\
 1       & \adots \\
 *(gray)3
\end{ytableau}}
\ \right) 
=\frac{1}{4^{n}} \zeta(4n+3)\,.
\end{align} 
\end{enumerate} 
\end{thm}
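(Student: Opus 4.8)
The plan is to combine the recursion formulas of Lemma~\ref{lem:ABrec} with the generating series of Lemma~\ref{lem:SSstargenseries} and the explicit value $S_{1,3}(n)=4^{-n}\zeta^\star(\{4\}^n)$ of Theorem~\ref{thm:SSstar}, so as to reduce each of \eqref{for:A13} and \eqref{for:B13} to a single generating-function identity for an explicit family of multiple zeta values, and then to prove that identity via the multiple-polylogarithm generating series \eqref{eq:geneL13}. I carry out \eqref{for:B13} in detail; \eqref{for:A13} runs along the same lines.

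First I would read \eqref{for:Brec} as the convolution $\sum_{k=0}^n(-1)^{n-k}B(k)\zeta(\{a,b\}^{n-k})=(-1)^n\zeta(b,\{a,b\}^n)$, valid for all $n\ge 0$ (the case $n=0$ being $B(0)=\zeta(b)$). In terms of ordinary generating series in a variable $u$ this reads
\[
\Big(\sum_{k\ge 0}B(k)u^k\Big)\Big(\sum_{j\ge 0}(-1)^j\zeta(\{a,b\}^j)u^j\Big)=\sum_{n\ge 0}(-1)^n\zeta(b,\{a,b\}^n)u^n ,
\]
and by \eqref{for:gf_of_SSstar} the middle factor equals $\big(\sum_j S(j)u^j\big)^{-1}$. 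Specializing $(a,b)=(1,3)$, substituting $u=4x^4$, and inserting $S_{1,3}(n)=4^{-n}\zeta^\star(\{4\}^n)$ from \eqref{for:S13} (so that $\sum_n S_{1,3}(n)(4x^4)^n=\sum_n\zeta^\star(\{4\}^n)x^{4n}=\tfrac{(\pi x)^2}{\sin(\pi x)\sinh(\pi x)}$ as in \eqref{for:Sgene}), the identity becomes
\[
\sum_{n\ge 0}4^nB_{1,3}(n)\,x^{4n}=\frac{(\pi x)^2}{\sin(\pi x)\sinh(\pi x)}\sum_{n\ge 0}(-4)^n\zeta(3,\{1,3\}^n)\,x^{4n}.
\]
Since $\sum_{n\ge 0}\zeta(4n+3)x^{4n}=\sum_{m\ge 1}\tfrac{m}{m^4-x^4}$ and $\tfrac{\sin(\pi x)\sinh(\pi x)}{(\pi x)^2}=\prod_{m\ge 1}\big(1-\tfrac{x^4}{m^4}\big)$, the assertion \eqref{for:B13} is equivalent to the single closed-form identity
\[
\sum_{n\ge 0}(-4)^n\zeta(3,\{1,3\}^n)\,x^{4n}=\sum_{m\ge 1}\frac{1}{m^3}\prod_{k\ne m}\Big(1-\frac{x^4}{k^4}\Big).
\]

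The remaining and essential step is to compute the left-hand generating series from \eqref{eq:geneL13}. Using the palindromicity of the index I would write $\zeta(3,\{1,3\}^n)=\zeta(\{3,1\}^n,3)=L(\{3,1\}^n,3;1)$ and study the series $\Psi(z)=\sum_{n\ge 0}L(\{3,1\}^n,3;z)\,t^{4n}$ in the variable $z$ of the multiple polylogarithm. With $D_0=z\frac{d}{dz}$ and $D_1=(1-z)\frac{d}{dz}$, the integral operator inverting one $(3,1)$-block is $D_1^2D_0^2$; since $D_1^2D_0^2\,\mathrm{Li}_3(z)=0$, both the series $\Phi(z)=\sum_n L(\{1,3\}^n;z)t^{4n}$ of \eqref{eq:geneL13} and $\Psi(z)$ solve the \emph{same} fourth-order equation $D_1^2D_0^2Y=t^4Y$, and they differ only through their behaviour at $z=0$: one has $\Phi=1+O(t^4)$, while $\Psi=\mathrm{Li}_3(z)+O(t^4)$ vanishes to first order at $z=0$. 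Thus $\Psi$ is the unique (up to scale) solution that is analytic and vanishing at $z=0$; I would determine it as the combination of the four hypergeometric solutions underlying \eqref{eq:geneL13} selected by this boundary condition, and then evaluate $\Psi(1)=\sum_n\zeta(3,\{1,3\}^n)t^{4n}$ at $z=1$ with $t=(1+i)x$ (so that $t^{4n}=(-4)^n x^{4n}$) via Gauss's theorem and the connection formulas for ${}_2F_1$ at $1$.

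The main obstacle is precisely this last computation: pinning down the combination of hypergeometric solutions that represents $\Psi$ and carrying out its evaluation at the singular point $z=1$. Because the relevant ${}_2F_1$'s have parameter $c=1$, their second solutions are logarithmic, so the connection coefficients must be tracked with care; this is the technical heart of the argument. Once $\Psi(1)$ is in closed form, matching it against the Weierstrass-product/partial-fraction expression above completes \eqref{for:B13}. Finally, \eqref{for:A13} follows by the same scheme from \eqref{for:Arec}: the identical reduction gives $\sum_n 4^nA_{1,3}(n)x^{4n}=\tfrac{(\pi x)^2}{\sin(\pi x)\sinh(\pi x)}\sum_{n\ge 1}(-1)^{n-1}4^nL_{1,3}(n)x^{4n}$, and after expressing $L_{1,3}(n)$ through multiple polylogarithms by the Jacobi--Trudi formula \eqref{for:JTzetastar} one evaluates the resulting series from \eqref{eq:geneL13} and recognizes the quotient as $2\sum_{n\ge 1}\zeta(4n+1)x^{4n}=2x^4\sum_{m\ge 1}\tfrac{1}{m(m^4-x^4)}$.
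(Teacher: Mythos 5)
Your reductions in both parts are correct, but in both parts the analytic core of the theorem is missing, so the proposal has a genuine gap. For \eqref{for:B13}, your generating-function rewriting of \eqref{for:Brec} is sound, and the identity you arrive at,
\[
\sum_{n\ge 0}(-4)^n\zeta(3,\{1,3\}^n)\,x^{4n}=\sum_{m\ge 1}\frac{1}{m^3}\prod_{k\ne m}\Bigl(1-\frac{x^4}{k^4}\Bigr),
\]
is precisely the generating-function form of the Bowman--Bradley evaluation $\zeta(3,\{1,3\}^n)=\frac{1}{4^n}\sum_{k=0}^n(-1)^k\zeta(4k+3)\zeta(\{4\}^{n-k})$, i.e.\ of \cite[Theorem~1]{BowmanBradley2003}. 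The paper simply cites that theorem and then runs the recursion \eqref{for:Brec} by induction; you instead propose to reprove it from scratch by identifying $\Psi(z)=\sum_n L(\{3,1\}^n,3;z)\,t^{4n}$ inside the solution space of $D_1^2D_0^2Y=t^4Y$ and solving the connection problem at $z=1$. That is exactly the step you yourself flag as undone ("the technical heart"), and it is genuinely nontrivial: a fourth-order equation whose local solutions at $z=1$ and $z=0$ involve logarithms because the relevant ${}_2F_1$'s have $c=1$, and whose analytic-vanishing-at-$0$ subspace you assert, but do not verify, is one-dimensional. As it stands part (ii) is therefore not proved, although it could be closed at once by citing Bowman--Bradley, which is what the paper does.

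For \eqref{for:A13} the gap is more serious. Your reduction shows that \eqref{for:A13} is equivalent to an independent evaluation of $\sum_{n\ge1}(-1)^{n-1}4^nL_{1,3}(n)x^{4n}$, i.e.\ of the values $L_{1,3}(n)$; but that evaluation is \eqref{for:X13} of Corollary~\ref{cor:XXsYYs}, which the paper deduces \emph{from} Theorem~\ref{thm:AB13}, so you must prove it directly, and your proposed tool fails: neither Jacobi--Trudi formula applies to $L_{1,3}(n)$, since its odd rows end in the entry $1$ (ruling out \eqref{for:JTzetastar}) and its first column ends in the entry $1$ (ruling out \eqref{for:JTzeta}). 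Moreover, even granting some polylogarithm expression, "evaluating the resulting series from \eqref{eq:geneL13}" is precisely where all the work lies: the extra entry $1$ produces the divergent truncated sums $\zeta_k(\{1,3\}^n,1)$, and the paper has to introduce the truncated values $A(n;k)$, a two-variable generating function, a contiguity identity for the hypergeometric factors, and Ramanujan's expansion \eqref{eq:asymptF} in order to extract the finite part of a $\log(1-x)$ singularity in the limit $x\to1$. Nothing in your sketch performs this limit or any substitute for it, so part (i) is an outline of a (correct) reduction rather than a proof.
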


\begin{proof}
 We first show \eqref{for:B13}.
 Since 
\begin{align*}
\zeta(3,\{1,3\}^n)
=\frac{1}{4^n}\sum^{n}_{k=0}(-1)^k\zeta(4k+3)\zeta(\{4\}^{n-k})
=\sum^{n}_{k=0}\left(-\frac{1}{4}\right)^k\zeta(4k+3)\zeta(\{1,3\}^{n-k}) \,,
\end{align*}
 where the first equality is obtained in \cite[Theorem~1]{BowmanBradley2003}
 and the second one by the well known identity $\zeta(\{4\}^n)=4^n\zeta(\{1,3\}^n)$, 
 we have from \eqref{for:Brec}  
\begin{align*}
 B_{1,3}(n)
&=(-1)^{n} \zeta(3,\{1,3\}^n)-\sum^{n-1}_{k=0}(-1)^{n-k}B_{1,3}(k) \zeta(\{1,3\}^{n-k})\\
&=\frac{1}{4^n}\zeta(4n+3)-\sum^{n-1}_{k=0}(-1)^{n-k}\left\{B_{1,3}(k)-\frac{1}{4^k}\zeta(4k+3)\right\} \zeta(\{1,3\}^{n-k}) \,.
\end{align*} 
 This shows the desired equation by induction on $n$.
 
 We next show \eqref{for:A13}.
 For $n\geq 0$ and $k \geq 0$, define the truncated version of $A(n)$ by 
\[A(n;k) =  \sum_* \frac{1}{p_0 q_1^3 p_1 \dots q_n^3 p_n} \,,\]
where the sum runs over all $0 < p_0 < k$ and $0 < p_{j-1} \leq q_j > p_j$ for $j=1,\dots,n$.
Define the ($4^n$-weighted) generating function of $A(n;k)$ by
\begin{align*}
G(x,y) 
= \sum_{\substack{n\geq 0\\ k\geq 1}} 4^{n} A(n;k) \, x^k y^{4n} \,.
\end{align*}
Since we are only interested in $A(n;k)$ for $n\ge 1$,
we further put  
\begin{align*}
 \tilde{G}(x,y)=G(x,y)-G(x,0) 
= \sum_{\substack{n\geq 1\\ k\geq 1}} 4^{n} A(n;k) \, x^k y^{4n} \,.
\end{align*}
Then, since $\lim_{k\to\infty}A(n;k)=A_{1,3}(n)$ for $n\geq 1$, we have   
\begin{equation}
\label{for:limittildeG}
 \lim_{x\to 1}(x-1)\tilde{G}(x,y)=\sum_{n\ge 1}4^nA_{1,3}(n)y^{4n} \,.
\end{equation}
In the following we will calculate the limit in the left-hand side of \eqref{for:limittildeG}.

For $k\geq 1$ and $k_1,\dots,k_r \geq 1$,
define a truncated version of the multiple zeta value $\zeta(k_1,\ldots,k_r)$ by
\[
 \zeta_k(k_1,\ldots,k_r)=\sum_{0<m_1<\cdots<m_r<k} \frac{1}{m_1^{k_1}\cdots m_r^{k_r}} \,.
\]
With this it is easy to check from \eqref{for:harmonicproduct} again that
\begin{align*}
A(n;k) = \sum_{j=0}^n S_{1,3}(n)  \cdot (-1)^{n-j} \zeta_k(\{1,3\}^{n-j},1) \,.
\end{align*}
Hence, together with \eqref{for:Sgene}, we obtain
\begin{align}
 G(x,y) 
&= \sum_{n\geq 0} 4^{n} S_{1,3}(n) y^{4n} \cdot  \sum_{\substack{n\geq 0\\ k\geq 1}}  (-4)^{n} \zeta_k(\{1,3\}^{n},1) \,x^k y^{4n} \nonumber \\
\label{eq:GH}
&= \Gamma(1-y) \Gamma(1+y) \Gamma(1-iy) \Gamma(1+iy)  \cdot K(x,y)\,,
\end{align}
where 
\[
 K(x,y)=\sum_{\substack{n\geq 0\\ k\geq 1}}  (-4)^{n} \zeta_k(\{1,3\}^{n},1) \,x^k y^{4n} \,.
\]
We observe that
\[  \theta_x^3  L(\{1,3\}^n;x)  = \sum_{k \geq 1} \zeta_k(\{1,3\}^{n-1},1)\, x^k\,,  \]
where $\theta_x = x \frac{d}{dx}$.
This, by the virtue of \eqref{eq:geneL13}, shows 
\begin{align*}
 K(x,y)
& = -\frac{1}{4y^4} \theta_x^3 \,\left(  \hg{y}{-y}{1}{x} \hg{iy}{-iy}{1}{x} \right)\\
& = \frac{1}{2y} \frac{x}{1-x}  \Big( \hg{1-y}{y}{1}{x} \hg{-iy}{iy}{1}{x} \\
& \qquad - i \hg{1-iy}{iy}{1}{x} \hg{-y}{y}{1}{x} +(i-1) \hg{-y}{y}{1}{x} \hg{-iy}{iy}{1}{x}  \Big) \,\\
&=\frac{1}{2} \frac{x^2}{1-x} \Big( \hg{1-y}{1+y}{2}{x} \hg{-iy}{iy}{1}{x} +\hg{1-iy}{1+iy}{2}{x} \hg{-y}{y}{1}{x}  \Big) \,.
\end{align*}
 Notice that the second equality follows from a direct calculation and the last one from the identity 
\[\hg{1-y}{y}{1}{x}-\hg{-y}{y}{1}{x} = xy \hg{1-y}{1+y}{2}{x} \,.\] 
Now, from \eqref{eq:GH} with the above expression of $K(x,y)$, we obtain   
\[ G(x,y) = \frac{x^2}{2(1-x)} \left( F_1(x,y) F_0(x,iy) + F_1(x,iy) F_0(x,y) \right) \,, \]
 where $F_j(x,y) = \Gamma(1-y)\Gamma(1+y) \hg{j-y}{j+y}{j+1}{x}$ for $j=0,1$. 
Therefore, because $G(x,0) = -\frac{x}{(1-x)} \log(1-x)$, we have 
\begin{equation}
\label{for:tildeG}
 \tilde{G}(x,y)= \frac{x}{2(1-x)} \Bigl( xF_1(x,y) F_0(x,iy) + \log(1-x) + xF_1(x,iy) F_0(x,y) +  \log(1-x)  \Bigr) \,.
\end{equation}
With this expression, we can calculate the desired limit.
To do that, let us recall the expansion due to Ramanujan (see for example \cite[Corollary 20]{E});
\begin{equation}\label{eq:asymptF}
\frac{\Gamma(a)\Gamma(b)}{\Gamma(a+b)} \hg{a}{b}{a+b}{x} = -\log(1-x) - 2\gamma -\psi(a) - \psi(b) + O((1-x)\log(1-x))
\end{equation} 
as $x \rightarrow 1$, where
\[\psi(1+y) = \frac{\Gamma'(1+y)}{\Gamma(1+y)} =  -\gamma + \sum_{n\ge 2}(-1)^n \zeta(n) y^{n-1}\]
denotes the digamma function with $\gamma$ being the Euler-Mascheroni constant.
Setting $a=1-y$ and $b=1+y$ in \eqref{eq:asymptF} we obtain
\begin{align*}
F_1(x,y)&=\frac{\Gamma(1-y)\Gamma(1+y)}{\Gamma(2)} \hg{1-y}{1+y}{2}{x} \\
&= -\log(1-x) - H(y) + O\left((1-x)\log(1-x)\right) 
\end{align*}
as $x \rightarrow 1$, where 
\begin{align*}
H(y) &=2 \gamma+ \psi(1-y) + \psi(1+y)  = \sum_{n\ge 2} \left( (-1)^n  - 1 \right) \zeta(n) y^{n-1} = -2 \sum_{n\geq 1} \zeta(2n+1) y^{2n} \,.
\end{align*}
 Therefore, from the expression \eqref{for:tildeG},
 noticing that $F_0(1,y) = 1$, we have
\begin{align*}
(1-x)\tilde{G}(x,y) = -\frac{1}{2}\left( H(y) + H(iy) \right) + O\left((1-x)\log(1-x)\right) 
\end{align*}
as $x \rightarrow 1$, which implies that 
\[
 \lim_{x\to 1}(x-1)\tilde{G}(x,y)=2\sum_{n\ge 1} \zeta(4n+1) y^{4n}
\] 
 because $H(y)+H(iy) = -4 \sum_{n\ge 1} \zeta(4n+1) y^{4n}$.
 This completes the proof.
\end{proof}
 
\begin{cor}
\label{cor:XXsYYs}
\begin{enumerate}[i)]
\item
 For $n\ge 1$, we have  
\begin{align}
\label{for:X13}
\ytableausetup{mathmode,boxsize=1.2em,aligntableaux=center} 
 L_{1,3}(n)
&=\zeta
\left(
\ {\footnotesize
\begin{ytableau}
 \none & 1        \\
 \none & *(gray)3 \\
 \none & \svdots  \\
 \none & 1        \\
 1     & *(gray)3
\end{ytableau}}
\ \right)
= -2 \sum_{k=1}^{n} \left(-\frac{1}{4}\right)^k \zeta(4k+1) \zeta(\{1,3\}^{n-k}) \,,\\
\label{for:Xstar13}
 L^{\star}_{1,3}(n)
&=\zeta
\left(
\ {\footnotesize
\begin{ytableau}
 \none & \none    & \none  & \none & 1 \\
 1     & *(gray)3 & \cdots & 1     & *(gray)3
\end{ytableau}}
\ \right)
= -2 \sum_{k=1}^{n} \left(-\frac{1}{4}\right)^k \zeta(4k+1) \zeta^{\star}(\{1,3\}^{n-k}) \,.
\end{align} 
\item
 For $n\ge 1$, we have  
\begin{align}
\label{for:Y13}
\ytableausetup{mathmode,boxsize=1.2em,aligntableaux=center} 
\zeta(3,\{1,3\}^n)
&=\zeta
\left(
\ {\footnotesize
\begin{ytableau}
 *(gray)3 \\
 1        \\
 *(gray)3 \\
 \svdots  \\
 1        \\
 *(gray)3
\end{ytableau}}
\ \right)
= \sum_{k=0}^{n} \left(-\frac{1}{4}\right)^k \zeta(4k+3) \zeta(\{1,3\}^{n-k}) \,,\\
\label{for:Ystar13}
\zeta^\star(3,\{1,3\}^n)
&=\zeta
\left(
\ {\footnotesize
\begin{ytableau}
 *(gray)3 & 1 & *(gray)3 & \cdots & 1 & *(gray)3
\end{ytableau}}
\ \right)
= \sum_{k=0}^{n} \left(-\frac{1}{4}\right)^k \zeta(4k+3) \zeta^{\star}(\{1,3\}^{n-k}) \,.
\end{align} 
\end{enumerate} 
\end{cor}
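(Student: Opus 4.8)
The plan is to read all four identities directly off the recursions of Lemma~\ref{lem:ABrec}, once the closed forms $A_{1,3}(n)=\frac{2}{4^n}\zeta(4n+1)$ and $B_{1,3}(n)=\frac{1}{4^n}\zeta(4n+3)$ from Theorem~\ref{thm:AB13} are in hand. The key observation is that each recursion contains the relevant ``unknown'' linearly and exactly once: in \eqref{for:Arec} and \eqref{for:Arecstar} the quantities $L(n)$ and $L^{\star}(n)$ stand isolated, while in \eqref{for:Brec} and \eqref{for:Brecstar} the analogous role is played by $\zeta(b,\{a,b\}^n)$ and $\zeta^{\star}(b,\{a,b\}^n)$. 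Since every other term on the right-hand side involves only $A(k)$ or $B(k)$ with $k<n$, substituting the known $(a,b)=(1,3)$ values and rearranging isolates each target. Thus no fresh induction is required; the work is purely algebraic rearrangement.

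I would treat \eqref{for:Y13} and \eqref{for:Ystar13} first. Solving \eqref{for:Brec} for $\zeta(3,\{1,3\}^n)$ and multiplying through by $(-1)^n$ gives, using $(-1)^n(-1)^{n-k}=(-1)^k$,
\[ \zeta(3,\{1,3\}^n)=(-1)^n B_{1,3}(n)+\sum_{k=0}^{n-1}(-1)^{k}B_{1,3}(k)\,\zeta(\{1,3\}^{n-k}) \,. \]
Inserting $B_{1,3}(k)=\frac{1}{4^k}\zeta(4k+3)$ turns the leading term $(-1)^n B_{1,3}(n)$ into the $k=n$ summand $\left(-\frac14\right)^n\zeta(4n+3)$ (recall $\zeta(\{1,3\}^0)=1$), so the two pieces merge into $\sum_{k=0}^n\left(-\frac14\right)^k\zeta(4k+3)\zeta(\{1,3\}^{n-k})$, which is exactly \eqref{for:Y13}; running the identical computation with \eqref{for:Brecstar} gives \eqref{for:Ystar13}. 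One should note that \eqref{for:Y13} in fact already appeared verbatim inside the proof of Theorem~\ref{thm:AB13}.

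The $L$-identities \eqref{for:X13} and \eqref{for:Xstar13} come out the same way from \eqref{for:Arec} and \eqref{for:Arecstar}. Solving \eqref{for:Arec} for $L(n)$ yields $L(n)=(-1)^{n-1}A(n)-\sum_{k=1}^{n-1}(-1)^{k}A(k)\zeta(\{a,b\}^{n-k})$ via $(-1)^{n-1}(-1)^{n-k}=-(-1)^k$, and substituting $A_{1,3}(k)=\frac{2}{4^k}\zeta(4k+1)$ turns the leading term into $-2\left(-\frac14\right)^n\zeta(4n+1)$, the $k=n$ summand, while the remaining sum supplies the terms $1\le k\le n-1$; this is \eqref{for:X13}, and \eqref{for:Xstar13} follows identically from \eqref{for:Arecstar} with $\zeta^{\star}$ in place of $\zeta$. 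The only point needing separate attention is that \eqref{for:Arec} and \eqref{for:Arecstar} are stated only for $n\ge 2$, so the case $n=1$ must be checked by hand; but for $n=1$ the shapes defining $L(1)$, $L^{\star}(1)$ and $A(1)$ all coincide (each is $(2,2)/(1)$), whence $L_{1,3}(1)=L^{\star}_{1,3}(1)=A_{1,3}(1)=\frac12\zeta(5)$, matching both right-hand sides. I expect no genuine obstacle here beyond careful bookkeeping of the signs $(-1)^{n-k}$ and the powers of $\tfrac14$; the substantive analytic content was already expended in proving Theorem~\ref{thm:AB13}.
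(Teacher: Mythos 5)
Your proposal is correct and takes essentially the same route as the paper: \eqref{for:X13}, \eqref{for:Xstar13} and \eqref{for:Ystar13} are obtained by solving the recursions \eqref{for:Arec}, \eqref{for:Arecstar} and \eqref{for:Brecstar} of Lemma \ref{lem:ABrec} after substituting the closed forms of Theorem \ref{thm:AB13}, and your sign bookkeeping and the absorption of the leading term as the $k=n$ summand are both correct. The one divergence is \eqref{for:Y13}: the paper does not derive it at all but cites \cite[Theorem~1]{BowmanBradley2003}, whereas you solve \eqref{for:Brec} using $B_{1,3}(k)=\frac{1}{4^k}\zeta(4k+3)$; this is valid since Theorem \ref{thm:AB13} stands proven, but it is redundant rather than an independent proof, because \eqref{for:Y13} is exactly the Bowman--Bradley formula that the paper feeds into its induction establishing $B_{1,3}(n)=\frac{1}{4^n}\zeta(4n+3)$, so your argument circles back to its own input (as you yourself acknowledge). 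Your explicit treatment of $n=1$, where \eqref{for:Arec} and \eqref{for:Arecstar} do not apply and the shapes defining $L(1)$, $L^{\star}(1)$ and $A(1)$ all coincide as $(2,2)/(1)$, is a point of care that the paper's one-line proof glosses over, and it checks out.
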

\begin{proof}
 The equations \eqref{for:X13}, \eqref{for:Xstar13} and \eqref{for:Ystar13}
 follow from Theorem \ref{thm:AB13} together with \eqref{for:Arec}, \eqref{for:Arecstar} and \eqref{for:Brecstar} in Lemma \ref{lem:ABrec}, respectively.
 Equation \eqref{for:Y13} is obtained in \cite[Theorem~1]{BowmanBradley2003}.
\end{proof}

\begin{cor} For all $n\geq 1$ we have the following identity between multiple zeta values
\begin{align} \label{eq:relofmzv}
\sum_{j=0}^{n-1} \left( \zeta(\{1,3\}^j,1,2,2,\{1,3\}^{n-j-1}) + 3  \zeta(\{1,3\}^j,1,\{1,3\}^{n-j})\right) &= \frac{2}{4^n} \sum_{j=0}^{n-1} \zeta(\{4\}^j,5,\{4\}^{n-j-1}) \,.
\end{align}
\end{cor}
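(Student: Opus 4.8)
The plan is to write each side of \eqref{eq:relofmzv} as the coefficient of $t^{4n}$ in a generating series and to match the two through the hypergeometric identity \eqref{eq:geneL13}; throughout set $P(x,t)=\sum_{n\ge0}L(\{1,3\}^n;x)\,t^{4n}$, where $L(\cdots;x)$ is the multiple polylogarithm from the proof of Theorem~\ref{thm:SSstar}. For the left-hand side I would use the iterated-integral structure of these polylogarithms. In the alphabet $\{\tfrac{dt}{1-t},\tfrac{dt}{t}\}$ the word of $\zeta(\{1,3\}^n)$ is $(1100)^n$, and the shuffle relation for iterated integrals expresses $L(1;x)\,L(\{1,3\}^n;x)$ as the sum over all insertions of a single letter $\tfrac{dt}{1-t}$ into this word. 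A direct count shows that, for each $j$, these insertions produce one copy of the word of $\zeta(\{1,3\}^j,1,2,2,\{1,3\}^{n-j-1})$, three copies of the word of $\zeta(\{1,3\}^j,1,\{1,3\}^{n-j})$, and a single word ending in $\tfrac{dt}{1-t}$ (the only non-convergent one). Since $L(1;x)=-\log(1-x)$, subtracting the divergent term and letting $x\to1$ cancels the logarithmic singularities and yields
\[\sum_{j=0}^{n-1}\Big(\zeta(\{1,3\}^j,1,2,2,\{1,3\}^{n-j-1})+3\,\zeta(\{1,3\}^j,1,\{1,3\}^{n-j})\Big)=\int_0^1\frac{\zeta(\{1,3\}^n)-L(\{1,3\}^n;s)}{1-s}\,ds,\]
so the generating series of the left-hand side of \eqref{eq:relofmzv} is $\int_0^1\frac{P(1,t)-P(s,t)}{1-s}\,ds$.

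Next I would evaluate this integral. By \eqref{eq:geneL13} one has $P(s,t)=f(s)g(s)$ with $f(s)={}_2F_1(\alpha,-\alpha;1;s)$, $g(s)={}_2F_1(\beta,-\beta;1;s)$, $\alpha=\tfrac t2(1+i)$ and $\beta=\tfrac t2(1-i)$. The hypergeometric equation gives $\tfrac{f(s)}{1-s}=-\tfrac1{\alpha^2}\tfrac{d}{ds}(s f'(s))$, and likewise for $g$. The decisive point is that $\alpha^2+\beta^2=0$, so $\tfrac1{\alpha^2}+\tfrac1{\beta^2}=0$, which makes
\[\frac{P(s,t)}{1-s}=-\frac12\,\frac{d}{ds}\!\left(\frac{s\,g(s)f'(s)}{\alpha^2}+\frac{s\,f(s)g'(s)}{\beta^2}\right)\]
an exact derivative. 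Integrating over $[0,1-\epsilon]$ and inserting Ramanujan's expansion \eqref{eq:asymptF} for $f'$ and $g'$ near $s=1$ (these are ${}_2F_1$'s with $c=a+b=2$, precisely the case treated in the proof of Theorem~\ref{thm:AB13}), the $\log\epsilon$ terms cancel against $-P(1,t)\log\epsilon$ and one obtains $\int_0^1\frac{P(1,t)-P(s,t)}{1-s}\,ds=\tfrac12\,P(1,t)\,(H(\alpha)+H(\beta))$.

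For the right-hand side I would argue combinatorially. As $\zeta(\{4\}^n)$ is the $n$-th elementary symmetric function of $\{m^{-4}\}_{m\ge1}$, raising one exponent from $4$ to $5$ gives
\[\sum_{n\ge1}\Big(\sum_{j=0}^{n-1}\zeta(\{4\}^j,5,\{4\}^{n-j-1})\Big)z^{n}=\prod_{m\ge1}\Big(1+\frac{z}{m^{4}}\Big)\cdot\sum_{m\ge1}\frac{z\,m^{-5}}{1+z\,m^{-4}}.\]
Putting $z=t^4/4$ and using $\prod_m\big(1+\tfrac{t^4}{4m^4}\big)=P(1,t)$, which follows from \eqref{eq:31formula}, reduces matters to $\sum_m\frac{t^4}{m(4m^4+t^4)}$; a partial-fraction expansion over the four roots $\pm\alpha,\pm\beta$ of $4m^4+t^4$, using $\sum_{m}\frac1{m(m^2-a^2)}=-\frac{H(a)}{2a^2}$ together with $H(y)=-2\sum_{n\ge1}\zeta(2n+1)y^{2n}$, evaluates it to $\tfrac14(H(\alpha)+H(\beta))$. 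Hence the generating series of the right-hand side, $\sum_{n\ge1}\tfrac2{4^n}\big(\sum_{j}\zeta(\{4\}^j,5,\{4\}^{n-j-1})\big)t^{4n}$, also equals $\tfrac12\,P(1,t)\,(H(\alpha)+H(\beta))$, and comparing the coefficients of $t^{4n}$ in the two series proves \eqref{eq:relofmzv}.

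The step I expect to be hardest is the middle one: recognizing that $\tfrac{P(s,t)}{1-s}$ is an exact $s$-derivative — which hinges on the special relation $\alpha^2+\beta^2=0$ built into \eqref{eq:geneL13} — and organizing the boundary asymptotics through \eqref{eq:asymptF} so that all the logarithmic divergences cancel. By contrast, the insertion count on the left and the symmetric-function/partial-fraction computation on the right are comparatively routine bookkeeping.
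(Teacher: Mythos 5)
Your proof is correct, but it takes a genuinely different route from the paper's. The paper disposes of this corollary in three lines: the right-hand side of \eqref{eq:relofmzv} is, via $\zeta(\{1,3\}^n)=\tfrac{1}{4^n}\zeta(\{4\}^n)$ and the harmonic product, exactly the right-hand side of \eqref{for:X13}, while the left-hand side of \eqref{for:X13} --- the ribbon value $L_{1,3}(n)$ --- expands into the left-hand side of \eqref{eq:relofmzv} by the iterated-integral expression cited from \cite{NakasujiPhuksuwanYamasaki} and \cite{KanekoYamamoto}; so the corollary is a restatement of \eqref{for:X13}, which in turn rests on Theorem \ref{thm:AB13} and the recursion \eqref{for:Arec}. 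You never invoke \eqref{for:X13}, Theorem \ref{thm:AB13}, or Lemma \ref{lem:ABrec}: your shuffle-insertion count reconstructs from scratch the expansion the paper cites (indeed your integral $\int_0^1\frac{\zeta(\{1,3\}^n)-L(\{1,3\}^n;s)}{1-s}\,ds$ \emph{is} $L_{1,3}(n)$, as one sees from $\int_0^1\frac{1-s^m}{1-s}\,ds=\sum_{p\le m}\frac1p$), and your decisive middle step --- that $\alpha^2+\beta^2=0$ makes $\frac{P(s,t)}{1-s}$ an exact $s$-derivative, with the constant term then extracted via \eqref{eq:asymptF} --- replaces the paper's $\theta_x^3$/contiguous-relation computation in the proof of Theorem \ref{thm:AB13} together with the recursion argument behind \eqref{for:X13}; likewise your symmetric-function/partial-fraction evaluation of the right-hand side replaces the harmonic-product reduction. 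The trade-off: the paper's proof is shorter because it reuses machinery already established, whereas yours is self-contained given only the quoted inputs \eqref{eq:geneL13} and \eqref{eq:asymptF}, and it actually yields an independent proof of \eqref{for:X13} (hence, combined with Lemma \ref{lem:ABrec}, an alternative proof of $A_{1,3}(n)=\frac{2}{4^n}\zeta(4n+1)$). I checked the points where errors could hide --- the multiplicity count (one $(1,2,2)$-type word; three $(1)$-type words, namely one insertion between blocks and the two insertions producing the block $00111$; one divergent word), the ODE identity $\frac{f(s)}{1-s}=-\frac{1}{\alpha^2}\frac{d}{ds}\bigl(sf'(s)\bigr)$, the cancellation of the $\log\epsilon$ terms against Gauss's evaluation $f(1)g(1)=P(1,t)$, and the identity $\sum_{m\ge1}\frac{t^4}{m(4m^4+t^4)}=\frac14\bigl(H(\alpha)+H(\beta)\bigr)$ --- and all are sound.
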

\begin{proof} Using $\zeta(\{1,3\}^n) = \frac{1}{4^n} \zeta(\{4\}^n)$ and the usual harmonic product of multiple zeta values it is easy to see that the right-hand side of \eqref{eq:relofmzv} is exactly the right-hand side of \eqref{for:X13}. The left-hand side of \eqref{for:X13} can be written in terms of multiple zeta values by using the iterated integral expression in \cite[(6.3)]{NakasujiPhuksuwanYamasaki} or \cite{KanekoYamamoto} (see also Remark \ref{rem:integral} below). Writing these in terms of multiple zeta values gives the left-hand side of \eqref{eq:relofmzv}. 
\end{proof}

\subsection{Hook types}

 We first consider the case where $\lambda$ is a hook $(m,1^n)$ with $m,n$ being non negative integers with $m\ge 1$.
 Notice that $(m,1^{n})$ with $n\geq 1$ is checkerboardable if and only if $m\not\equiv n$ $\pmod{2}$ and therefore there are just two different types of these checkerboardable hooks: $(\text{even},  1^\text{odd})$ and $(\text{odd}, 1^\text{even})$. For example, 
\begin{equation}
\label{eq:A12B12}
\ytableausetup{mathmode,boxsize=1.1em,aligntableaux=center} 
 \zeta(a,b;(4,1^3))
=
\zeta
\left(
\ 
\begin{ytableau}
 a & *(gray)b & a & *(gray)b \\
 *(gray)b \\
 a \\
 *(gray)b 
\end{ytableau}
\ \right)
, \quad 
 \zeta(a,b;(3,1^4))
=
\zeta
\left(
\ 
\begin{ytableau}
 *(gray) b & a & *(gray) b \\
 a \\
 *(gray)b \\
 a \\
 *(gray)b 
\end{ytableau}
\ \right)
\,.
\end{equation}

Recall that by Lemma \ref{lem:ABrec} we have
\begin{align}\label{eq:zbabzsbab}\begin{split}
\zeta^\star(b,\{a,b\}^n) &= \sum_{k=0}^n (-1)^{k} B_{a,b}(k) \zeta^\star(\{a,b\}^{n-k}) \,. \\
\zeta(b,\{a,b\}^n) &= \sum_{k=0}^n (-1)^{k} B_{a,b}(k) \zeta(\{a,b\}^{n-k})\,, \\
\end{split}
\end{align}

The following Proposition states an analogue result of \eqref{eq:zbabzsbab} for hooks, which can be seen as an interpolation between $\zeta^\star(b,\{a,b\}^n)$ and $\zeta(b,\{a,b\}^n)$. 

\begin{prop} \label{prop:hook}
\begin{enumerate}[i)]
\item
 For $p\ge 0$ and $q\ge 1$ we have 
\begin{equation*}
\zeta(a,b;(2p+2,1^{2q-1})  = - \sum_{\substack{0 \leq k_1 \leq p \\[2pt]1 \leq k_2 \leq q}} (-1)^{k_1+k_2} B_{a,b}(k_1+k_2) \zeta^\star(\{a,b\}^{p-k_1}) \zeta(\{a,b\}^{q-k_2}).
\end{equation*}
\item
 For $p,q\ge 0$ we have 
\begin{equation*}
\zeta(a,b;(2p+1,1^{2q}))= \sum_{\substack{0 \leq k_1 \leq p \\[2pt] 0 \leq k_2 \leq q}} (-1)^{k_1+k_2} B_{a,b}(k_1+k_2) \zeta^\star(\{a,b\}^{p-k_1}) \zeta(\{a,b\}^{q-k_2}).
\end{equation*}
\end{enumerate}
\end{prop}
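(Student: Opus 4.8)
The plan is to reduce every checkerboard hook to a harmonic product of one single row and one single column, and then to package the resulting relations into generating functions that can be solved explicitly. Write $h(m,n)=\zeta(a,b;(m,1^n))$ for the checkerboard hook value. Gluing a single column $C$ and a single row $R$ via \eqref{for:harmonicproduct} produces exactly two hooks: $C\oplus_{\mathrm{h}}R$ attaches $R$ to the right of the top cell of $C$, so that the arm is the corner of $C$ followed by $R$, while $C\oplus_{\mathrm{v}}R$ places $R$ in the top row with $C$ hanging below the first cell of $R$. First I would verify that the checkerboard colourings agree on both summands (both pieces $C,R$ are admissible, and by \eqref{eq:columrow} a single row equals a $\zeta^\star$ and a single column a $\zeta$); this forces the two fundamental identities
\[\zeta(\{a,b\}^q)\,\zeta^\star(b,\{a,b\}^p)=h(2p+2,2q-1)+h(2p+1,2q)\qquad(p\ge0,\ q\ge1),\]
\[\zeta^\star(\{a,b\}^p)\,\zeta(b,\{a,b\}^q)=h(2p+1,2q)+h(2p,2q+1)\qquad(p\ge1,\ q\ge0),\]
where in the first $\zeta(C)=\zeta(\{a,b\}^q)$ (height $2q$) and $\zeta(R)=\zeta^\star(b,\{a,b\}^p)$, and in the second $\zeta(C)=\zeta(b,\{a,b\}^q)$ (height $2q+1$) and $\zeta(R)=\zeta^\star(\{a,b\}^p)$. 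The point is that the two families of the Proposition, $(2p+2,1^{2q-1})$ in i) and $(2p+1,1^{2q})$ in ii), are genuinely coupled by these relations, so they must be solved simultaneously.

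Next I would introduce the generating series $\mathcal Z(x)=\sum_{n\ge0}\zeta(\{a,b\}^n)x^n$, $\mathcal Z^{\star}(x)=\sum_{n\ge0}\zeta^{\star}(\{a,b\}^n)x^n$ and $\mathcal B(x)=\sum_{n\ge0}(-1)^n B_{a,b}(n)x^n$. The relations \eqref{eq:zbabzsbab} of Lemma~\ref{lem:ABrec} say precisely that $\sum_n\zeta(b,\{a,b\}^n)x^n=\mathcal B(x)\mathcal Z(x)$ and $\sum_n\zeta^{\star}(b,\{a,b\}^n)x^n=\mathcal B(x)\mathcal Z^{\star}(x)$. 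Setting $H_{\mathrm I}(X,Y)=\sum_{p\ge0,\,q\ge1}h(2p+2,2q-1)X^pY^q$ and $H_{\mathrm{II}}(X,Y)=\sum_{p,q\ge0}h(2p+1,2q)X^pY^q$ and summing the two fundamental identities against $X^pY^q$, while accounting for the boundary rows $h(2p+1,0)=\zeta^{\star}(b,\{a,b\}^p)$ and columns $h(1,2q)=\zeta(b,\{a,b\}^q)$, converts them into the linear system
\[H_{\mathrm I}+H_{\mathrm{II}}=\mathcal B(X)\mathcal Z^{\star}(X)\mathcal Z(Y),\qquad \frac{X}{Y}\,H_{\mathrm I}+H_{\mathrm{II}}=\mathcal B(Y)\mathcal Z(Y)\mathcal Z^{\star}(X).\]
Solving this $2\times2$ system gives $H_{\mathrm{II}}=\mathcal Z^{\star}(X)\mathcal Z(Y)\,\dfrac{X\mathcal B(X)-Y\mathcal B(Y)}{X-Y}$ and $H_{\mathrm I}=\mathcal Z^{\star}(X)\mathcal Z(Y)\,\dfrac{Y(\mathcal B(X)-\mathcal B(Y))}{Y-X}$.

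Finally I would read off coefficients. The factor $\frac{X\mathcal B(X)-Y\mathcal B(Y)}{X-Y}$ expands as $\sum_{k_1,k_2\ge0}(-1)^{k_1+k_2}B_{a,b}(k_1+k_2)X^{k_1}Y^{k_2}$, and $\frac{Y(\mathcal B(X)-\mathcal B(Y))}{Y-X}$ as $-\sum_{k_1\ge0,\,k_2\ge1}(-1)^{k_1+k_2}B_{a,b}(k_1+k_2)X^{k_1}Y^{k_2}$ (both are genuine power series, the apparent pole along $X=Y$ cancelling). Multiplying by $\mathcal Z^{\star}(X)\mathcal Z(Y)$ and extracting the coefficient of $X^pY^q$ then yields exactly the two claimed double sums, the global minus sign in i) being the one displayed above. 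I expect the main obstacle to be the very first step: checking that the two tableaux produced by $\oplus_{\mathrm{h}}$ and $\oplus_{\mathrm{v}}$ really are the checkerboard hooks with the correct and mutually consistent $a/b$ pattern (in particular at the shared corner), and handling the boundary terms when one of $C$, $R$ degenerates. Once the two fundamental identities are secured, the remaining generating-function algebra is routine.
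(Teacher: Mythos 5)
Your proof is correct, and while it is assembled from the same two ingredients as the paper's proof --- the harmonic-product gluing \eqref{for:harmonicproduct} of a single column against a single row, and the relations \eqref{eq:zbabzsbab} expressing $\zeta(b,\{a,b\}^n)$ and $\zeta^\star(b,\{a,b\}^n)$ through the $B_{a,b}(k)$ --- it resolves them by a genuinely different mechanism. Writing $h(m,n)=\zeta(a,b;(m,1^n))$, the paper works with the single recursion $h(2p+2,1^{2q-1})-h(2p,1^{2q+1})=\zeta(\{a,b\}^q)\zeta^\star(b,\{a,b\}^p)-\zeta^\star(\{a,b\}^p)\zeta(b,\{a,b\}^q)$, which is exactly the difference of your two fundamental identities; it telescopes along $(p,q)\mapsto(p-1,q+1)$ down to the single-column boundary case, substitutes \eqref{eq:zbabzsbab}, reindexes the double sums, and proves only part i), asserting that ii) is similar. (Your identities place the stars correctly --- a column contributes a $\zeta$, a row a $\zeta^\star$ --- whereas the recursion as printed in the paper has the $\star$'s interchanged, and the lower limit of its sum $A_2$ should be $j=q$ rather than $j=p$; these are typos in the paper's write-up which your derivation avoids, and I verified your two identities directly against the gluing definitions, including the corner entries being $b$, which was the step you flagged as the main risk.) You instead keep both gluing identities, sum them into a $2\times 2$ linear system for the bivariate generating functions $H_{\mathrm I},H_{\mathrm{II}}$ --- with the boundary series $h(2p+1,0)=\zeta^\star(b,\{a,b\}^p)$ and $h(1,2q)=\zeta(b,\{a,b\}^q)$ correctly absorbed, and both system equations check out --- solve it, and extract coefficients from $\bigl(X\mathcal B(X)-Y\mathcal B(Y)\bigr)/(X-Y)$ and $Y\bigl(\mathcal B(X)-\mathcal B(Y)\bigr)/(Y-X)$, which are honest power series since the numerators vanish on $X=Y$. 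What your route buys: parts i) and ii) come out simultaneously and uniformly, the coupling between the two hook families is structural rather than hidden in index shifts, and all telescoping and sign bookkeeping is automatic. What the paper's route buys: it is more elementary, needing only one recursion and finite sums, with no generating-function formalism.
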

\begin{proof}
 We only show the first assertion, since the proof for the second one is similar.
By using the harmonic product \eqref{for:harmonicproduct}, one obtains the following recursion formula
\[\zeta(a,b; (2p+2,1^{2q-1} )) - \zeta(a,b; (2p, 1^{2q+1})) = \zeta^\star(\{a,b\}^q) \zeta(b,\{a,b\}^p) - \zeta(\{a,b\}^p) \zeta^\star(b,\{a,b\}^q)\,. \]
From this we deduce
 \[\zeta(a,b;(2p+2,1^{2q-1}))
=\sum^{p}_{j=0}\zeta\left(\{a,b\}^{p+q-j}\right)\zeta^{\star}\left(b,\{a,b\}^{j}\right)
-\sum^{p+q}_{j=p}\zeta^{\star}\left(\{a,b\}^{p+q-j}\right)\zeta\left(b,\{a,b\}^{j}\right) =: A_1 - A_2 \,.\]
Using the first equation in $\eqref{eq:zbabzsbab}$ for $n=j$ and making the variable change $k_1 = p+q-j$, $k_2 = n-k$ we obtain for $A_1$
\[A_1 = \sum^{p+q}_{k_2=q}\sum^{p+q-k_2}_{k_1=0} (-1)^{p+q-k_1-k_2} B_{a,b}(p+q-k_1-k_2) \zeta^\star(\{a,b\}^{k_1}) \zeta(\{a,b\}^{k_2}) \,.\]
For $A_2$ we obtain with the first equation in $\eqref{eq:zbabzsbab}$ for $n=p+j$ by setting $k_2 = q-1-j$ and $k_1 = p+j-k$
\[A_2 = \left(\sum^{q-1}_{k_2=0}\sum^{p}_{k_1=0}+\sum^{p+q}_{k_2=q}\sum^{p+q-k_2}_{k_1=0}\right)
 (-1)^{p+q-k_1-k_2} B_{a,b}(p+q-k_1-k_2) \zeta^\star(\{a,b\}^{k_1}) \zeta(\{a,b\}^{k_2}) \,.\]
And therefore $A_1-A_2$ gives the desired identity for $\zeta(a,b;(2p+2,1^{2q-1}))$ after changing $k_1$ to $p-k_1$ and $k_2$ to $q-k_2$.
\end{proof}
In the special case $(a,b)=(1,3)$ we can use Theorem \ref{thm:AB13} ii), i.e. $B_{1,3}(n)
=\frac{1}{4^n}\zeta(4n+3)$, to get
\begin{align*}
\zeta(1,3;(2p+2,1^{2q-1}) )  &= \sum_{\substack{0 \leq k_1 \leq p\\[2pt] 1 \leq k_2 \leq q}} \left(-\frac{1}{4}\right)^{k_1+k_2} \zeta(4(k_1+k_2)+3) \zeta^\star(\{1,3\}^{p-k_1}) \zeta(\{1,3\}^{q-k_2}),\\
\zeta(1,3;(2p+1,1^{2q})) &= - \sum_{\substack{0 \leq k_1 \leq p\\[2pt] 0 \leq k_2 \leq q}} \left(-\frac{1}{4}\right)^{k_1+k_2} \zeta(4(k_1+k_2)+3) \zeta^\star(\{1,3\}^{p-k_1}) \zeta(\{1,3\}^{q-k_2}).
\end{align*}

\begin{example}
By using the explicit evaluating of $\zeta^{\star}(\{1,3\}^n)$  in \cite[Theorem~B]{Muneta2008}, we get
\begin{align*}
\zeta(1,3;(4,1^{3}))
=
\zeta
\left(
\ 
\begin{ytableau}
 1 & *(gray)3 & 1 & *(gray)3 \\
 *(gray)3 \\
 1 \\
 *(gray)3 
\end{ytableau}
\ \right)
&=\frac{5}{64} \zeta(7) \zeta(4)^2-\frac{3}{32} \zeta(11) \zeta(4)+\frac{1}{64}\zeta(15),\\
\zeta(1,3;(3,1^{4}))
=
\zeta
\left(
\ 
\begin{ytableau}
 *(gray)3 & 1 & *(gray) 3 \\
 1 \\
 *(gray)3 \\
 1 \\
 *(gray)3 
\end{ytableau}
\ \right)
&=\frac{5}{896} \zeta(3) \zeta(4)^3-\frac{71}{896} \zeta(7) \zeta(4)^2+\frac{3}{32} \zeta(11) \zeta(4)-\frac{1}{64}\zeta(15).
\end{align*}
\end{example}
\subsection{Anti-hook types}
 An anti-hook is a skew Young diagram obtained by rotating a hook by $180^\circ$.
Anti-hook skew Young diagrams are of the form $(m^{n+1})/((m-1)^n)$ for some natural numbers $m\geq 2, n\geq 1$. Notice that they are always checkerboardable for any such $m,n$, since they just have one corner whose entry can be set to be $b$. We now study the checkerboard style Schur multiple zeta values of anti-hook types. 
 For example, 
\begin{equation*}
\ytableausetup{mathmode,boxsize=1.1em,aligntableaux=center} 
 \zeta(a,b;(3^5)/(2^4))
=
\zeta
\left(
\ 
\begin{ytableau}
 \none    & \none & *(gray)b \\
 \none    & \none & a \\
 \none    & \none & *(gray)b \\
 \none    & \none & a \\
 *(gray)b & a     & *(gray)b
\end{ytableau}
\ \right)
, \quad 
 \zeta(a,b;(4^3)/(3^2))
=
\zeta
\left(
\ 
\begin{ytableau}
 \none & \none    & \none & *(gray)b \\
 \none & \none    & \none & a \\
 a     & *(gray)b & a     & *(gray)b 
\end{ytableau}
\ \right)
\,.
\end{equation*}

We will now show, that the checkerboard style Schur multiple zeta values of anti-hook types
 can be written in terms of the ribbon types $A,B,S$ and $S^\star$, defined at the beginning of Section \ref{sec:primrib}.

 \begin{prop} \label{prop:antihook}
 \begin{enumerate}[i)]
 \item For $p\geq1$ and $q\geq 0$ we have  
 \begin{align*}
  \zeta(a,b;((2p)^{2q+1})/((2p-1)^{2q}))
 &=
 \sum_{\substack{1 \leq k_1 \leq p \\[2pt]1 \leq k_2 \leq q}} (-1)^{k_1+k_2}A_{a,b}(k_1+k_2-1)
 \zeta^{\star}\left(\{a,b\}^{p-k_1}\right)\zeta\left(b,\{a,b\}^{q-k_2}\right) \\
 &\ \ \ -\sum^{p}_{k_1=1}(-1)^{q+k_1} S_{a,b}^{\star}\left(q+k_1\right) \zeta^{\star}\left(\{a,b\}^{p-k_1}\right).
 \end{align*}

 \item For $p, q\geq1$ we have  
 \begin{align*}
  \zeta(a,b;((2p)^{2q})/((2p-1)^{2q-1}))
 &= 
  \sum_{\substack{1 \leq k_1 \leq p \\[2pt]1 \leq k_2 \leq q}} (-1)^{k_1+k_2}A_{a,b}(k_1+k_2-1)\,  \zeta^{\star}\left(\{a,b\}^{p-k_1}\right)\zeta\left(\{a,b\}^{q-k_2}\right)\,.
 \end{align*}
 \item For $p \geq 0$ and $q \geq 1$ we have    
 \begin{align*}
  \zeta(a,b;((2p+1)^{2q})/((2p)^{2q-1}))
 &=
 \sum_{\substack{1 \leq k_1 \leq p \\[2pt]1 \leq k_2 \leq q}} (-1)^{k_1+k_2} A_{a,b}(k_1+k_2-1)
 \zeta^{\star}\left(b,\{a,b\}^{p-k_1}\right)\zeta\left(\{a,b\}^{q-k_2}\right)\\
 &\ \ \ -\sum^{q}_{k_2=1}(-1)^{p+k_2}S_{a,b}\left(p+k_2\right) \zeta\left(\{a,b\}^{q-k_2}\right).
 \end{align*}
 \item For $p,q\geq 0$ we have   
 \begin{align*}
  \zeta(a,b;((2p+1)^{2q+1})/((2p)^{2q}))
 &=
 \sum_{\substack{1 \leq k_1 \leq p \\[2pt]1 \leq k_2 \leq q}} (-1)^{k_1+k_2} A_{a,b}(k_1+k_2-1)
 \zeta^{\star}\left(b,\{a,b\}^{p-k_1}\right)\zeta\left(b,\{a,b\}^{q-k_2}\right)\\
 &\ \ \ -\sum^{p}_{k_1=1}(-1)^{q+k_1} S_{a,b}^{\star}\left(q+k_1\right)\zeta^{\star}\left(b,\{a,b\}^{p-k_1}\right)\\
 &\ \ \ -\sum^{q}_{k_2=1}(-1)^{p+k_2}S_{a,b}\left(p+k_2\right)\zeta\left(b,\{a,b\}^{q-k_2}\right)\\
 &\ \ \ +(-1)^{p+q} B_{a,b}(p+q)\,.
 \end{align*}
 \end{enumerate}
 \end{prop}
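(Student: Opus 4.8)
The plan is to reduce every anti-hook to the primitive ribbons $A_{a,b}$, $B_{a,b}$, $S_{a,b}$ and $S^{\star}_{a,b}$, following the same strategy as the hook computation in Proposition~\ref{prop:hook} and the evaluation of $A_{a,b}$ in Theorem~\ref{thm:AB13}. One should note first that the Jacobi--Trudi determinant of Theorem~\ref{thm:JT} is \emph{not} directly available here: the two arms of an anti-hook contain entries equal to $a$, which may be $1$, so the hypothesis that the last entry of every row (resp. column) exceeds $1$ can fail precisely in the main case $a=1$. Hence the engine of the proof must be the harmonic product \eqref{for:harmonicproduct} rather than a determinant.

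The main step is a recursion that shortens an arm. Writing the anti-hook as a vertical arm and a horizontal arm meeting at the unique corner (which carries $b$), I would peel off the outermost block of one arm by a harmonic product, exactly as the difference of two hooks was produced in Proposition~\ref{prop:hook}; this relates the anti-hook with parameter $p$ (or $q$) to the one with $p-1$ (or $q-1$) plus a product of a short $\{a,b\}$-ribbon and a smaller anti-hook. A double induction on $p$ and $q$ then generates the two summation indices $k_1,k_2$. The key feature is that both arms feed into a single staircase, so the recursion repeatedly invokes the $A$-recursions \eqref{for:Arec} and \eqref{for:Arecstar} of Lemma~\ref{lem:ABrec}, which is what fuses the indices into $A_{a,b}(k_1+k_2-1)$, the anti-hook analogue of the combined index $B_{a,b}(k_1+k_2)$ appearing in the hook formula. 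As a sanity check, the width-two case~ii) with $p=1$ is the anti-hook $(2^{2q})/(1^{2q-1})=L_{a,b}(q)$, and the claimed formula there collapses, via the identity $\sum_{k=1}^{q}(-1)^{k}A_{a,b}(k)\,\zeta(\{a,b\}^{q-k})=-L_{a,b}(q)$, to exactly \eqref{for:Arec}.

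The four cases and their correction terms are then dictated by the parities of the two arm-lengths. The parity of the horizontal arm decides whether its factor is $\zeta^{\star}(\{a,b\}^{\bullet})$ or $\zeta^{\star}(b,\{a,b\}^{\bullet})$, and that of the vertical arm controls the $\zeta(\{a,b\}^{\bullet})$ versus $\zeta(b,\{a,b\}^{\bullet})$ factor. The extra $S^{\star}_{a,b}$, $S_{a,b}$ and $B_{a,b}(p+q)$ terms are the boundary contributions of the double induction: when a vertical arm of even length is entirely consumed the residual shape is the ribbon $S^{\star}_{a,b}(q+k_1)$, when a horizontal arm of odd length is consumed it is $S_{a,b}(p+k_2)$, and when both extremal situations occur simultaneously one is left with $B_{a,b}(p+q)$; this explains why the $B$-term survives only in case~iv). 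To collapse the resulting nested convolutions into the stated double sums I would use the inversion identities already at hand: the generating-series relations \eqref{for:gf_of_SSstar} and \eqref{for:SstarS} of Lemma~\ref{lem:SSstargenseries}, together with the expansions \eqref{eq:zbabzsbab}.

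I expect the main obstacle to be organisational rather than conceptual: correctly matching the two orientations $\oplus_{\mathrm{h}}$ and $\oplus_{\mathrm{v}}$ of the harmonic product so that each gluing yields exactly the intended smaller anti-hook or correction ribbon, and then carrying the parity bookkeeping through all four cases while reindexing the double sums (the substitutions $k_1\mapsto p-k_1$, $k_2\mapsto q-k_2$ as in Proposition~\ref{prop:hook}) with the correct signs. The delicate point is verifying that the degenerate boundary shapes coincide precisely with $S_{a,b}$, $S^{\star}_{a,b}$ and $B_{a,b}(p+q)$; for this reason it is cleanest to prove part~ii) first, then parts~i) and~iii), and finally the general case~iv), since the corrections accumulate in that order.
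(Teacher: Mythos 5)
Your proposal is correct and follows essentially the same route as the paper: the paper proves Proposition \ref{prop:antihook} precisely by iterated use of the harmonic product \eqref{for:harmonicproduct} to peel the arms of the anti-hook down to the primitive ribbons $A_{a,b}$, $B_{a,b}$, $S_{a,b}$, $S^{\star}_{a,b}$ (demonstrating the bookkeeping on the example $\zeta(a,b;(3^5)/(2^4))$), which is exactly your peeling/double-induction scheme. Your observations that Jacobi--Trudi is unavailable when $a=1$ and that case ii) with $p=1$ collapses to \eqref{for:Arec} are sound consistency checks, not deviations from the paper's argument.
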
 
\begin{proof}
 These are directly obtained by using the harmonic product formula \eqref{for:harmonicproduct}. We demonstrate the proof for the case $\zeta(a,b;(3^5)/(2^4))$:
\begin{align*}
\ytableausetup{mathmode,boxsize=1.0em,aligntableaux=center} 
\zeta&
\left(
\ 
\begin{ytableau}
 \none    & \none & *(gray)b \\
 \none    & \none & a \\
 \none    & \none & *(gray)b \\
 \none    & \none & a \\
 *(gray)b & a     & *(gray)b
\end{ytableau}
\ \right)
=
\zeta\left(\ 
\begin{ytableau}
\none & \none & a \\
*(gray)b & a & *(gray)b 
\end{ytableau}
\ \right)
\cdot 
\zeta\left(\ 
\begin{ytableau}
*(gray)b \\
a \\
*(gray)b  
\end{ytableau}
\ \right)
-
\zeta\left(\ 
\begin{ytableau}
\none & \none & \none & *(gray)b \\
\none & \none & \none & a \\
\none & \none & a & *(gray)b \\
*(gray)b & a & *(gray)b 
\end{ytableau}
\ \right)
\\
&=
\zeta\left(\ 
\begin{ytableau}
\none & \none & a \\
*(gray)b & a & *(gray)b 
\end{ytableau}
\ \right)
\cdot 
\zeta\left(\ 
\begin{ytableau}
*(gray)b \\
a \\
*(gray)b  
\end{ytableau}
\ \right)
-
\zeta\left(\ 
\begin{ytableau}
\none & \none & \none & a \\
\none & \none & a & *(gray)b \\
*(gray)b & a & *(gray)b 
\end{ytableau}
\ \right)
\cdot 
\zeta\left(\ 
\begin{ytableau}
*(gray)b  
\end{ytableau}
\ \right)
+
\zeta\left(\ 
\begin{ytableau}
\none & \none & \none & a & *(gray)b \\
\none & \none & a & *(gray)b \\
*(gray)b & a & *(gray)b 
\end{ytableau}
\ \right)
\\
&=
\left[
\zeta\left(\ 
\begin{ytableau}
 \none & a \\
 a & *(gray)b 
\end{ytableau}
\ \right)
\cdot 
\zeta\left(\ 
\begin{ytableau}
*(gray)b  
\end{ytableau}
\ \right)
-
\zeta\left(\ 
\begin{ytableau}
 \none & a \\
 a & *(gray)b \\
 *(gray)b
\end{ytableau}
\ \right)
\right]
\zeta\left(\ 
\begin{ytableau}
*(gray)b \\
a \\
*(gray)b  
\end{ytableau}
\ \right)
-
\left[
\zeta\left(\ 
\begin{ytableau}
 \none & \none & a \\
 \none & a & *(gray)b \\
 a & *(gray)b 
\end{ytableau}
\ \right)
\cdot
\zeta\left(\ 
\begin{ytableau}
 *(gray)b 
\end{ytableau}
\ \right)
-
\zeta\left(\ 
\begin{ytableau}
 \none & \none & a \\
 \none & a & *(gray)b \\
 a & *(gray)b \\
 *(gray)b
\end{ytableau}
\ \right)
\right]
\zeta\left(\ 
\begin{ytableau}
*(gray)b  
\end{ytableau}
\ \right)
\\
&\ \ \ 
+
\left[
\zeta\left(\ 
\begin{ytableau}
 \none & \none & a & *(gray)b \\
 \none & a & *(gray)b \\
 a & *(gray)b 
\end{ytableau}
\ \right)
\cdot
\zeta\left(\ 
\begin{ytableau}
*(gray)b  
\end{ytableau}
\ \right)
-
\zeta\left(\ 
\begin{ytableau}
 \none & \none & a & *(gray)b \\
 \none & a & *(gray)b \\
 a & *(gray)b \\
 *(gray)b
\end{ytableau}
\ \right)
\right]
\\
&=A_{a,b}(1) \zeta^{\star}(b)\zeta(b,a,b) - A_{a,b}(2)\zeta^{\star}(b)\zeta(b) +S_{a,b}^{\star}(3)\zeta^{\star}(b)-S_{a,b}(2)\zeta(b,a,b)+S_{a,b}(3)\zeta(b)-B_{a,b}(3).
\end{align*}
\end{proof}

\begin{example} Using Theorem \ref{thm:SSstar} and \ref{thm:AB13} we give the following examples for $(a,b)=(1,3)$.
\begin{align*}
\zeta(1,3;(4^3)/(3^2))
=
\zeta
\left(
\ 
\begin{ytableau}
 \none & \none & \none & *(gray)3 \\
 \none & \none & \none & 1 \\
 1 & *(gray)3 & 1 & *(gray)3  
\end{ytableau}
\ \right)
&=\frac{5}{8} \zeta(3) \zeta(4) \zeta(5) -\frac{1}{8} \zeta(3) \zeta(9)-\frac{13245}{34496}\zeta(4)^3,\\
\zeta(1,3;(4^4)/(3^3))
=
\zeta
\left(
\ 
\begin{ytableau}
 \none & \none & \none & 1 \\
 \none & \none & \none & *(gray)3 \\
 \none & \none & \none & 1 \\
 1 & *(gray)3 & 1 & *(gray)3  
\end{ytableau}
\ \right)
&=\frac{5}{32} \zeta(4)^2 \zeta(5)-\frac{3}{16} \zeta(4) \zeta(9)+\frac{1}{32} \zeta(13), \\
\zeta(1,3;(3^4)/(2^3))
=
\zeta
\left(
\ 
\begin{ytableau}
 \none & \none & 1 \\
 \none & \none & *(gray)3 \\
 \none & \none & 1 \\
 *(gray)3 & 1 & *(gray)3  
\end{ytableau}
\ \right)
&=\frac{1}{8} \zeta(3) \zeta(4) \zeta(5) -\frac{1}{8} \zeta(3) \zeta(9) -\frac{493}{448448} \zeta(4)^3, \\
\zeta(1,3;(3^5)/(2^{4}))
=
\zeta
\left(
\ 
\begin{ytableau}
 \none & \none & *(gray)3 \\
 \none & \none & 1 \\
 \none & \none & *(gray)3 \\
 \none & \none & 1 \\
 *(gray)3 & 1 & *(gray)3  
\end{ytableau}
\ \right)
&=\frac{1}{8} \zeta(3)^2 \zeta(4) \zeta(5) +\frac{7279}{81536} \zeta(3) \zeta(4)^3 -\frac{1}{8} \zeta(3) \zeta(5) \zeta(7) \\
&\ \ \ +\frac{13}{896}\zeta(4)^2 \zeta(7) -\frac{1}{8} \zeta(3)^2 \zeta(9) -\frac{1}{64}\zeta(15).
\end{align*}
\end{example}
 
\begin{remark}\label{rem:integral}
Schur multiple zeta values of anti-hook types also appear in \cite{KanekoYamamoto}, where they are denoted for index sets ${\bf k}$, ${\bf l}$  by $\zeta({\bf k} \varoast {\bf l}^\star)$. For example with ${\bf k}=(b,a,b-1)$ and ${\bf l}=(a,b,a,1)$ we have $ \zeta(a,b;(4^3)/(3^2)) = \zeta({\bf k} \varoast {\bf l}^\star)$. Theorem 4.1. in \cite{KanekoYamamoto} gives an iterated integral expression for these $\zeta({\bf k} \varoast {\bf l}^\star)$, which was generalized in \cite{NakasujiPhuksuwanYamasaki} for arbitrary ribbons. These iterated integral expressions could be used to obtain even more formulas for Schur multiple zeta values of ribbon type.
\end{remark}
 
\begin{proof}[Proof of Theorem~\ref{thm:13rib}]
Again by the harmonic product it is easy to see, that every Schur multiple zeta values of Checkerboard style ribbon can be expressed as a linear combination of products of the primitive ribbons $A$,$B$,$S$, $S^\star$ and the multiple zeta values $\zeta(b,\{a,b\}^n)$ and $\zeta(\{a,b\}^n)$. In the case $(a,b) = (1,3)$ we know by Theorem \ref{thm:AB13} that $A_{1,3}(n)= \frac{2}{4^n} \zeta(4n+1)$ for $n\geq 1$ and $B_{1,3}(n) = \frac{1}{4^n} \zeta(4n+3)$ for $n\geq 0$. Together with $\zeta(\{1,3\}^n) = \frac{1}{4^n} \zeta(\{4\}^n) \in \Q \pi^{4n}$ and Corollary \ref{cor:XXsYYs} we see that the $\Q$-vector space spanned by all Schur multiple zeta values of Checkerboard style ribbons in theses cases is exactly  $\Q[\pi^4,\zeta(3),\zeta(5),\zeta(7),\dots]$. 
\end{proof}

\section{Stairs}
Recall that for a natural number $N \in \N$ we write $\delta_N=(N,N-1,\ldots,2,1)$. We call a Young diagram of the form $\delta_N\slash\mu$ with $\mu \subset \delta_{N}$ a stair. One example for a stair is the primitive ribbon $B_{a,b}(n)$ discussed in the section before. In this section we calculate Checkerboard style Schur multiple zeta values of stair type and prove that these are all given as determinants of matrices whose entries are given by $B_{a,b}(n)$. Before we can state the precise result, we need to introduce some notations.
 
 For $N \in \N$ and a partition $\mu=(\mu_1,\ldots,\mu_N)\subset \delta_{N}$, where we also allow $\mu_i=0$,
 define
\begin{align*}
 J_{0}(\mu)=J_{N,0}(\mu)&=\{j\in \{1,2,\ldots,N\}\,|\,N+j \not\equiv \mu_j \!\!\! \pmod{2}\} \,, \\
 J_{1}(\mu)=J_{N,1}(\mu)&=\{j\in \{1,2,\ldots,N\}\,|\,N+j \equiv \mu_j \!\!\! \pmod{2}\} \,.
\end{align*}
 Notice that $J_{0}(\mu)\cap J_{1}(\mu)=\emptyset$ and $J_{0}(\mu)\sqcup J_{1}(\mu)=\{1,2,\ldots,N\}$.
 Moreover, for $j\in\{1,2,\ldots,N\}$, put
\begin{align*}
 m_{j}(\mu)=m_{N,j}(\mu)
  =
  \begin{cases}
   \frac{N+j-1-\mu_j}{2} & j\in J_{0}(\mu) \,, \\
   \frac{N+j-2-\mu_j}{2} & j\in J_{1}(\mu) \,,
  \end{cases}
 \end{align*} 
and set 
\[
 l_N(\mu)=\sum_{j\in J_{0}(\mu)}(m_{j}(\mu)+j+1).
\]
 Furthermore, define the $N \times N$-matrix $B_N(\mu)=B_{N,a,b}(\mu)$ by 
\[
 B_N(\mu)=\left(\,(-1)^{m_{j}(\mu) - i  + 1}B(m_{j}(\mu) - i  + 1) \,\right)_{1 \leq i,j \leq N},
\]
 where $B(n)=B_{a,b}(n)$ is the primitive ribbon defined at the beginning of Section \ref{sec:primrib}. By $F_N(\mu)=F_{N,a,b}(\mu)$ we denote the submatrix of $B_N(\mu)$ of size $|J_{1}(\mu)|\times |J_{1}(\mu)|$,
 obtained by removing  from $B_N(\mu)$ for all $j\in J_{0}(\mu)$ the $j$-th column and the $(m_{j}(\mu)+1)$-th row.
\begin{thm}
\label{thm:main}
 For $N\in\N$ and a partition $\mu=(\mu_1,\ldots,\mu_N)\subset \delta_{N}$ we have
\begin{align}
\label{for:stair}
 \zeta(a,b;\delta_N/\mu)
&=(-1)^{l_N(\mu)}\det\left( F_N(\mu)\right)\,,\\
\label{for:stairtranspose}
 \zeta(a,b;\delta_N/\mu)
&=(-1)^{l_N(\mu')}\det\left( F_N(\mu') \right)\,.
\end{align}
\end{thm}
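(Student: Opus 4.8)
The plan is to start from the Jacobi--Trudi formula (Theorem~\ref{thm:JT}) applied to the self-conjugate staircase $\delta_N$. Since the rightmost entry of every row and the bottom entry of every column of $\delta_N/\mu$ is a corner, hence equal to $b\geq 2$, both parts of Theorem~\ref{thm:JT} apply. I will prove \eqref{for:stair} from part~i) (the $\zeta^\star$-version, which is indexed by $\mu$) and then obtain \eqref{for:stairtranspose} verbatim from part~ii) (the $\zeta$-version, which is indexed by $\mu'$ because $\delta_N'=\delta_N$); the two determinants represent the same number $\zeta(a,b;\delta_N/\mu)$, so no separate symmetry argument is needed.

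First I would unwind part~i) for $\lambda=\delta_N$. Writing $\ell_{ij}=\lambda_i-\mu_j-i+j=N+j-2i+1-\mu_j$ for the length of the MZV in position $(i,j)$, and using that the diagonal values $d_m$ equal $b$ exactly on the corner diagonals (which all share one parity), a parity count shows that the leading entry $d_{\mu_j-j+1}$ is $b$ precisely when $N+j\equiv\mu_j\pmod 2$, i.e. for $j\in J_1(\mu)$, and is $a$ for $j\in J_0(\mu)$. Consequently the $(i,j)$-entry of the Jacobi--Trudi matrix $M$ equals $\zeta^\star(\{a,b\}^{m_j(\mu)-i+1})$ for $j\in J_0(\mu)$ and $\zeta^\star(b,\{a,b\}^{m_j(\mu)-i+1})$ for $j\in J_1(\mu)$; in particular each $J_0$-column carries the value $1$ in row $m_j(\mu)+1$ and $0$ below it. This is exactly the data that $J_0,J_1,m_j$ were introduced to record.

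The key step is a single left multiplication. Let $G=\big((-1)^{i'-i}S^{\star}(i'-i)\big)_{1\leq i,i'\leq N}$ be the upper unitriangular Toeplitz matrix built from $S^{\star}$, so $\det G=1$. By the first identity in \eqref{for:gf_of_SSstar} the sequence $(-1)^n S^{\star}(n)$ is the convolution inverse of $(\zeta^\star(\{a,b\}^n))_n$, so along each $J_0$-column the convolution induced by $G$ collapses $\zeta^\star(\{a,b\}^{m_j-i+1})$ to a delta at $i=m_j(\mu)+1$, turning that column into the standard basis vector $e_{m_j(\mu)+1}$. Simultaneously, the $\zeta^\star$-relation in \eqref{eq:zbabzsbab} reads $\zeta^\star(b,\{a,b\}^p)=\sum_k(-1)^kB(k)\zeta^\star(\{a,b\}^{p-k})$, so dividing out the same factor leaves the bare entries $(-1)^{m_j(\mu)-i+1}B(m_j(\mu)-i+1)$ in each $J_1$-column. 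Thus $GM$ has its $J_1$-columns equal to the corresponding columns of $B_N(\mu)$ and its $J_0$-columns equal to the vectors $e_{m_j(\mu)+1}$, while $\det M=\det(GM)$; the truncation at the bottom of the matrix is harmless because all entries with $m_j(\mu)-i+1<0$ already vanish.

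Finally I would apply the Laplace expansion of $\det(GM)$ along the column set $J_0(\mu)$. Because each such column is $e_{m_j(\mu)+1}$, the only nonvanishing term selects the rows $I=\{m_j(\mu)+1 : j\in J_0(\mu)\}$; since the values $m_j(\mu)$ for $j\in J_0(\mu)$ strictly increase with $j$ (a short check using that $\mu$ is a partition), this matching is order preserving, the $|J_0|\times|J_0|$ minor equals $1$, and the complementary minor is exactly $\det(F_N(\mu))$. The Laplace sign is $(-1)^{\sum_{i\in I}i+\sum_{j\in J_0(\mu)}j}=(-1)^{\sum_{j\in J_0(\mu)}(m_j(\mu)+j+1)}=(-1)^{l_N(\mu)}$, which gives \eqref{for:stair}. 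Running the identical argument on part~ii) of Theorem~\ref{thm:JT}, with the non-star generating function in \eqref{for:gf_of_SSstar} and the $\zeta$-relation in \eqref{eq:zbabzsbab}, yields \eqref{for:stairtranspose}. The main obstacle is the combinatorial bookkeeping of the first two steps: checking that the parity conditions sort the columns precisely into the $J_0/J_1$ pattern with the stated lengths $m_j(\mu)-i+1$, and that the single operator $G$ acts compatibly on both column types and on the boundary values $0,1$; once the monotonicity of $m_j(\mu)$ along $J_0(\mu)$ is verified, the sign $(-1)^{l_N(\mu)}$ falls straight out of the expansion.
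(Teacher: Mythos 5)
Your proof is correct, and its skeleton coincides with the paper's: both apply the two Jacobi--Trudi formulas of Theorem~\ref{thm:JT} (part i) for \eqref{for:stair}; part ii), exploiting $\delta_N'=\delta_N$, for \eqref{for:stairtranspose}), both reduce the resulting $N\times N$ matrix by determinant-preserving unipotent operations to the matrix whose $J_0(\mu)$-columns are the standard vectors $e_{m_j(\mu)+1}$ and whose $J_1(\mu)$-columns are the columns $b(m_j(\mu))$ of $B_N(\mu)$, and both then read off the sign $(-1)^{l_N(\mu)}$ and the minor $\det F_N(\mu)$. The one genuine difference is the reduction device. The paper's Lemma~\ref{lem:elementarytranceformation} performs successive column operations (proved by induction on $k$) using only the $B$-recursion \eqref{for:1F0}, in vector form \eqref{for:1F0vector}, and never needs $S$ or $S^\star$; you instead left-multiply once by the unitriangular Toeplitz matrix $G=\bigl((-1)^{i'-i}S^\star(i'-i)\bigr)$, so that the convolution-inverse property from Lemma~\ref{lem:SSstargenseries} collapses the $J_0$-columns to basis vectors while \eqref{eq:zbabzsbab} converts the $J_1$-columns into $B$-columns. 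Your packaging buys something: $\det G=1$ is manifest, both column types are handled by a single operator, and the final sign falls out of a clean Laplace expansion along the $J_0$-columns, for which you rightly verify that $m_j(\mu)$ is strictly increasing on $J_0(\mu)$ --- precisely the point the paper compresses into ``it is easy to see''. What the paper's route buys in exchange is independence of Lemma~\ref{lem:SSstargenseries}: Theorem~\ref{thm:main} there rests on the recursion \eqref{for:1F0} alone. One trivial slip, not a gap: in your main argument (the part i) case) you need the second, star identity of \eqref{for:gf_of_SSstar}, not the first; the first (non-star) one is the one used in your part ii) run, exactly as you state at the end.
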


 We first show some examples.
  
\begin{example}
  Let us consider the case of $N=5$. 
\begin{enumerate}[i)]
\item
 When $\mu=(2,2,1)$,
 we have $J_{0}(\mu)=\{2,3,4\}$, $J_{1}(\mu)=\{1,5\}$ and hence $(m_{1}(\mu),\ldots,m_{5}(\mu))=(1,2,3,4,4)$.
 This yields $l_5(\mu)=21$,
\[
 B_5(\mu)
 \, = \,
\begin{pmatrix}
 -B(1) & {\color{darkgray}B(2)} & {\color{darkgray}-B(3)} & {\color{darkgray}B(4)} & B(4) \\
 B(0) & {\color{darkgray}-B(1)} & {\color{darkgray}B(2)} & {\color{darkgray}-B(3)} & -B(3) \\
 {\color{darkgray}0} & {\color{darkgray}B(0)} & {\color{darkgray}-B(1)} & {\color{darkgray}B(2)} & {\color{darkgray}B(2)}  \\
 {\color{darkgray}0} & {\color{darkgray}0} & {\color{darkgray}B(0)} & {\color{darkgray}-B(1)} & {\color{darkgray}-B(1)} \\
 {\color{darkgray}0} & {\color{darkgray}0} & {\color{darkgray}0} & {\color{darkgray}B(0)} & {\color{darkgray}B(0)} \\ 
\end{pmatrix}\  
 \text{ and } \  
 F_5(\mu)
 \, = \,
\begin{pmatrix}
 -B(1) & B(4) \\
 B(0) & -B(3)
\end{pmatrix}
 \,.
\]
 Hence, from \eqref{for:stair},
\[
\ytableausetup{mathmode,boxsize=1.2em,aligntableaux=center} 
\zeta\left(
\ {\footnotesize
\begin{ytableau}
 \none & \none & *(gray) b & a & *(gray) b \\
 \none & \none & a & *(gray) b \\
 \none & a & *(gray) b \\
 a & *(gray) b \\
 *(gray) b
\end{ytableau}}
\
\right)
\,=\,(-1)^{21}
\begin{vmatrix}
 -B(1) & B(4) \\
 B(0) & -B(3)
\end{vmatrix}
\,=\,-
\begin{vmatrix}
 B(1) & B(4) \\
 B(0) & B(3)
\end{vmatrix}
 \,.
\]
 \item
 When $\mu=(2,2)$,
 we have $J_{0}(\mu)=\{2,4\}$, $J_{1}(\mu)=\{1,3,5\}$ and hence $(m_{1}(\mu),\ldots,m_{5}(\mu))=(1,2,3,4,4)$.
 This yields $l_5(\mu)=14$,
\[
 B_5(\mu)
 \, = \,
\begin{pmatrix}
 -B(1) & {\color{darkgray}B(2)}& -B(3) & {\color{darkgray}B(4)}& B(4) \\
 B(0) & {\color{darkgray}-B(1)}& B(2) &{\color{darkgray}-B(3)} & -B(3) \\
 {\color{darkgray}0} & {\color{darkgray}B(0)} & {\color{darkgray}-B(1)} &{\color{darkgray}B(2)} &{\color{darkgray}B(2)} \\
 0 & {\color{darkgray}0} & B(0) & {\color{darkgray}-B(1)}& -B(1) \\
 {\color{darkgray}0} & {\color{darkgray}0} & {\color{darkgray}0} & {\color{darkgray}B(0)} & {\color{darkgray}B(0)} \\ 
\end{pmatrix}\ 
 \text{ and } \ 
 F_5(\mu) 
 \, = \,
\begin{pmatrix}
 -B(1) & -B(3) & B(4) \\
 B(0) & B(2) & -B(3) \\
 0 & B(0) & -B(1)
\end{pmatrix}
\,.
\]
 Hence, from \eqref{for:stair},
\[
\ytableausetup{mathmode,boxsize=1.2em,aligntableaux=center} 
\zeta\left(
\ {\footnotesize
\begin{ytableau}
 \none & \none & *(gray) b & a & *(gray) b \\
 \none & \none & a & *(gray) b \\
 *(gray) b & a & *(gray) b \\
 a & *(gray) b \\
 *(gray) b
\end{ytableau}}
\
\right)
\,=\,
 (-1)^{14}
\begin{vmatrix}
 -B(1) & -B(3) & B(4) \\
 B(0) & B(2) & -B(3) \\
 0 & B(0) & -B(1)
\end{vmatrix}
\,=\,
\begin{vmatrix}
 B(1) & B(3) & B(4) \\
 B(0) & B(2) & B(3) \\
 0 & B(0) & B(1)
\end{vmatrix}
\,.
\] 
\end{enumerate} 
\end{example}

 From \eqref{for:Brec}, we have 
\begin{equation}
\label{for:1F0}
\zeta(b,\{a,b\}^n)
=\sum^{n}_{k=0}(-1)^{n-k}B(n-k) \zeta(\{a,b\}^k)\,. 
\end{equation}
 Let $e_1,\ldots,e_N\in\mathbb{R}^N$ be the standard basis of $\mathbb{R}^N$.
 For $n\ge 0$, define $t(n)=t_N(n)\in\R^N$, $y(n)=y_N(n)\in\R^N$ and $b(n)=b_N(n)\in\R^N$ by
\[
 t(n)=\sum^{N}_{i=1} \zeta(\{a,b\}^{n+1-i}) e_i\,, \quad   
 y(n)=\sum^{N}_{i=1} \zeta(b,\{a,b\}^{n+1-i}) e_i\,, \quad 
 b(n)=\sum^{N}_{i=1}(-1)^{n+1-i}B(n+1-i)e_i\,.
\]
 Notice that, from \eqref{for:1F0}, we have 
\begin{equation}
\label{for:1F0vector}
 y(n)=\sum^{n}_{i=0}(-1)^{n-i}B(n-i)t(i)\,. 
\end{equation}

 The following is elementary but useful in our discussion.
 
\begin{lemma}
\label{lem:elementarytranceformation}
 Let $n_1,\ldots,n_k,m$ be positive integers satisfying $n_1<\cdots<n_k\leq m<N$.
 Then, by the elementary transformation,
 the matrix $\left(t(n_1),\ldots,t(n_k),y(m) \right)$ of size $N\times (k+1)$ is reduced to 
\[
 \left( e_{n_1+1},\ldots,e_{n_k+1},b(m)\right)\,.
\] 
\end{lemma}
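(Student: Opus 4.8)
The plan is to exhibit a single invertible matrix $P$, realized as a product of elementary row operations, that simultaneously sends each $t(n)\mapsto e_{n+1}$ and $y(m)\mapsto b(m)$; applying these row operations to the columns of $\left(t(n_1),\ldots,t(n_k),y(m)\right)$ then produces the claimed reduction. Note first that this really must be a row transformation: since $t(n)$ has entries in rows $1,\ldots,n+1$ while $e_{n+1}$ sits in a single row, no amount of column operations among the given $k+1$ columns can produce the $e_{n_j+1}$, so the column spaces do not match and row operations are essential.

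First I would assemble the $N\times N$ matrix $T$ whose $c$-th column is $t(c-1)$, so that its $(i,c)$-entry is $\zeta(\{a,b\}^{c-i})$. Since $\zeta(\{a,b\}^0)=1$ and $\zeta(\{a,b\}^r)=0$ for $r<0$, the matrix $T$ is upper triangular with $1$'s on the diagonal, hence unipotent and invertible; its inverse is again unipotent upper triangular and therefore a product of elementary transvections, so that left multiplication by $P:=T^{-1}$ is an elementary row transformation. By the very definition of $T$ we have $P\,t(n)=e_{n+1}$ for every $0\le n\le N-1$, which covers all of $n_1,\ldots,n_k$ precisely because $n_k\le m<N$. (In fact the entries of $P$ are the alternating $S$-values coming from the inverse relation in Lemma~\ref{lem:SSstargenseries} and \eqref{for:gf_of_SSstar}, but this explicit form is not needed.)

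It then remains to check that the same $P$ sends $y(m)$ to $b(m)$, and this is where the recursion \eqref{for:1F0vector} does all the work. Applying $P$ to $y(m)=\sum_{i=0}^{m}(-1)^{m-i}B(m-i)\,t(i)$ and using $P\,t(i)=e_{i+1}$ gives $P\,y(m)=\sum_{i=0}^{m}(-1)^{m-i}B(m-i)\,e_{i+1}$. Re-indexing by $r=i+1$ turns the right-hand side into $\sum_{r=1}^{m+1}(-1)^{m+1-r}B(m+1-r)\,e_r$, and since $B(r')=0$ for $r'<0$ the summation may be extended up to $r=N$ without change; this is exactly $b(m)=\sum_{i=1}^{N}(-1)^{m+1-i}B(m+1-i)e_i$. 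Hence $P\,y(m)=b(m)$, and applying the row operations encoded by $P$ to all $k+1$ columns at once reduces $\left(t(n_1),\ldots,t(n_k),y(m)\right)$ to $\left(e_{n_1+1},\ldots,e_{n_k+1},b(m)\right)$.

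The computational content here is light; the point that needs care is conceptual rather than technical, namely recognizing that one fixed unipotent row operation $P$ does both jobs at once. The reduction $t(n)\mapsto e_{n+1}$ is forced by the triangular Toeplitz structure of $T$, and the reduction $y(m)\mapsto b(m)$ is then automatic precisely because the expansion \eqref{for:1F0vector} of $y(m)$ in the $t(i)$ carries exactly the signed $B$-coefficients $(-1)^{m-i}B(m-i)$ that define $b(m)$. In writing this up I would be explicit that $P$ is upper-triangular unipotent, so that the operations involved are genuinely of the admissible type and the transformation is elementary.
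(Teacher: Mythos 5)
Your proof is correct, and it organizes the reduction genuinely differently from the paper. The paper treats $k=1$ by an explicit six-step chain: it expands $y(m)$ via \eqref{for:1F0vector}, then alternates column operations (subtracting the multiple $(-1)^{m-n_1}B(m-n_1)$ of the column $t(n_1)$, and adding it back at the end) with row operations that replace the various $t(i)$ occurring in that expansion by $e_{i+1}$, and finally handles general $k$ by induction. You instead exhibit one global matrix $P=T^{-1}$, the inverse of the unipotent upper-triangular Toeplitz matrix $T$ with columns $t(0),\ldots,t(N-1)$, and get both reductions at once: $P\,t(n)=e_{n+1}$ by construction, and $P\,y(m)=b(m)$ by linearity from \eqref{for:1F0vector} together with the convention $B(r)=0$ for $r<0$. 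The two arguments rest on the same identity \eqref{for:1F0vector}, but yours removes the induction and the step-by-step bookkeeping, and it makes explicit that the whole transformation is a single determinant-one row operation (a product of transvections, with entries the alternating $S$-values from \eqref{for:gf_of_SSstar}, as you note). This formulation also pays off where the lemma is actually used: in the proof of Theorem~\ref{thm:main} several columns $t(m_j(\mu))$ and $y(m_j(\mu))$ of one $N\times N$ matrix must be reduced simultaneously, and your single $P$ with $\det P=1$ does exactly that in one stroke, whereas with the paper's column-group-by-column-group reduction one must still check that the operations used for different $y$-columns do not interfere. Your opening observation that column operations alone cannot achieve the reduction (so row operations are unavoidable) is sound and consistent with the paper, whose own chain of reductions also implicitly mixes in row operations.
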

\begin{proof}
 For matrices $P,Q$, 
 let us write $P\to Q$ if $Q$ is obtained by some elementary transforms from $P$.
 
 When $k=1$, noticing $T(0)=1$ and using \eqref{for:1F0vector}, we have
\begin{align*}
 \,&\left( t(n_1),\sum^{m}_{i=0}(-1)^{m-i}B(m-i)t(i) \right)\\
\to\,&\left( t(n_1),\sum^{n_1}_{i=0}(-1)^{m-i}B(m-i)t(i)+\sum^{m+1}_{i=n_1+2}(-1)^{m+1-i}B(m+1-i)e_i \right)\\
\to\,&\left( t(n_1),\sum^{n_1-1}_{i=0}(-1)^{m-i}B(m-i)t(i)+\sum^{m+1}_{i=n_1+2}(-1)^{m+1-i}B(m+1-i)e_i \right)\\
\to\,&\left( e_{n_1+1},\sum^{n_1-1}_{i=0}(-1)^{m-i}B(m-i)t(i)+\sum^{m+1}_{i=n_1+2}(-1)^{m+1-i}B(m+1-i)e_i \right)\\
\to\,&\left( e_{n_1+1},\sum^{n_1}_{i=1}(-1)^{m+1-i}B(m+1-i)e_i+\sum^{m+1}_{i=n_1+2}(-1)^{m+1-i}B(m+1-i)e_i \right)\\
=\,&\left( e_{n_1+1},b(m)-(-1)^{m-n_1}B(m-n_1)e_{n_1+1} \right)\to\,\left( e_{n_1+1},b(m) \right).
\end{align*}
 Hence the claim follows.
 The general cases are verified by induction on $k$.
\end{proof}

 Now we give a proof of Theorem~\ref{thm:main}.
\begin{proof}
[Proof of Theorem~\ref{thm:main}]
 We first show the equation \eqref{for:stairtranspose}.
 From the Jacobi-Trudi formula \eqref{for:JTzeta},
 we have $\zeta(a,b;\delta_N/\mu^\prime)=\det( \alpha_1,\ldots,\alpha_{N} )$
 where $\alpha_j\in\R^N$ is defined by  
\[
 \alpha_j
=
\begin{cases}
 t(m_j(\mu)) & j\in J_0(\mu), \\
 y(m_j(\mu)) & j\in J_1(\mu). 
\end{cases}
\]
 Moreover, from Lemma~\ref{lem:elementarytranceformation},
 we have $\zeta(a,b;\delta_N/\mu^\prime)=\det( \beta_1,\ldots,\beta_N )$
 where $\beta_j\in\R^N$ is defined by   
\[
 \beta_j
=
\begin{cases}
 e_{m_j(\mu)+1} & j\in J_0(\mu), \\
 b(m_j(\mu)) & j\in J_1(\mu). 
\end{cases}
\]
 Now, it is easy to see that this is equal to $(-1)^{l_N(\mu)}\det( F_N(\mu) )$ and therefore interchanging $\mu$ with $\mu^\prime$ we obtain $\zeta(a,b;\delta_N/\mu)=(-1)^{l_N(\mu')}\det\left( F_N(\mu') \right)$.

 The equation \eqref{for:stair} is similarly obtained by the same discussion above. For this we use the Jacobi-Trudi formula \eqref{for:JTzetastar} for $\zeta(a,b;\delta_N/\mu)$ instead of  \eqref{for:JTzeta} for $\zeta(a,b;\delta_N/\mu^\prime)$, which will give the exact same matrix as before, except that $\zeta$ will be replaced by $\zeta^\star$. Since equation \eqref{for:1F0} is also true for $\zeta^\star$ the statement in Lemma \ref{lem:elementarytranceformation} remains true by replacing $t$ by $t^\star $ and $y$ by $y^\star$, which are defined by
  \[
   t^\star(n)=\sum^{N}_{i=1} \zeta^\star(\{a,b\}^{n+1-i}) e_i\,, \quad   
   y^\star(n)=\sum^{N}_{i=1} \zeta^\star(b,\{a,b\}^{n+1-i}) e_i\,.
  \]
  Therefore we also obtain $\zeta(a,b;\delta_N/\mu) = \det( \beta_1,\ldots,\beta_N ) = (-1)^{l_N(\mu)}\det( F_N(\mu) )$.
\end{proof}

 As a special case, we have the following.
 
\begin{thm}
\label{thm:deltaNdeltan}
 For $N\in\N$ and $0\le n\le N-1$, 
 $\zeta(a,b;\delta_N\slash\delta_n)$ can be written as a Hankel determinant of the matrices in $B(n)$.
 More precisely, 
\begin{enumerate}[i)]
\item
 When $N\equiv n$ \!\!\! $\pmod{2}$, we have
\[
 \zeta(a,b;\delta_N/\delta_n)
=\det\left( B(i+j+n-1) \right)_{1\leq i,j\leq \frac{N-n}{2}}\,.
\]
\item
 When $N\not\equiv n$ \!\!\! $\pmod{2}$, we have 
\[
 \zeta(a,b;\delta_N/\delta_n)
=(-1)^{\frac{1}{2}n(n+1)}\det\left( B(i+j-n-2) \right)_{1\leq i,j\leq \frac{N+n+1}{2}}\,.
\]
\end{enumerate}
\end{thm}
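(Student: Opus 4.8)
The plan is to obtain this as a direct specialization of Theorem~\ref{thm:main} to the self-conjugate partition $\mu=\delta_n=(n,n-1,\dots,2,1)$, padded by $N-n$ zeros so that $\mu=(\mu_1,\dots,\mu_N)$ with $\mu_j=n-j+1$ for $1\le j\le n$ and $\mu_j=0$ for $j>n$. Since the skew shape $\delta_N/\delta_n$ is self-conjugate, equations~\eqref{for:stair} and~\eqref{for:stairtranspose} coincide, and it suffices to evaluate $(-1)^{l_N(\delta_n)}\det(F_N(\delta_n))$ from~\eqref{for:stair}. First I would compute the combinatorial data $J_0(\delta_n)$, $J_1(\delta_n)$ and $m_j(\delta_n)$ explicitly. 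A short parity computation shows that for $1\le j\le n$ one has $N+j-\mu_j\equiv N-n-1\pmod 2$, so these indices all lie in $J_0$ when $N\equiv n$ and all lie in $J_1$ when $N\not\equiv n$; for $j>n$ one has $\mu_j=0$, so $j\in J_1$ exactly when $j\equiv N\pmod 2$. In particular $|J_1(\delta_n)|=\tfrac{N-n}{2}$ in case (i) and $|J_1(\delta_n)|=\tfrac{N+n+1}{2}$ in case (ii), which already matches the size of the asserted Hankel determinants.

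The second step is to exhibit the Hankel structure. Writing the $(i,j)$-entry of $B_N(\mu)$ as $(-1)^{m_j-i+1}B(m_j-i+1)$ and using the identity $(-1)^{m_j-i+1}=(-1)^{i}\,(-1)^{m_j+1}$, I would factor $F_N(\delta_n)=D_{\mathrm{row}}\,\widetilde F\,D_{\mathrm{col}}$, where $D_{\mathrm{row}}=\mathrm{diag}((-1)^i)$ runs over the surviving rows, $D_{\mathrm{col}}=\mathrm{diag}((-1)^{m_j+1})$ runs over the columns $j\in J_1$, and $\widetilde F=\bigl(B(m_j-i+1)\bigr)$. A direct count of the deleted rows $\{m_j+1:j\in J_0\}$ shows that the surviving rows are exactly $\{1,\dots,\tfrac{N-n}{2}\}$ in case (i) and $\{1,\dots,\tfrac{N+n+1}{2}\}$ in case (ii); moreover the values $m_j$ for $j\in J_1$, read in increasing order of $j$, form a block of consecutive integers (using $m_j=\tfrac{N-n}{2}+j-1$ for $1\le j\le n$ together with the closed forms $m_j=\tfrac{N+j-2}{2}$ for the surviving $j>n$). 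Hence $\widetilde F$ depends only on $m_j-i$, and reversing the order of its rows turns it into precisely the Hankel matrix $\bigl(B(i+j+n-1)\bigr)$ in case (i) and $\bigl(B(i+j-n-2)\bigr)$ in case (ii).

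It remains to collect all the signs, and this is the step I expect to be the main obstacle: one must combine $\det D_{\mathrm{row}}=\prod_{i}(-1)^i$, $\det D_{\mathrm{col}}=\prod_{j\in J_1}(-1)^{m_j+1}$, the sign $(-1)^{\binom{M}{2}}$ of the row-reversal permutation (with $M$ the matrix size), and the global prefactor $(-1)^{l_N(\delta_n)}$ coming from~\eqref{for:stair}, and check that their product is $+1$ in case (i) and $(-1)^{\frac12 n(n+1)}$ in case (ii). Each factor is an explicit sum of arithmetic progressions in $n$ and $N$, so $l_N(\delta_n)=\sum_{j\in J_0}(m_j+j+1)$ and the other exponents can be written out in closed form and reduced modulo $2$. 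The delicate point is keeping the two parity cases strictly separate and confirming that the large contributions from $l_N(\delta_n)$ cancel against those from $D_{\mathrm{row}}$, $D_{\mathrm{col}}$ and the reversal, leaving only the stated residual sign; once this bookkeeping is completed the two formulas follow at once.
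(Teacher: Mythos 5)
Your proposal follows essentially the same route as the paper's own proof: specialize Theorem~\ref{thm:main} to $\mu=\delta_n$, compute $J_0(\delta_n)$, $J_1(\delta_n)$ and the $m_j(\delta_n)$ explicitly (your parity computations, the resulting matrix sizes, the identification of the surviving rows as $\{1,\dots,m\}$, and the consecutive blocks of $m_j$-values all agree with the paper's data), and then extract the Hankel determinant by a row reversal together with sign bookkeeping against $(-1)^{l_N(\delta_n)}$. The only cosmetic difference is that the paper organizes the sign count via the identities $\det\left( a_{m+1-i,j} \right)_{1\leq i,j\leq m}=(-1)^{\frac{1}{2}m(m-1)}\det\left(a_{i,j}\right)_{1\leq i,j\leq m}$ and $\det\left( c^{i+j}a_{i,j} \right)_{1\leq i,j\leq m}=c^{m(m+1)}\det\left(a_{i,j}\right)_{1\leq i,j\leq m}$ instead of your diagonal-matrix factorization, which is the same computation.
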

\begin{proof}
 This follows from Theorem~\ref{thm:main}
 with the following data which can be obtained by direct calculations.
\begin{enumerate}[i)]
\item When $N\equiv n$ $\pmod{2}$, letting $m=\frac{N-n}{2}$, we have
\begin{align*}
 J_0(\delta_n)
&=\{1,2,\ldots,n\}\sqcup\{n+1,n+3,\ldots,N-1\}\,, \\
 J_1(\delta_n)
&=\{n+2,n+4,\ldots,N\}\,, \\ 
 \{m_j(\delta_n)\,|\,j\in J_0(\delta_n)\}
&=\left\{\frac{N-n}{2},\frac{N-n+2}{2},\ldots,\ldots,N-1\right\}=\left\{m+j-1\,\left|\,1\leq j\leq\frac{N+n}{2}\right.\right\}\, \\
 \{m_j(\delta_n)\,|\,j\in J_1(\delta_n)\}
&=\left\{\frac{N+n}{2},\frac{N+n+2}{2},\ldots,N-1\right\}\,, \\
 l_N(\delta_n)
&\equiv mn+\frac{1}{2}m\left( m+1 \right) \!\!\! \pmod{2}\,, \\  
 F_N(\delta_n)
&=\left( (-1)^{m+1-i+j+n-1}B\left( m+1-i+j+n-1 \right) \right)_{1\leq i,j\leq m}\,.
\end{align*}
 Hence, we have 
\begin{align}
 (-1)^{l_N(\delta_n)}&\det\left( F_N(\delta_n) \right) \nonumber \\
\label{for:Nneven}
&=(-1)^{mn+\frac{1}{2}m(m+1)}\cdot (-1)^{\frac{1}{2}m(m-1)+m(n-1)+m(m+1)}\det\left( B(i+j+n-1) \right)_{1\leq i,j\leq m}\\
&=\det\left( B(i+j+n-1) \right)_{1\leq i,j\leq m}\,. \nonumber
\end{align}
\item When $N\not\equiv n$ $\pmod{2}$, letting $m=\frac{N+n+1}{2}$, we have
\begin{align*}
 J_0(\delta_n)
&=\{n+2,n+4,\ldots,N-1\}=\left\{n+2j\,\left|\,1\leq j\leq\frac{N-n-1}{2}\right.\right\}\,, \\
 J_1(\delta_n)
&=\{1,2,\ldots,n\}\sqcup\{n+1,n+3,\ldots,N\}\,, \\
 \{m_j(\delta_n)\,|\,j\in J_0(\delta_n)\}
&=\left\{\frac{N+n+1}{2},\frac{N+n+3}{2},\ldots,N-1\right\}=\left\{m+j-1\,\left|\,1\leq j\leq\frac{N-n-1}{2}\right.\right\}\,, \\
 \{m_j(\delta_n)\,|\,j\in J_1(\delta_n)\}
&=\left\{\frac{N-n-1}{2},\frac{N-n+1}{2},\ldots,N-1\right\}\,, \\
 l_N(\delta_n)&\equiv \frac{1}{2}(m-n-1)\left(3m+n \right) \!\!\! \pmod{2}\,,\\
 F_N(\delta_n)
&=\left( (-1)^{m+1-i+j-n-2}B\left( m+1-i+j-n-2 \right) \right)_{1\leq i,j\leq m}\,.
\end{align*}
 Hence, we have 
\begin{align}
 (-1)^{l_N(\delta_n)}&\det\left( F_N(\delta_n) \right) \nonumber \\
\label{for:Nnodd}
&=(-1)^{\frac{1}{2}(m-n-1)\left(3m+n \right)}\cdot (-1)^{\frac{1}{2}m(m-1)-m(n+2)}\det\left( B(i+j-n-2) \right)_{1\leq i,j\leq\frac{N+n+1}{2}}\\
&=(-1)^{\frac{1}{2}n(n+1)}\det\left( B(i+j-n-2) \right)_{1\leq i,j\leq\frac{N+n+1}{2}}\,. \nonumber
\end{align}
\end{enumerate} 
 Notice that, in the equalities \eqref{for:Nneven} and \eqref{for:Nnodd},
 we have used the formulas
\begin{align*}
 \det( a_{m+1-i,j} )_{1\leq i,j\leq m}
&=(-1)^{\frac{1}{2}m(m-1)}\det(a_{i,j})_{1\leq i,j\leq m}\,,\\
 \det( c^{i+j}a_{i,j} )_{1\leq i,j\leq m}
&=c^{m(m+1)}\det(a_{i,j})_{1\leq i,j\leq m}\,.
\end{align*}

\end{proof}

\begin{example}
 When $N=5$, we have 
\begin{align*}
\ytableausetup{mathmode,boxsize=1.2em,aligntableaux=center} 
\zeta\left(
\ {\footnotesize
\begin{ytableau}
 *(gray) b & a & *(gray) b & a & *(gray) b \\
 a & *(gray) b & a & *(gray) b \\
 *(gray) b & a & *(gray) b \\
 a & *(gray) b \\
 *(gray) b
\end{ytableau}}
\
\right)
\,&=\,
\begin{vmatrix}
 B(0) & B(1) & B(2) \\
 B(1) & B(2) & B(3) \\
 B(2) & B(3) & B(4) 
\end{vmatrix}
, &   
\zeta\left(
\ {\footnotesize
\begin{ytableau}
 \none & a & *(gray) b & a & *(gray) b \\
 a & *(gray) b & a & *(gray) b \\
 *(gray) b & a & *(gray) b \\
 a & *(gray) b \\
 *(gray) b
\end{ytableau}}
\
\right)
\,&=\,
\begin{vmatrix}
 B(2) & B(3) \\
 B(3) & B(4) 
\end{vmatrix}
,\\[5pt]
\zeta\left(
\ {\footnotesize
\begin{ytableau}
 \none & \none & *(gray) b & a & *(gray) b \\
 \none & *(gray) b & a & *(gray) b \\
 *(gray) b & a & *(gray) b \\
 a & *(gray) b \\
 *(gray) b
\end{ytableau}}
\
\right)
\,&=\,- 
\begin{vmatrix}
 0 & 0 & B(0) & B(1) \\
 0 & B(0) & B(1) & B(2) \\
 B(0) & B(1) & B(2) & B(3) \\
 B(1) & B(2) & B(3) & B(4) 
\end{vmatrix}
, & 
\zeta\left(
\ {\footnotesize
\begin{ytableau}
 \none & \none & \none & a & *(gray) b \\
 \none & \none & a & *(gray) b \\
 \none & a & *(gray) b \\
 a & *(gray) b \\
 *(gray) b
\end{ytableau}}
\
\right)
\,&=\,
\begin{vmatrix}
 B(4) 
\end{vmatrix}
\,.
\end{align*} 
\end{example}

 When $(a,b)=(1,3)$, from Theorem~\ref{thm:deltaNdeltan} and \eqref{for:B13}, we have the following formulas.
 
\begin{cor}\label{cor:31case}
 For $N\in\N$ and $0\le n\le N-1$, 
 $\zeta(1,3;\delta_N/\delta_n)$ can be written as a Hankel determinant of the matrices in $\zeta(4k+3)$.
 More precisely, 
\begin{enumerate}[i)]
\item
 When $N\equiv n$ \!\!\! $\pmod{2}$,
 we have
\begin{align*}
 \zeta(1,3;\delta_N/\delta_n)
&=4^{-\frac{1}{4}(N+n)(N-n)}
\det\left( \zeta\left(4(i+j+n)-1\right) \right)_{1\le i,j\le \frac{N-n}{2}}.
\end{align*}
\item
 When $N\not\equiv n$ \!\!\! $\pmod{2}$,
 we have 
\begin{align*}
 \zeta(1,3;\delta_N/\delta_n)
&=(-1)^{\frac{1}{2}n(n+1)}4^{-\frac{1}{4}(N+n+1)(N-n-1)}
\det\left( \zeta\left(4(i+j-n)-5\right)\right)_{1\le i,j\le \frac{N+n+1}{2}},
\end{align*}
 where we set $\zeta(n)=0$ if $n<0$.
\end{enumerate}
 In particular,  
 $\zeta(1,3;\delta_N/\delta_n)\in\mathbb{Q}[\zeta(4n+3)\,|\,n\ge 0]$.
\end{cor}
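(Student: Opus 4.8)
The plan is to derive the corollary by direct substitution into Theorem~\ref{thm:deltaNdeltan}, followed by an extraction of the powers of $4$. Specializing $(a,b)=(1,3)$ there, the two parity cases express $\zeta(1,3;\delta_N/\delta_n)$ as a Hankel determinant whose entries are $B_{1,3}(i+j+n-1)$ (when $N\equiv n$) or $B_{1,3}(i+j-n-2)$ (when $N\not\equiv n$). I would then insert the explicit evaluation $B_{1,3}(k)=\tfrac{1}{4^k}\zeta(4k+3)$ from \eqref{for:B13}, valid for $k\ge 0$, recalling the convention $B(k)=0$ for $k<0$.

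First, in the case $N\equiv n\pmod 2$ with $m=\tfrac{N-n}{2}$, each entry becomes $B_{1,3}(i+j+n-1)=4^{-(n-1)}4^{-(i+j)}\zeta(4(i+j+n)-1)$. Pulling the global scalar $4^{-(n-1)}$ out of each of the $m$ rows and then applying the identity $\det(c^{i+j}a_{i,j})_{1\le i,j\le m}=c^{m(m+1)}\det(a_{i,j})_{1\le i,j\le m}$ (the same one already used in the proof of Theorem~\ref{thm:deltaNdeltan}) with $c=4^{-1}$, the total power of $4$ comes out to $-m(n-1)-m(m+1)=-m(m+n)=-\tfrac14(N-n)(N+n)$, which matches the claimed prefactor. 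In the case $N\not\equiv n\pmod 2$ with $m=\tfrac{N+n+1}{2}$, writing $B_{1,3}(i+j-n-2)=4^{\,n+2}4^{-(i+j)}\zeta(4(i+j-n)-5)$ and performing the same two extractions gives the exponent $m(n+2)-m(m+1)=m(n+1-m)=-\tfrac14(N+n+1)(N-n-1)$, again as claimed; the sign $(-1)^{\frac12 n(n+1)}$ is carried over unchanged from Theorem~\ref{thm:deltaNdeltan}.

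The one point requiring care is the consistency of the two zero-conventions: for $i+j-n-2<0$ one has $B_{1,3}(i+j-n-2)=0$, and I would check that the corresponding argument is then negative, namely $4(i+j-n)-5=4(i+j-n-2)+3\le -1$, so that $\zeta(4(i+j-n)-5)=0$ under the stated convention $\zeta(m)=0$ for $m<0$; this makes the substitution $B_{1,3}(i+j-n-2)=4^{-(i+j-n-2)}\zeta(4(i+j-n)-5)$ valid entry by entry even when the entry vanishes (the spurious power of $4$ multiplies $0$). Finally, the membership $\zeta(1,3;\delta_N/\delta_n)\in\mathbb{Q}[\zeta(4k+3)\mid k\ge 0]$ is immediate, since each determinant is an integer polynomial in its entries and every nonzero entry $\zeta(4(i+j+n)-1)=\zeta(4(i+j+n-1)+3)$, respectively $\zeta(4(i+j-n)-5)=\zeta(4(i+j-n-2)+3)$, is an odd single zeta value of the form $\zeta(4k+3)$. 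There is no genuine obstacle beyond this bookkeeping of exponents and the matching of conventions.
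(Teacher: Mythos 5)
Your proposal is correct and follows exactly the route the paper takes: the paper deduces Corollary~\ref{cor:31case} by substituting the evaluation $B_{1,3}(k)=4^{-k}\zeta(4k+3)$ from \eqref{for:B13} into the Hankel determinants of Theorem~\ref{thm:deltaNdeltan} and extracting the powers of $4$ via $\det(c^{i+j}a_{i,j})=c^{m(m+1)}\det(a_{i,j})$. Your bookkeeping of the exponents in both parity cases, and your check that the conventions $B(k)=0$ for $k<0$ and $\zeta(m)=0$ for $m<0$ match entrywise, are all accurate.
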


\begin{example}
 When $N=5$, we have 
\begin{align*}
\ytableausetup{mathmode,boxsize=1.2em,aligntableaux=center} 
\zeta\left(
\ 
{\footnotesize\begin{ytableau}
 *(gray) 3 & 1 & *(gray) 3 & 1 & *(gray) 3 \\
 1 & *(gray) 3 & 1 & *(gray) 3 \\
 *(gray) 3 & 1 & *(gray) 3 \\
 1 & *(gray) 3 \\
 *(gray) 3
\end{ytableau}}
\
\right)
\,&=\,
\frac{1}{4^6}
\begin{vmatrix}
 \zeta(3) & \zeta(7) & \zeta(11) \\
 \zeta(7) & \zeta(11) & \zeta(15) \\
 \zeta(11) & \zeta(15) & \zeta(19) 
\end{vmatrix}
, & 
\zeta\left(
\ 
{\footnotesize\begin{ytableau}
 \none & 1 & *(gray) 3 & 1 & *(gray) 3 \\
 1 & *(gray) 3 & 1 & *(gray) 3 \\
 *(gray) 3 & 1 & *(gray) 3 \\
 1 & *(gray) 3 \\
 *(gray) 3
\end{ytableau}}
\
\right)
\,&=\,
\frac{1}{4^6}
\begin{vmatrix}
 \zeta(11) & \zeta(15) \\
 \zeta(15) & \zeta(19) 
\end{vmatrix}
,\\[5pt]
\zeta\left(
\ {\footnotesize
\begin{ytableau}
 \none & \none & *(gray) 3 & 1 & *(gray) 3 \\
 \none & *(gray) 3 & 1 & *(gray) 3 \\
 *(gray) 3 & 1 & *(gray) 3 \\
 1 & *(gray) 3 \\
 *(gray) 3
\end{ytableau}}
\
\right)
\,&=\,
-\frac{1}{4^4}
\begin{vmatrix}
 0 & 0 & \zeta(3) & \zeta(7) \\
 0 & \zeta(3) & \zeta(7) & \zeta(11) \\
 \zeta(3) & \zeta(7) & \zeta(11) & \zeta(15) \\
 \zeta(7) & \zeta(11) & \zeta(15) & \zeta(19) 
\end{vmatrix}
, & 
\zeta\left(
\ {\footnotesize
\begin{ytableau}
 \none & \none & \none & 1 & *(gray) 3 \\
 \none & \none & 1 & *(gray) 3 \\
 \none & 1 & *(gray) 3 \\
 1 & *(gray) 3 \\
 *(gray) 3
\end{ytableau}}
\
\right)
\,&=\,
\frac{1}{4^4}
\begin{vmatrix}
 \zeta(19) 
\end{vmatrix}
 \,.
\end{align*} 
\end{example}

\section{Other shapes, observations and discussion}

We end this note by discussing the other shapes which were not discussed above and also some numerical observations and possible further directions.\\

{\bf Squares:} Of course speaking about Checkerboard style Schur multiple zeta values, the square shapes comes into mind. Though the authors could not find any nice formulas for these shapes. One can check that 
\begin{align}\label{eq:2x2square}
\zeta\left(\ {\footnotesize
				\begin{ytableau}
				*(gray)3 & 1 \\
				1 & *(gray)3 
				\end{ytableau}}\ \right) 
				&	= \frac{1}{2} \zeta(3) \zeta(5) - \frac{5}{16} \zeta(4)^2\,,
\end{align}
but the $3 \times 3$ case seems not to be a polynomial in single zeta values anymore. To deal with the square case one possibility is again to use the Jacobi-Trudi formula to get for example for $a,b \geq 2$
\begin{align*}
\zeta\left(\ {\footnotesize
				\begin{ytableau}
				*(gray)b & a & *(gray)b\\
				a & *(gray)b & a \\
				*(gray)b & a & *(gray)b
				\end{ytableau}}\ \right) 
				&	=  \zeta(b,a,b,a,b) \zeta\left(\ {\footnotesize
								\begin{ytableau}
								*(gray)b & a \\
								a & *(gray)b 
								\end{ytableau}}\ \right) - \zeta(b,a,b,a) \zeta\left(\ {\footnotesize
																\begin{ytableau}
																*(gray)b & a & *(gray)b \\
																a & *(gray)b 
																\end{ytableau}}\ \right) + \zeta(b,a,b) \zeta\left(\ {\footnotesize
																																\begin{ytableau}
				*(gray)b & a & *(gray)b \\
				a & *(gray)b & a\end{ytableau}}\ \right)\,.
\end{align*}
This works for the arbitrary $n \times n$-case and therefore it seems to be necessary to investigate the Checkerboard style Schur multiple zeta values of shape $\lambda = (n,\dots,n,n-1,\dots,n-1)$. \\

{\bf Anti-stairs:} We saw that $\zeta\left(\ {\footnotesize \begin{ytableau}
	\none & 1  \\
	1 & *(gray)3
\end{ytableau}}\ \right) 
= \frac{1}{2} 
\zeta(5) $ and one can also show that
\begin{align*}
	\zeta\left(\ {\footnotesize
					\begin{ytableau}
					\none & \none & *(gray)3 \\
					\none & *(gray)3 & 1  \\
					*(gray)3 & 1  & *(gray)3 
					\end{ytableau}}\ \right) 
										&= \frac{1}{16} \zeta(5) \zeta(9) -  \frac{1}{16} \zeta(3)^2 \zeta(8) + \frac{1}{2} \zeta(3)^3 \zeta(5) + \frac{3}{8} \zeta(3) \zeta(4) \zeta(7) - \frac{85}{192} \zeta(5)^2 \zeta(4)\,.
\end{align*}
This can be done by using a regularized version of the Jacobi-Trudi formulas. But for higher weight cases it is not clear how to deal with these "anti-stair" type of Schur multiple zeta values.\\

{\bf Gluings:} Equation \eqref{eq:2x2square} can also be written as 
\[ 
\zeta\left(\ {\footnotesize \begin{ytableau}
	*(gray)3 
\end{ytableau}}\ \right) \cdot 
\zeta\left(\ {\footnotesize \begin{ytableau}
	\none & 1  \\
	1 & *(gray)3
\end{ytableau}}\ \right)  
- \zeta\left(\ {\footnotesize	\begin{ytableau}
	*(gray)3 & 1 \\
	1 & *(gray)3 
\end{ytableau}}\ \right) 
 = 70\, \zeta(\{1,3\}^2) \,.\]
Taking the product of these types of Schur multiple zeta values and substract their "glued"-version seems to give multiple of $\zeta(\{1,3\}^n)$ also in higher weight cases. From numerical experiments we expect the following
\begin{align}
\zeta\left(\ {\footnotesize \begin{ytableau}
	1 & *(gray)3\\
	*(gray)3
\end{ytableau}}\ \right) \cdot 
\zeta\left(\ {\footnotesize \begin{ytableau}
\none & \none & 1 \\
\none & 1 & *(gray)3 \\
	1 & *(gray)3 
\end{ytableau}}\ \right)  
- \zeta\left(\ {\footnotesize	\begin{ytableau}
1 & *(gray)3 & 1 \\
*(gray)3 & 1 & *(gray)3 \\
	1 & *(gray)3 
\end{ytableau}}\ \right)&\overset{?}{=} 1074502\, \zeta(\{1,3\}^4)\,, \\ 
\zeta\left(\ {\footnotesize \begin{ytableau}
\none & 1 & *(gray)3\\
1 & *(gray)3 \\
*(gray)3
\end{ytableau}}\ \right) \cdot 
\zeta\left(\ {\footnotesize \begin{ytableau}
\none & \none & \none & 1\\
\none & \none & 1 & *(gray)3\\
\none & 1 & *(gray)3\\
1 & *(gray)3
\end{ytableau}}\ \right)  
- \zeta\left(\ {\footnotesize	\begin{ytableau}
\none & 1 &  *(gray)3 & 1\\
1 &  *(gray)3 & 1 & *(gray)3\\
 *(gray)3 & 1 & *(gray)3\\
1 & *(gray)3
\end{ytableau}}\ \right)&\overset{?}{=} \frac{9656199193420}{21}\, \zeta(\{1,3\}^6)\,,
\end{align}
and similarly for the weight $32$ case, in which the product subtracted by the "glued"-version is expected to be 
$2222659435447178310\,\zeta(\{1,3\}^8)$. So one might expect that for $n \geq 1$ there exists $\alpha_n \in \Q$ with $B_{1,3}(n-1) \cdot A_{1,3}(n)  - \zeta(1,3; (n+1,n+1,n,n-1,\dots,3,2)/\delta_{n-2}) \overset{?}{=} \alpha_n \zeta(\{1,3\}^{2n})$.\\

{\bf Interpolated versions:} Another possible direction is to study formulas for Checkerboard style interpolated Schur multiple zeta values. Interpolated Schur multiple zeta values were introduced in \cite{Bachmann}. They are elements $\zeta^t(\kk) \in \R[t]$, which interpolate between a Schur multiple zeta value of a Young tableau $\kk$ and its conjugate $\kk'$, i.e. $\zeta^0(\kk) = \zeta(\kk)$ and $\zeta^1(\kk) =  \zeta(\kk')$.
In the case $\lambda=(2,1)$ we have for $a\geq 1, b,c \geq 2$
\begin{align*}	\zeta^t \left(\ {\footnotesize \begin{ytableau}
	a & b  \\
	c
	\end{ytableau}}\ \right) &= \zeta(a,b,c) +\zeta(a,c,b) + \zeta(a+b,c)+\zeta(a,b+c) \\
	&+ \big( \zeta(a+c,b) - \zeta(a+b,c) + \zeta(a+b+c) \big) \cdot t - \zeta(a+b+c) \cdot  t^2  \,.
\end{align*}
As an analogue of $B_{1,3}(1)=\zeta\left(\ {\footnotesize \begin{ytableau}
	1 & *(gray)3  \\
	*(gray)3
	\end{ytableau}}\ \right) 
		= \frac{1}{4} 
			\zeta(7) $, which is a special case of Corollary \ref{cor:31case},
we get
\[ \zeta^t\left(\ {\footnotesize \begin{ytableau}
	1 & *(gray)3  \\
	*(gray)3
	\end{ytableau}}\ \right) 
		= \left( \frac{1}{4} + t (1-t) \right)  \zeta(7)\,.\]
 Furthermore we get as an analogue of $A_{1,3}(2)=\frac{1}{8}\zeta(9)$ 
\[\zeta^t\left(\ {\footnotesize \begin{ytableau}
\none & \none & 1 \\
\none & 1 & *(gray)3\\
1 & *(gray)3
\end{ytableau}}\ \right)  = \frac{1}{8} \zeta(9) +\left(\frac{15}{2} \zeta(5) \zeta(4) - \frac{31}{4} \zeta(9)\right) t (1-t) + \zeta(9) t^2 (1-t)^2 \,.\]
Since in \cite{Bachmann} a Jacobi-Trudi formula for $\zeta^t(\kk)$ is proven,  one could try to give an analogue of Corollary \ref{cor:31case} for the interpolated case. To give a formula for the general case one would need explicit evaluations of the interpolated multiple zeta values $\zeta^t(\{1,3\}^n)$ and $\zeta^t(3,\{1,3\}^n)$.\\

{\bf Other special cases for $a$ and $b$:}  A natural question is whether there are other $a,b$, such that the $A_{a,b}(n)$ or $B_{a,b}(n)$ have a nice explicit form. In the case $(a,b) = (1,2)$ it is well-known that due to duality we have $\zeta(\{1,2\}^n) = \zeta(\{3\}^n)$. From this one can also deduce, together with Lemma \ref{lem:ABrec} and \ref{lem:SSstargenseries}, that for all $n\geq 1$
\begin{align*}
\label{for:A13}
\ytableausetup{mathmode,boxsize=1.2em,aligntableaux=center} 
 A_{1,2}(n)
=\zeta
\left(
\ {\footnotesize
\begin{ytableau}
 \none & \none    & \none  & 1        \\
 \none & \none    & \adots & *(gray)2 \\
 \none & 1        & \adots \\
 1     & *(gray)2  
\end{ytableau}}
\ \right)
=3 \zeta(3n+1)\,\quad\text{ and }\quad
S_{1,2}(n)=\zeta\left(
\ {\footnotesize
\begin{ytableau}
 \none  & \none & 1 \\
 \none  & \adots& *(gray) 2 \\
 1 & \adots \\
 *(gray) 2
\end{ytableau}}
\
\right) = \zeta^\star(\{3\}^n) \,.
 \end{align*} 
Though in contrast to the $(a,b)=(1,3)$ case, it seems that $B_{1,2}(n)$ is not a rational multiple of $\zeta(3n+2)$.

In the case $a=b=2k$ with $k\geq 1$, it is clear that all $A_{2k,2k}(n), B_{2k,2k}(n)$ and $S_{2k,2k}(n)$ are some rational multiples of even powers of $\pi$. 
For other cases with $a\neq b$ there are no explicit nice evaluations available for the primitive ribbon cases. In the case $(a,b)=(1,5)$ there are partial results to evaluate $\zeta(\{1,5\}^n))$ in \cite{Yee}.\\ 

{\bf Other proofs:} It would be interesting to know if the  simple explicit sum representation of the odd single zeta values in Theorem \ref{thm:AB13} also have a more elementary proof. 
In \cite{Monien} and \cite{HaynesZudilin2015} the authors also studied Hankel determinants of single zeta values. It would be interesting to know if the explicit calculation of the Hankel determinant, as it was done in \cite{Monien}, can also be used to prove our identities in Theorem \ref{cor:31case} directly without using the Jacobi-Trudi formulas. 

\section*{Acknowledgement}
The authors would like to thank Don Zagier for his ideas on parts of the proof of Theorem \ref{thm:AB13}, Wadim Zudilin for helpful discussion on the topic and the Max-Planck-Institut f\"ur Mathematik in Bonn for hospitality and support.


\noindent
\textsc{Henrik Bachmann}\\
Graduate School of Mathematics, \\
 Nagoya University, Nagoya, Japan \\
 \texttt{henrik.bachmann@math.nagoya-u.ac.jp }\\
 
\noindent
\textsc{Yoshinori Yamasaki}\\
 Graduate School of Science and Engineering, \\
 Ehime University, Ehime, Japan \\
 \texttt{yamasaki@math.sci.ehime-u.ac.jp}

\end{document}